\newenvironment{proof}{\vspace{1ex}\noindent{\it Proof.}\hspace{0.5em}}
	{\hfill\qed\vspace{1ex}}
\theoremstyle{definition}
\newtheorem{theorem}{Theorem}[section]
\newtheorem{lemma}[theorem]{Lemma}
\newtheorem{proposition}[theorem]{Proposition}
\newtheorem{corollary}[theorem]{Corollary}
\newtheorem{definition}[theorem]{Definition}
\DeclareMathOperator{\Gal}{\operatorname{Gal}}
\DeclareMathOperator{\Q}{\mathbf{Q}}
\DeclareMathOperator{\R}{\mathrm{R}}
\DeclareMathOperator{\Z}{\mathbf{Z}}
\DeclareMathOperator{\A}{\mathbf{A}}
\DeclareMathOperator{\N}{\mathbf{N}}
\DeclareMathOperator{\im}{\mathrm{im}}
\DeclareMathOperator{\Spec}{\operatorname{Spec}}
\DeclareMathOperator{\ord}{\operatorname{ord}}
\DeclareMathOperator{\Lie}{\mathrm{Lie}}
\DeclareMathOperator{\Id}{\mathrm{Id}}
\DeclareMathOperator{\Og}{\mathcal{O}}
\DeclareMathOperator{\Pic}{\mathrm{Pic}}
\DeclareMathOperator{\et}{\acute{\mathrm{e}}{\mathrm{t}}}
\DeclareMathOperator{\Gm}{\mathbf{G}_m}
\DeclareMathOperator{\fppf}{\mathrm{fppf}}
\DeclareMathOperator{\coker}{\mathrm{coker}}
\DeclareMathOperator{\Res}{\mathrm{Res}}
\title{Jumps and motivic invariants of semiabelian Jacobians}
\author{Otto Overkamp}
\date{}
\begin{document}
\maketitle

\begin{abstract} 
We investigate Néron models of Jacobians of singular curves over strictly Henselian discretely valued fields, and their behaviour under tame base change. For a semiabelian variety, this behaviour is governed by a finite sequence of (a priori) real numbers between 0 and 1, called \it jumps. \rm The jumps are conjectured to be rational, which is known in some cases. The purpose of this paper is to prove this conjecture in the case where the semiabelian variety is the Jacobian of a geometrically integral curve with a push-out singularity. Along the way, we prove the conjecture for algebraic tori which are induced along finite separable extensions, and generalize Raynaud's description of the identity component of the Néron model of the Jacobian of a smooth curve (in terms of the Picard functor of a proper, flat, and regular model) to our situation. The main technical result of this paper is that the exact sequence which decomposes the Jacobian of one of our singular curves into its toric and Abelian parts extends to an exact sequence of Néron models. Previously, only split semiabelian varieties were known to have this property.
\end{abstract}

\tableofcontents
\section{Introduction}
Let $K$ be a strictly Henselian discrete valuation field with ring of integers $\Og_K$. Assume that $\Og_K$ is complete with respect to the $\mathfrak{m}$-adic topology, where $\mathfrak{m}\subseteq \Og_K$ denotes the maximal ideal. Denote its residue field by $k$, which we assume to be of characteristic $p>0$. Let $B$ be a semiabelian variety over $K$ with a Chevalley decomposition
$$0\to T\to B\to A\to 0.$$ That is, $T$ is an algebraic torus and $A$ an Abelian variety over $K$. Denote the Néron (lft-)models of $T$, $B$, and $A$ by $\mathscr{T}$, $\mathscr{B}$, and $\mathscr{A}$, respectively. If $L$ is a finite extension of $K$, and $\mathscr{B}_L$ denotes the Néron (lft-)model of $B\times_K\Spec L$, we obtain a canonical morphism
$$\mathscr{B}\times_{\Og_K}\Spec\Og_L\to\mathscr{B}_L.$$ In general, this morphism is not an isomorphism, and the cokernel of the induced map
$$\psi_L\colon \Lie(\mathscr{B})\otimes_{\Og_K}\Og_L\to\Lie(\mathscr{B}_L)$$ contains important information about $B$. (For an introduction to the theory of Lie algebras of group schemes, see \cite{LLR}, Chapter 1). One approach to studying the behaviour of the Néron model of $B$ under tame base change is to consider all tame extensions $K(d)$ of $K$ for positive integers $d$ prime to $p$. For each such $d$, we have
$$\coker\psi_{K(d)}=\bigoplus_{i=1}^g \Og_{K(d)}/\mathfrak{m}_d^{j_{d,i}},$$ with $j_{d,1}\leq j_{d,2}\leq ... \leq j_{d,g},$ where $\mathfrak{m}_d\subseteq \Og_{K(d)}$ denotes the maximal ideal and $g:=\dim B$. Now suppose we are given a tower $K\subseteq K(d_1)\subseteq K(d_2)\subseteq...$ of tame extensions of $K$ which is cofinal in the set of all finite tame extensions of $K$. As it turns out, the limits $$j_i:=\lim_{n\to\infty} \frac{j_{{d_n},i}}{d_n}$$ exist, and are independent of the tower of extensions chosen. The numbers $j_1,..., j_g$ are referred to as the \it jumps \rm of $B$, because of a close link between those numbers and a filtration on the special fibre of $\mathscr{B}$ defined by Edixhoven \cite{E}. For the purposes of this paper, the description above may serve as a definition. Note that, \it a priori, \rm the jumps of $B$ are real numbers. If the semiabelian variety $B$ is tamely ramified (that is, if $B$ acquires semistable reduction after a finite tame extension $K'$ of $K$), then the jumps are rational numbers, we have $[K':K]j_i\in\Z$ for all $i$, and the integer $[K':K]$ is minimal with this property, provided that $K'$ is minimal. On the other hand, if $B$ acquires semistable reduction only after a wild extension $L$ of $K$, much less is known. However, if $B$ is the Jacobian of a smooth proper curve $C$ over $K$ of index one, the jumps still turn out to be rational (see \cite{HN}, Corollary 6.3.1.5). We will begin, in the second chapter, by proving that the jumps of algebraic tori which are induced along a finite separable extension $L/K$ are rational, and we will study other motivic invariants associated to such algebraic tori (see Corollary \ref{inducedjumpscorollary} and Theorem \ref{inducedtorizetatheorem}). Note that we make no further assumptions about the finite extension $L/K$; in particular, we do not assume that this extension is tame. These results are complementary to those in \cite{N}, where the tameness assumption is crucial. Our calculations allow us to answer a question of Halle and Nicaise about the denominators of the jumps of algebraic tori asked in the last chapter of \cite{HN}; see the remark after the proof of Corollary \ref{inducedjumpscorollary}.
We then go on to consider semiabelian varieties over $K$ of the form $\Pic^0_{C/K}$ for a proper, geometrically integral curve $C$ over $K$ which has a \it push-out singularity \rm (see Chapter 5). Such curves are interesting for a number of reasons: Firstly, they form a wide class of singular curves with the property that the toric parts of their Jacobians can be explicitly calculated by hand. Secondly, we shall prove that every curve in this class admits a well-behaved proper model over $\Og_K$ whose construction, beginning from a regular projective model of the curve's normalization, involves only simple operations (blow-ups in regular centres and push-outs). Hence we obtain a wide class of proper non-regular schemes over $\Og_K$ whose Picard functor can nevertheless be explicitly studied.  
The main technical result we shall prove is that if $B$ is the Jacobian of a geometrically integral curve with a push-out singularity over $K$, then the sequence
$$0\to\mathscr{T}\to\mathscr{B}\to\mathscr{A}\to 0$$ of Néron models is exact (Theorem \ref{Neronexacttheorem}). Our method generalizes the approach to Néron models of Jacobians via Picard functors from \cite{BLR}, but requires new ideas to deal with the fact that the models we shall construct are not normal. In order to study the Picard functor of a model of a curve with a push-out singularity, we prove general results about the behavior of push-outs of schemes under base change. Our method may be seen as a partial generalization of the classical description of the Jacobian of a singular curve (in terms of the Jacobian of its normalization and the nature of its singularities) to a relative setting. This provides a new general family of semiabelian varieties whose Chevalley decomposition extends to an exact sequence of Néron models; the only other known class with this property is that of semiabelian varieties with split toric part (compare \cite{CY}, Lemma 11.2). Hence these decomposition results might already be of interest in their own right. Our method of studying the exactness of sequences of Néron models is new, and was therefore not exploited in \cite{HN}. We will see that the toric parts of semiabelian varieties of the form $\Pic^0_{C/K}$ (with $C$ as before) are closely related to induced algebraic tori, and the technical result mentioned above will allow us to use known results about the Jumps (and other motivic invariants) of Jacobians of smooth curves to deduce new theorems in the case of semiabelian Jacobians. Again, there are no tameness assumptions. In the last chapter, we will study motivic Zeta functions associated to Jacobians of curves with push-out singularities. We will show that the motivic zeta function $Z_B(z)$ associated to a semiabelian Jacobian is rational, and that the unique pole of $Z_B(\mathbf{L}^{-s})$ only depends on the Abelian part of $B$ (Theorem \ref{motivicrationaltheorem}). One expects this to be true in general, and our results seem to provide the first class of semiabelian varieties (which are neither tori nor Abelian varieties) beyond the tamely ramified case for which this is known to be true (see the remark at the end of this paper). Throughout the manuscript, we will use the following notation:
\begin{tabbing}
$K$: \hspace{0.2in}\= A strictly Henselian complete discrete valuation field\\
$\Og_F$: \>The ring of integers of a finite extension $F$ of $K$\\
$k$: \>The residue field of $K$\\
$p$: \> The characteristic of $k$\\
$K(d)$:\> The unique finite extension of degree $d$ of $K$ for $p\nmid d$\\
$L$: \> A finite separable extension of $K$\\
$T$:\> The torus $(\Res_{L/K}\Gm)/\Gm$\\
$C$:\> A proper, geometrically integral curve over $K$ with precisely one push-out singularity
\end{tabbing}
\section{Jumps and motivic invariants of certain algebraic tori}
In this section, we will consider the questions already alluded to in the introduction for algebraic tori of the form $\Res_{L/K}\Gm$, and $T:=(\Res_{L/K}\Gm)/\Gm$, where $L/K$ is a finite separable extension. These tori are of particular interest to us because toric parts of semiabelian Jacobians are very closely related to such tori, as we shall see later.  We will, however, introduce some concepts in this chapter in slightly greater generality for later use. Throughout this paper, when referring to \it Néron lft-modles \rm we mean models which satisfy the Néron mapping property but which are only required to be locally of finite type, rather than of finite type, as is the case with classical Néron models. See \cite{BLR}, Chapter 10, for more details. 
\begin{definition} \label{elementarydefinition}
Let $B$ be a semibelian variety over $K$. For each positive integer $d$ prime to $p$, denote by $\mathscr{B}(d)$ the Néron lft-model of $B\times_K\Spec K(d)$ over $\Og_{K(d)}.$ We have a canonical morphism
$$\psi_{d}\colon \Lie(\mathscr{B})\otimes_{\Og_K}\Og_{K(d)}\to \Lie \mathscr{B}(d).$$ We can find nonnegative integers $j_{d,1}\leq ... \leq j_{d,g}$, where $g:=\dim B$, such that $$\coker\psi_{d}\cong \bigoplus_{i=1}^g \Og_{K(d)} /\mathfrak{m}_d^{j_{d,i}}.$$\\
(i) The numbers $j_{d,1}\leq ... \leq j_{d,g}$ are called the \it $d$-jumps \rm of $B$.\\
(ii) Choose a tower $K\subseteq K(d_1)\subseteq K(d_2)\subseteq...$ of finite tame extensions of $K$ which is cofinal in the set of all finite tame extensions of $K$. For each $i=1,...,g$, let 
$$j_i:=\lim_{n\to\infty}\frac{j_{{d_n},i}}{d_n}.$$ This limit exists by \cite{HN}, (6.1.3.7). The numbers $j_1\leq ... \leq j_g$ are called the \it jumps \rm of $B$.\\
(iii) For each $d$ prime to $p$, define
$$\ord_B(d):=\sum_{i=1}^g j_{d,i}=\mathrm{length}_{\Og_K(d)}(\coker\psi_{d}).$$ The resulting function is called the \it order function of \rm $B$.\\
(iv) Define the \it tame base change conductor \rm $c_{tame}(B)$ of $B$ to be 
$$c_{tame}(B):=\sum_{i=1}^g j_i.$$
\end{definition}
In order to state the next definition, recall the Grothendieck ring of varieties over $k,$ denoted by $K_0(\mathrm{Var}_k)$. For a (reduced) algebraic variety $X$ over $k$, denote by $[X]$ the class of $X$ in $K_0(\mathrm{Var}_k)$. For more details about this ring, see \cite{Looi}, p. 269, or \cite{NS}. As usual, put
$$\mathbf{L}:=[\mathbf{A}^1_k]\in K_0(\mathrm{Var}_k).$$ Also recall that, for each $d$ prime to $p$, the Néron lft-model $\mathscr{B}(d)$ has a maximal quasi-compact open subgroup scheme, denoted by $\mathscr{B}(d)^{\mathrm{qc}}.$
\begin{definition} Define the \it motivic Zeta function of \rm $B$ as
$$Z_B(z):=\sum_{p\nmid d} [\mathscr{B}(d)_k^{\mathrm{qc}}]\mathbf{L}^{\ord_B(d)}z^d\in K_0(\mathrm{Var_k})[\![z]\!].$$
\end{definition}
One of the objectives of this paper is to prove that the motivic Zeta function of a semiabelian Jacobian is a rational function. We follow the strategy already set out in \cite{HN2}. For this it is necessary to understand the behaviour of the order function, and of the classes $[\mathscr{B}(d)_k^{\mathrm{qc}}]$, for $d$ prime to $p$. We shall do this in this chapter for algebraic tori which are induced along a finite separable extension. In particular, it will follow that the jumps of induced algebraic tori are rational numbers. Similar results are only known in the tamely ramified case, and beyond the cases of tamely ramified or induced tori the question of rationality of jumps or motivic Zeta functions remains wide open. 
\subsection{Jumps and Galois actions}
Let $B$ be a semiabelian variety over $K$ with Néron lft-model $\mathscr{B}$ over $\Og_K$. For each $d$ prime to $p$, put $B(d):=B\times_K\Spec K(d),$ and let $\mathscr{B}(d)$ be the Néron lft-model of $B(d)$. For all $d$ prime to $p$, let $\boldsymbol{\mu}_d$ denote the group of $d$-th roots of unity in $\Og_K$. Since $\Og_K$ is strictly Henselian, there is a natural isomorphism $\boldsymbol{\mu}_d\cong \Gal(K(d)/K)$, and we obtain a right action of $\boldsymbol{\mu}_d$ on $B(d)$, and hence on $\mathscr{B}(d)$, which is compatible with the right action of $\boldsymbol{\mu}_d$ on $\Spec\Og_{K(d)}.$ Now, the right action of $\boldsymbol{\mu}_d$ on $\mathscr{B}(d)$ induces a right action of $\boldsymbol{\mu}_d$ on the Abelian group $\Lie \mathscr{B}(d)$. This is not in general an action on the $\Og_{K(d)}$-module $\Lie \mathscr{B}(d)$. However, the right $\boldsymbol{\mu}_d$-action on $\Lie\mathscr{B}(d)$ and the left $\boldsymbol{\mu}_d$-action on $\Og_{K(d)}$ are compatible in the sense that for $\lambda\in \Og_{K(d)}$, $x\in \Lie\mathscr{B}(d)$, and $\zeta\in \boldsymbol{\mu}_d$, we have
$$(\lambda x)\ast\zeta=(\zeta^{-1}\cdot \lambda)(x\ast\zeta).$$ Since the maximal ideal $\mathfrak{m}_d\subseteq \Og_{K(d)}$ is invariant under the left action of $\boldsymbol{\mu}_d$, the right action of $\boldsymbol{\mu}_d$ on $\Lie\mathscr{B}(d)$ induces a right action on the $k$-vector space $(\Lie\mathscr{B}(d))\otimes_{\Og_{K(d)}} k$ (i.e. this action is $k$-linear). In what follows, we will prove two lemmata which allow us to use this Galois action to deduce new information about the jumps of $B$. Let us begin with the following well-known
\begin{lemma}
Keep the notation from the beginning of this subsection, and let $j_{d,1},..., j_{d,g}$ ($g=\dim B$) be the $d$-jumps of $B$. Furthermore, denote by $\chi_d$ the one-dimensional right $\boldsymbol{\mu}_d$-representation over $k$ defined by $y\ast \zeta:=\zeta y$ for $y\in k$. Then $j_{d,i}\leq d-1$ for all $1\leq i\leq g$, and there is an isomorphism
$$(\Lie\mathscr{B}(d))\otimes_{\Og_K(d)} k \cong \bigoplus_{i=1}^d \chi_d^{\otimes j_{d,i}}$$ of right $k[\boldsymbol{\mu}_d]$-modules. In particular, the $d$-jumps of $B$ only depend upon the $k[\boldsymbol{\mu}_d]$-module $(\Lie\mathscr{B}(d))\otimes_{\Og_{K(d)}}~k.$
\end{lemma}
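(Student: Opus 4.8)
The plan is to reduce the statement to semilinear algebra over $\Og_{K(d)}$ and then to invoke the structure of N\'eron models under tame base change. Write $R:=\Og_{K(d)}$, fix uniformisers $\varpi\in\Og_K$ and $\varpi_d\in R$ with $\varpi_d^{\,d}=\varpi$, and put $M:=\Lie\mathscr{B}(d)$, $L:=\Lie(\mathscr{B})\otimes_{\Og_K}R$. Both are free of rank $g$ over $R$; $M$ carries the semilinear right $\boldsymbol{\mu}_d$-action described above, and $L$ carries the semilinear action coming from the factor $R$ alone. Since $\psi_d$ becomes, after $-\otimes_R K(d)$, the canonical isomorphism $\Lie(B)\otimes_K K(d)\xrightarrow{\sim}\Lie(B\otimes_K K(d))$ and $L$ is $R$-torsion-free, $\psi_d$ is injective, and we obtain an exact sequence $0\to L\xrightarrow{\psi_d}M\to\coker\psi_d\to0$ of $R$-modules with semilinear $\boldsymbol{\mu}_d$-action, where $\coker\psi_d\cong\bigoplus_{i=1}^{g}R/\mathfrak{m}_d^{\,j_{d,i}}$ as an $R$-module. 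Reducing modulo $\mathfrak{m}_d$ turns the claim into a statement about $V:=M\otimes_R k$.

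Because $\boldsymbol{\mu}_d\subseteq\Og_K\subseteq R$ and $p\nmid d$, the operators $e_m:=\tfrac1d\sum_{\zeta\in\boldsymbol{\mu}_d}\zeta^{-m}(-\ast\zeta)$ are $\Og_K$-linear idempotents, so every finite free $R$-module with semilinear $\boldsymbol{\mu}_d$-action splits, as an $\Og_K$-module, into weight spaces $\bigoplus_{m\in\mathbf{Z}/d}(\,\cdot\,)_{(m)}$, each $\Og_K$-free of rank $g$, on which multiplication by $\varpi_d$ raises the weight by one. For $L$ one has $L_{(m)}=\varpi_d^{\,m}L_{(0)}$ with $L_{(0)}=\Lie\mathscr{B}$. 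The key geometric input, which I would take from \cite{E} (cf. \cite{HN}), is that $\psi_d$ identifies $\Lie\mathscr{B}$ with the semilinear invariants $(\Lie\mathscr{B}(d))^{\boldsymbol{\mu}_d}=M_{(0)}$, equivalently $(\coker\psi_d)_{(0)}=0$. Running around the cycle $M_{(0)}\xrightarrow{\varpi_d}M_{(1)}\xrightarrow{\varpi_d}\cdots\xrightarrow{\varpi_d}M_{(d-1)}\xrightarrow{\varpi_d}M_{(0)}$, whose composite is multiplication by $\varpi$, and using that the connecting maps $\varpi_d\colon(\coker\psi_d)_{(m)}\to(\coker\psi_d)_{(m+1)}$ are injective for $m\le d-2$ while the last one vanishes, one gets $\varpi_d^{\,d-1}\cdot\coker\psi_d=0$, i.e. $j_{d,i}\le d-1$ for all $i$.

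Finally, a diagram chase through the exact sequence identifies $V_{(m)}$ with $M_{(m)}/\varpi_dM_{(m-1)}$ and expresses the graded dimensions of $V$ in terms of those of $\coker\psi_d$; tracing how the cyclic summands $R/\mathfrak{m}_d^{\,j_{d,i}}$ sit with respect to the weight grading — which is precisely Edixhoven's comparison of the jumps with his filtration on $\Lie\mathscr{B}_k$, whose graded pieces are the weight spaces of $V$ — one finds that the multiset of weights of $V$ is $\{j_{d,1},\dots,j_{d,g}\}$, so that $V\cong\bigoplus_{i=1}^{g}\chi_d^{\otimes j_{d,i}}$; the final assertion of the lemma follows at once. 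I expect the only real subtlety to be in this last step: pinning down the placement of the cyclic summands of $\coker\psi_d$ relative to the weight grading requires (a version of) Edixhoven's analysis rather than purely formal bookkeeping, and one must also keep the left/right conventions straight so that $\chi_d^{\otimes j_{d,i}}$, and not its inverse, appears.
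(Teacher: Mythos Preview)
Your proposal is correct and is, in substance, exactly what the paper invokes: the paper's proof consists solely of the citation ``This follows from \cite{HN2}, Corollary~4.8 and Theorem~4.10. See also \cite{HN}, 6.1.3.4.'' What you have written is essentially an unpacking of those references --- the semilinear weight decomposition under $\boldsymbol{\mu}_d$, Edixhoven's identification $\Lie\mathscr{B}\cong(\Lie\mathscr{B}(d))^{\boldsymbol{\mu}_d}$, and the resulting match between the $d$-jumps and the weight multiplicities on $(\Lie\mathscr{B}(d))\otimes_{\Og_{K(d)}}k$ --- so there is no genuine difference in approach. Your own caveat is well placed: the passage from the elementary-divisor description of $\coker\psi_d$ to the weight multiplicities of $V$ is precisely the content of Edixhoven's comparison (equivalently \cite{HN2}, Theorem~4.10), and cannot be replaced by pure bookkeeping; once that is granted, the remainder of your argument (including the bound $j_{d,i}\le d-1$ via $\varpi_d^{\,d-1}M\subseteq\psi_d(L)$) goes through.
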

\begin{proof}
This follows from \cite{HN2}, Corollary 4.8 and Theorem 4.10. See also \cite{HN}, 6.1.3.4.
\end{proof}\\
We shall also need
\begin{proposition}
Let $0\to T\to B\to A\to 0$ be an exact sequence of semiabelian varieties over $K$. Suppose that, for some $d$ prime to $p,$ the sequence of Néron lft-models
$$0\to \mathscr{T}(d)\to \mathscr{B}(d)\to \mathscr{A}(d)\to 0$$ is exact. Let $\mathcal{I}_B^d$ be the multiset of $d$-jumps of $B$, and similarly for $T$ and $A$. Then we have
$$\mathcal{I}_B^d=\mathcal{I}_T^d\cup \mathcal{I}^d_A$$ as multisets. \label{djumpsproposition}
\end{proposition}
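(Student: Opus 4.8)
The plan is to deduce the statement about multisets of $d$-jumps from the exactness of the sequence of Néron lft-models by passing to Lie algebras and then reducing modulo $\mathfrak{m}_d$. First I would observe that the functor $\Lie(-)$ applied to the exact sequence $0\to\mathscr{T}(d)\to\mathscr{B}(d)\to\mathscr{A}(d)\to 0$ of smooth group schemes over $\Og_{K(d)}$ yields an exact sequence of free $\Og_{K(d)}$-modules
$$0\to\Lie\mathscr{T}(d)\to\Lie\mathscr{B}(d)\to\Lie\mathscr{A}(d)\to 0;$$
here exactness on the left and in the middle is automatic, and exactness on the right follows because the quotient map $\mathscr{B}(d)\to\mathscr{A}(d)$ is smooth (being a surjection of smooth group schemes with smooth kernel), so the induced map on Lie algebras is surjective with locally free kernel. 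Running the same argument over $\Og_K$ with $d=1$ and the canonical map $\mathscr{T}\times\mathscr{A}\times\mathscr{B}$ (using that the Chevalley sequence over $K$ is exact and extends compatibly), I would obtain a commutative diagram with exact rows in which the three vertical arrows are $\psi_d^T$, $\psi_d^B$, $\psi_d^A$ tensored up to $\Og_{K(d)}$.

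The key step is then an application of the snake lemma to this diagram. Since the left-hand map $\Lie(\mathscr{T})\otimes\Og_{K(d)}\to\Lie\mathscr{T}(d)$ is injective and the top row is obtained by base change from an exact sequence of free $\Og_K$-modules (so stays exact and termwise free after $\otimes_{\Og_K}\Og_{K(d)}$), the snake lemma gives an exact sequence
$$0\to\coker\psi_d^{T}\to\coker\psi_d^{B}\to\coker\psi_d^{A}\to 0,$$
provided I also check that the connecting homomorphism vanishes, which it does because the left vertical map $\psi_d^T$ is injective (its kernel, which feeds the connecting map, is zero). Thus $\coker\psi_d^B$ is an extension of $\coker\psi_d^A$ by $\coker\psi_d^T$ as $\Og_{K(d)}$-modules.

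It remains to pass from this short exact sequence of finite-length $\Og_{K(d)}$-modules to the equality of multisets of elementary divisors. Here I would use the Galois-equivariant refinement from the preceding lemma rather than arguing with $\Og_{K(d)}$-module structures directly: the $\boldsymbol{\mu}_d$-action is compatible with all the maps in sight, so reducing modulo $\mathfrak{m}_d$ gives a short exact sequence of $k[\boldsymbol{\mu}_d]$-modules
$$0\to(\Lie\mathscr{T}(d))\otimes_{\Og_{K(d)}}k\to(\Lie\mathscr{B}(d))\otimes_{\Og_{K(d)}}k\to(\Lie\mathscr{A}(d))\otimes_{\Og_{K(d)}}k\to 0;$$
exactness on the left uses that $\Lie\mathscr{T}(d)$ is a direct summand of $\Lie\mathscr{B}(d)$ as an $\Og_{K(d)}$-module, or alternatively follows from the snake-lemma sequence above together with the identification of the reductions with the cokernels twisted by a Tate-type character. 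Since $k[\boldsymbol{\mu}_d]$ is semisimple ($p\nmid d$), this sequence splits, so the multiset of characters $\{\chi_d^{\otimes j_{d,i}^B}\}$ is the disjoint union of $\{\chi_d^{\otimes j_{d,i}^T}\}$ and $\{\chi_d^{\otimes j_{d,i}^A}\}$; since $\chi_d$ has order exactly $d$ and the jump exponents all lie in $[0,d-1]$ by the lemma, the character determines the exponent, and I recover $\mathcal{I}_B^d=\mathcal{I}_T^d\cup\mathcal{I}_A^d$ as multisets.

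The main obstacle is the very first step — establishing exactness of the sequence of Lie algebras, in particular surjectivity on the right and injectivity on the left — since Néron lft-models need not be flat in pathological ways and $\Lie$ is only left exact in general; the point is that all schemes involved are smooth over the base, so $\Lie$ is exact on short exact sequences of smooth group schemes, but this needs to be invoked carefully. After that, the snake lemma and the semisimplicity of $k[\boldsymbol{\mu}_d]$ make the rest essentially formal.
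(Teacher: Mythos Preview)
Your final argument---pass to Lie algebras over $\Og_{K(d)}$, observe the resulting sequence is split exact because $\Lie\mathscr{A}(d)$ is free, reduce modulo $\mathfrak{m}_d$ to get a short exact sequence of $k[\boldsymbol{\mu}_d]$-modules, invoke semisimplicity of $k[\boldsymbol{\mu}_d]$ to split it, and read off the $d$-jumps via the preceding lemma---is exactly the paper's proof.

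The snake-lemma detour through the cokernels $\coker\psi_d^{T,B,A}$ is both unnecessary and, as written, unjustified: you assume the top row comes from an exact sequence of Lie algebras over $\Og_K$, i.e.\ that the N\'eron-model sequence is already exact at $d=1$, but the hypothesis only gives exactness for the fixed $d$ under consideration. Fortunately you never actually use the cokernel sequence (and rightly so: an extension of torsion $\Og_{K(d)}$-modules does not determine elementary divisors, as $0\to\Og/\mathfrak{m}\to\Og/\mathfrak{m}^2\to\Og/\mathfrak{m}\to 0$ shows), so you can simply delete that paragraph and the proof stands.
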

\begin{proof}
First note that the sequence 
$$0\to \mathscr{T}(d)\to \mathscr{B}(d)\to \mathscr{A}(d)\to 0$$ is $\boldsymbol{\mu}_d$-equivariant. This follows immediately from the universal property of the Néron lft-model. Consider the sequence
$$0\to \Lie\mathscr{T}(d)\to \Lie\mathscr{B}(d)\to \Lie\mathscr{A}(d)\to 0$$ of $\Og_{K(d)}$-modules, which is exact by \cite{LLR}, Proposition 1.1(a),(c).  By our previous observation, each morphism in this sequence respects the right $\boldsymbol{\mu}_d$-action. Since $\Lie\mathscr{A}(d)$ is free over $\Og_{K(d)}$, this sequence is split exact, which implies that the sequence
$$0\to (\Lie \mathscr{T}(d))\otimes_{\Og_{K(d)}} k\to (\Lie \mathscr{B}(d))\otimes_{\Og_{K(d)}} k \to (\Lie \mathscr{A}(d))\otimes_{\Og_{K(d)}} k\to 0$$ of $k$-vector spaces is exact as well. A simple check shows that $\boldsymbol{\mu}_d$-equivariance is not affected by taking the tensor products, which implies that the last sequence is, in fact, an exact sequence of right $k[\boldsymbol{\mu}_d]$-modules. However, since $p\nmid d$, the category $k[\boldsymbol{\mu}_d]-\mathbf{Mod}$ is semisimple, which implies that there is a $\boldsymbol{\mu}_d$-equivariant isomorphism
$$(\Lie \mathscr{B}(d))\otimes_{\Og_{K(d)}} k\cong (\Lie \mathscr{T}(d))\otimes_{\Og_{K(d)}} k\oplus (\Lie \mathscr{A}(d))\otimes_{\Og_{K(d)}} k.$$ This implies the claim. 
\end{proof}
\subsection{Jumps of induced tori}
Let $L/K$ be a finite separable extension. For each $d$ prime to $p$, let $\mathscr{R}(d)$ be the Néron lft-model of $(\Res_{L/K}\Gm)\times_{K}\Spec K(d) =\Res_{L\otimes_KK(d)/K(d)}\Gm.$ Then we have a canonical isomorphism 
$$\Lie(\mathscr{R}(d))\cong \Og_{L\otimes_KK(d)}.$$
This follows from the construction of the Néron lft-model of $\Gm$ in \cite{BLR}, Chapter 10.1, Example 5, and the fact that Néron lft-models and Lie algebras commute with Weil restriction. In general, $L\otimes_KK(d)$ will only be a finite étale algebra over $K(d)$, so $L\otimes_KK(d)=L_1\times...\times L_r,$ where the $L_j$ are finite separable extensions of $K(d)$. In this case, we have $\Res_{L\otimes_KK(d)/K(d)}\Gm=\Res_{L_1/K(d)}\Gm\times...\times \Res_{L_r/K(d)}\Gm$, and we use the notation $\Og_{L\otimes_KK(d)}=\Og_{L_1}\times...\times \Og_{L_r}.$ In particular, if we put $\ord(-):=\ord_{\Res_{L/K}\Gm}(-),$ we have
$$\ord(d)=\mathrm{length}_{\Og_{K(d)}}(\coker(\Og_L\otimes_{\Og_K}\Og_{K(d)}\to \Og_{L\otimes_KK(d)})).$$
\begin{lemma}
Suppose that $d\equiv 1\mod [L:K].$ Then there is an isomorphism
$$\coker(\Og_L\otimes_{\Og_K}\Og_{K(d)}\to \Og_{L\otimes_KK(d)})\cong \bigoplus_{\nu=0}^{[L:K]-1}\Og_{K(d)}/\mathfrak{m}_d^{\nu\frac{d-1}{[L:K]}}.$$  \label{inducedjumpslemma}
\end{lemma}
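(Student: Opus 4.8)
The plan is to make both rings completely explicit inside the field $L\otimes_K K(d)$ and then compare the two $\Og_{K(d)}$-lattices. Write $n:=[L:K]$, fix a uniformiser $\pi$ of $K$, so that $K(d)=K(\pi^{1/d})$ and $\varpi_d:=\pi^{1/d}$ is a uniformiser of $\Og_{K(d)}$ with $\mathfrak{m}_d=\varpi_d\Og_{K(d)}$; let $e$ be the ramification index of $L/K$ and $\varpi_L$ a uniformiser of $L$, so that $\pi=u\varpi_L^e$ for some $u\in\Og_L^\times$.

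First I would show that $M:=L\otimes_K K(d)$ is a field. Since $K$ is strictly Henselian and $p\nmid d$, the group $\boldsymbol{\mu}_d$ of $d$-th roots of unity is contained in $\Og_K^\times$, so Kummer theory applies over $L$: the polynomial $X^d-\pi$ is irreducible over $L$ as soon as $\pi$ is not a $q$-th power in $L^\times$ for every prime $q\mid d$. The hypothesis $d\equiv 1\pmod n$ gives $\gcd(d,n)=1$, whence $\gcd(d,e)=1$ because $e\mid n$; as $v_L(\pi)=e$ is then prime to $d$, the polynomial $X^d-\pi$ is indeed irreducible, and $M=L(\pi^{1/d})$ is a field, separable over $K(d)$ of degree $n$.

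Next I would set up a compatible system of uniformisers. The residue field of $L$ is a purely inseparable extension of the separably closed field $k$, hence is itself separably closed, so Hensel's lemma (together with $p\nmid d$) shows that every unit of $\Og_L$ is a $d$-th power; combined with $\gcd(d,e)=1$ this lets me pick a uniformiser $\varpi_M$ of $M$ with $\varpi_M^d$ a unit multiple of $\varpi_L$ and with $\varpi_d$ a unit multiple (by a unit of $\Og_L$) of $\varpi_M^n$. One then checks that $\Og_M=\Og_L[\varpi_M]$ is free over $\Og_{K(d)}$ on a monomial basis in $\varpi_M$, while
$$\Og_L\otimes_{\Og_K}\Og_{K(d)}\;=\;\Og_L[\varpi_d]\;=\;\Og_{K(d)}[\varpi_M^d]\;\subseteq\;\Og_M .$$
Since $\varpi_d$ differs from $\varpi_M^n$ by a unit of $\Og_L\subseteq\Og_{K(d)}[\varpi_M^d]$, the lattice $R:=\Og_L\otimes_{\Og_K}\Og_{K(d)}$ is the $\Og_{K(d)}$-span of the monomials $\varpi_M^{da+nb}$ with $a,b\ge 0$. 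Sorting these by $\varpi_M$-exponent modulo $n$ then yields
$$\coker\bigl(\Og_L\otimes_{\Og_K}\Og_{K(d)}\to\Og_M\bigr)\;\cong\;\bigoplus_{r=0}^{n-1}\Og_{K(d)}/\mathfrak{m}_d^{c_r},\qquad c_r=\lfloor d\,a_r/n\rfloor,$$
where $a_r\in\{0,\dots,n-1\}$ is the unique solution of $d\,a_r\equiv r\pmod n$. At this point the hypothesis $d\equiv 1\pmod n$ is used decisively a second time: it forces $a_r=r$, hence $c_r=r(d-1)/n=r(d-1)/[L:K]$, which is exactly the asserted decomposition (taking $\nu=r$).

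The step I expect to be the main obstacle is the last one — not the arithmetic identity $c_r=r(d-1)/n$, which is immediate, but verifying that one really obtains this decomposition on the nose as $\Og_{K(d)}$-modules. When $L/K$ is wildly ramified one cannot arrange $\pi=\varpi_L^e$, so $\varpi_d$ is only a unit multiple of $\varpi_M^n$ by a genuine unit $u_1\in\Og_L^\times$, and multiplication by $\varpi_d$ then mixes the monomial basis of $\Og_M$; the work is to show that this mixing does not alter the elementary divisors of $\Og_M/R$ over $\Og_{K(d)}$, so that the clean "numerical-semigroup" count above still computes the cokernel. The first two steps, by contrast, are short, and are where strict Henselianity enters — via $\boldsymbol{\mu}_d\subseteq\Og_K^\times$ and the fact that units of $\Og_L$ are $d$-th powers — together with the coprimality $\gcd(d,[L:K])=1$ coming from the hypothesis.
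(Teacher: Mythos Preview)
Your approach via explicit uniformisers is genuinely different from the paper's. The paper never leaves the Eisenstein presentation: writing $\Og_L=\Og_K[t]/\langle P(t)\rangle$ with $P$ Eisenstein of degree $n$, it observes directly that $P(\pi_d^{(d-1)/n}t)=\pi_d^{d-1}Q(t)$ for another Eisenstein polynomial $Q\in\Og_{K(d)}[t]$, so that $\Og_{L\otimes_KK(d)}\cong\Og_{K(d)}[t]/\langle Q(t)\rangle$ with basis $1,t,\ldots,t^{n-1}$, while the image of $\Og_L\otimes_{\Og_K}\Og_{K(d)}$ has basis $1,\pi_d^{(d-1)/n}t,\ldots,\pi_d^{(n-1)(d-1)/n}t^{n-1}$. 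The inclusion is diagonal on the nose and the cokernel falls out immediately; no units of $\Og_L$ ever intervene, and the preliminary verification that $L\otimes_KK(d)$ is a field is absorbed into the statement that $Q$ is Eisenstein.

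In your setup the gap you flag is real but short to close. With your choices one has $e_j:=\varpi_L^j=\varpi_M^{dj}=\varpi_d^{q_j}\cdot(w^{-q_j}\varpi_M^j)$ where $q_j=j(d-1)/n$, so the matrix of the inclusion $R\hookrightarrow\Og_M$ in the bases $(e_j)$ and $(\varpi_M^i)$ factors as $B\cdot\mathrm{diag}(\varpi_d^{q_0},\ldots,\varpi_d^{q_{n-1}})$, the $j$-th column of $B$ recording $w^{-q_j}\varpi_M^j$. The key observation you are missing is that $\{w^{-q_j}\varpi_M^j\}_{j=0}^{n-1}$ is itself an $\Og_{K(d)}$-basis of $\Og_M$: these elements have $M$-valuations $0,1,\ldots,n-1$, a full set of residues modulo $n$, which is the standard criterion for a basis of the ring of integers in a totally ramified extension. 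Hence $B\in\mathrm{GL}_n(\Og_{K(d)})$ and the elementary divisors are exactly $\varpi_d^{q_j}$. Your ``sorting by $\varpi_M$-exponent modulo $n$'' heuristic is thus justified, but only \emph{after} this change of basis; as written it is not, since for $w\neq 1$ the $\Og_{K(d)}$-span of a set of monomials $\varpi_M^m$ does not decompose as a direct sum over residue classes. The paper's substitution trick is worth internalising: it bypasses this entire discussion in one line.
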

\begin{proof}
Let $P(t)\in \Og_{K}[t]$ be an Eisenstein polynomial such that $\Og_L\cong \Og_K[t]/\langle P(t)\rangle.$ This is possible because the extension $L/K$ is totally ramified. Since $d\equiv 1\mod [L:K],$ we can find an Eisenstein polynomial $Q(t)\in \Og_{K(d)}[t]$ such that 
$$P(\pi_d^{\frac{d-1}{[L:K]}}t)=\pi_d^{d-1}Q(t),$$ where $\pi_d$ denotes some uniformizing element of $K(d)$. In particular, we obtain an isomorphism
$$\Og_{L\otimes_KK(d)}\cong \Og_{K(d)}[t]/\langle Q(t)\rangle.$$ Under this isomorphism, the sequence $1,t,..., t^{[L:K]-1}$ is an $\Og_{K(d)}$-basis of $\Og_{L\otimes_KK(d)}$, whereas the sequence $1, \pi_d^{\frac{d-1}{[L:K]}}t, ..., \pi_d^{\frac{([L:K]-1)(d-1)}{[L:K]}}t^{[L:K]-1}$ is an $\Og_K(d)$-basis of the image of the morphism
$$\Og_L\otimes_{\Og_K}\Og_{K(d)}\to \Og_{L\otimes_KK(d)}.$$ This implies the claim.
\end{proof}
\begin{corollary}
Let $L/K$ be a finite separable extension. Then the jumps of the torus $\Res_{L/K}\Gm$ are $$0, \frac{1}{[L:K]},...,\frac{[L:K]-1}{[L:K]}.$$ In particular, we have \label{inducedjumpscorollary}
$$c_{tame}(\Res_{L/K}\Gm)=\frac{[L:K]-1}{2}.$$
\end{corollary}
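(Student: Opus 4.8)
The plan is to reduce the computation of the jumps of $\Res_{L/K}\Gm$ to the order-function computation already carried out in Lemma \ref{inducedjumpslemma}, using the limit definition of the jumps. First I would observe that it suffices to compute the $d$-jumps for a cofinal tower of tame extensions, and that the congruence condition $d\equiv 1\bmod [L:K]$ can be arranged along such a tower: the integers $d$ with $p\nmid d$ and $[L:K]\mid d-1$ are cofinal in the directed set of all integers prime to $p$ (take, e.g., $d_n$ to be a product of $n$ distinct primes each $\equiv 1\bmod [L:K][L:K]!$, or more simply $d_n = 1 + n![L:K]$ suitably adjusted to be prime to $p$). Hence the jumps may be read off from the cokernel described in Lemma \ref{inducedjumpslemma}.

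Next I would extract the $d$-jumps from that lemma. For $d\equiv 1\bmod[L:K]$, Lemma \ref{inducedjumpslemma} gives
$$\coker\psi_d \cong \bigoplus_{\nu=0}^{[L:K]-1}\Og_{K(d)}/\mathfrak{m}_d^{\nu\frac{d-1}{[L:K]}},$$
so the multiset of $d$-jumps of $\Res_{L/K}\Gm$ is $\{\nu\frac{d-1}{[L:K]} : \nu = 0,\dots,[L:K]-1\}$ (note $\dim \Res_{L/K}\Gm = [L:K]$, matching the number of summands). Dividing the $i$-th jump by $d$ and letting $d\to\infty$ along the tower, $\nu\frac{d-1}{[L:K]d}\to \frac{\nu}{[L:K]}$, so the jumps are $0,\frac{1}{[L:K]},\dots,\frac{[L:K]-1}{[L:K]}$ as claimed. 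Here one should invoke the fact (cited in Definition \ref{elementarydefinition}(ii)) that the limit exists and is independent of the tower, which legitimizes computing it along the convenient subtower.

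Finally, the tame base change conductor is the sum of the jumps:
$$c_{tame}(\Res_{L/K}\Gm) = \sum_{\nu=0}^{[L:K]-1}\frac{\nu}{[L:K]} = \frac{1}{[L:K]}\cdot\frac{([L:K]-1)[L:K]}{2} = \frac{[L:K]-1}{2}.$$
I do not anticipate a serious obstacle here: the only point requiring a little care is the cofinality claim used to restrict to $d\equiv 1\bmod[L:K]$, and the appeal to independence of the tower in Definition \ref{elementarydefinition}(ii); everything else is the arithmetic sum above together with the already-proved Lemma \ref{inducedjumpslemma}.
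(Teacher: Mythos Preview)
Your argument is essentially correct in spirit, and matches the paper's approach of reading off the jumps from Lemma~\ref{inducedjumpslemma} along a convenient tower and then passing to the limit. However, there is a genuine gap in the cofinality step that you flag as ``requiring a little care.''

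The set $\{d : p\nmid d,\ d\equiv 1\bmod [L:K]\}$ is \emph{not} cofinal in the set of all $d$ prime to $p$ (ordered by divisibility) as soon as $[L:K]$ has a prime factor $\ell\neq p$. Indeed, any $d\equiv 1\bmod [L:K]$ is automatically coprime to $[L:K]$, hence coprime to $\ell$; therefore no element of your subset can be a multiple of $\ell$. Since $\ell$ itself is prime to $p$, the extension $K(\ell)$ is tame, and no member of your tower can contain $K(\ell)$. Both of your suggested constructions ($d_n$ a product of primes $\equiv 1\bmod[L:K][L:K]!$, or $d_n=1+n![L:K]$) produce only integers coprime to $[L:K]$, so neither yields a tower cofinal among all tame extensions. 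Consequently the appeal to the independence statement in Definition~\ref{elementarydefinition}(ii) is not justified: that statement applies only to towers cofinal in \emph{all} finite tame extensions of $K$.

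The paper closes exactly this gap. It introduces $\delta$, the prime-to-$p$ part of $[L:K]$ (so that $K(\delta)\subseteq L$ and $L/K(\delta)$ is purely wild), and then argues, using the description of the jumps in terms of Edixhoven's filtration (Proposition~\ref{filtrationproposition}) together with the density of $\Z_{\langle \delta\rangle}\cap\Z_{\langle p\rangle}\cap[0,1[$ in $\Z_{\langle p\rangle}\cap[0,1[$, that one may compute the jumps using a tower cofinal only among tame extensions of degree \emph{prime to $\delta$}. For such $d$ one has $\gcd(d,[L:K])=1$, and then your cofinality argument does go through (e.g.\ $d\mapsto d^{\varphi([L:K])}$ gives a multiple $\equiv 1\bmod[L:K]$ still prime to $p\delta$). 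From that point on, the computation is exactly as you describe. So your proof is fine in the purely wild case $[L:K]=p^m$, but in general you must supply the Edixhoven-filtration argument (or something equivalent) before restricting the tower.
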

\begin{proof}
Let $\delta\in \N$ be such that $K(\delta)\subseteq L$ and such that the extension $L/K(\delta)$ is purely wild. First note that, in order to calculate the jumps, we may also consider a tower of finite tame extensions $K\subseteq K(d_1)\subseteq K(d_2)\subseteq ...$ which has the property that all $d_n$ are prime to $\delta$, and such that the tower is cofinal in the set of all finite tame extensions of $K$ whose degree is prime to $\delta$. This follows from the description of the jumps in terms of Edixhoven's filtration (see \cite{HN}, Chapter 6, or Proposition \ref{filtrationproposition} below) and the fact that $\Z_{\langle \delta\rangle}\cap\Z_{\langle p\rangle}\cap [0,1[$ is dense in $\Z_{\langle p\rangle}\cap [0,1[.$ In this case, we may assume that $d_n\equiv 1\mod[L:K]$ for all $n\in\N$. Since $d_n\to \infty $ as $n\to \infty$, we find that
$$\lim_{n\to \infty} \frac{\nu (d_n-1)}{d_n[L:K]}=\frac{\nu}{[L:K]}$$ for $\nu=0,..., [L:K]-1$. Hence the previous lemma implies the result. 
\end{proof}\\
\\
\noindent$\mathbf{Remark}.$ The corollary above shows that the lowest common multiple of the denominators of the jumps of a torus $T$ need not, in general, coincide with the degree $[F:K]$, where $F$ is the splitting field of $T$. Indeed, suppose that $T=\Res_{L/K}\Gm$ for some finite separable extension $L/K$ which is not Galois. Then the lowest common multiple of the denominators of the jumps is given by $[L:K]$, whereas the splitting degree of $T$ is strictly greater than $[L:K].$ This answers the second part of Question 10.1.2 of \cite{HN} negatively. 
\subsection{An example of non-invariance under isogeny}
Let $L/K$ be a Galois extension with Galois group isomorphic to $\Z/2\Z\times\Z/2\Z$. For example, one could choose an algebraic closure $\overline{\mathbf{F}}_2$ of $\mathbf{F}_2$ and let $K$ be the field of fractions of $W(\overline{\mathbf{F}}_2),$ where $W(-)$ stands for the ring of Witt vectors. If $\zeta_8$ denotes a primitive $8$th root of unity in $\overline{K}$, the Galois group of $L:=K(\zeta_8)$ over $K$ is canonically isomorphic to $(\Z/8\Z)^\times.$ Choose a presentation
$$\Gal(L/K)= \langle \sigma, \tau\mid \sigma^2=\tau^2=e, \sigma\tau=\tau\sigma\rangle.$$ For each element $\alpha$ of this group, denote by $L_\alpha$ the subfield of $L$ fixed by $\alpha.$ We will now show that there is an isogeny $$\Res_{L/K}\Gm\to \Gm\times\Res^1_{L_\sigma/K}\Gm\times\Res^1_{L_\tau/K}\Gm\times\Res^1_{L_{\sigma\tau}/K}\Gm.$$  (If $F/K$ is a finite separable extension, $\Res^1_{F/K} \Gm$ denotes the \it norm one torus \rm associated to this extension, which is defined to be the kernel of the norm map $\Res_{F/K}\Gm \to \Gm.$ If $F/K$ is quadratic, then $X^\ast(\Res^1_{F/K}\Gm)$ is isomorphic to $\Z$ as a $\Z$-module, and the unique generator of $\Gal(F/K)$ acts as multiplication by $-1.$)
To see this, note that there is a canonical isomorphism
$$X^\ast(\Res_{L/K}\Gm)\cong \Z[\Gal(L/K)].$$
Now denote by $V_\sigma$ the $\Z[\Gal(L/K)]$-module which is free of rank one and on which $\sigma$ acts as multiplication by $-1$, and $\tau$ as the identity. Define $V_{\tau}$ analogously, and let $V_{\sigma\tau}$ be the $\Z[\Gal(L/K)]$-module which is free of rank one and on which both $\sigma$ and $\tau$ act as multiplication by $-1$. Consider the morphism of $\Z[\Gal(L/K)]$-modules
$$\Z\oplus V_{\sigma}\oplus V_{\tau}\oplus V_{\sigma\tau}\to \Z[\Gal(L/K)]$$ given by
\begin{align*}
(1,0,0,0)&\mapsto e+\sigma+\tau+\sigma\tau\\
(0,1,0,0)&\mapsto e-\sigma+\tau-\sigma\tau\\
(0,0,1,0)&\mapsto e+\sigma-\tau-\sigma\tau\\
(0,0,0,1)&\mapsto e-\sigma-\tau+\sigma\tau.
\end{align*}
A direct calculation shows that this morphism has determinant equal to $16$, so it becomes an isomorphism after tensoring with $\Q$. Therefore, the map of $\Z[\Gal(L/K)]$-modules we defined does indeed give rise to an isogeny as above. It follows that the jumps of algebraic tori are not invariant under isogeny: The jumps of the torus on the left are $0,1/4,1/2,3/4$, whereas the jumps of the torus on the right are $0, 1/2, 1/2, 1/2.$ This is in contrast with the situation in the tamely ramified case, where the jumps are invariant under isogeny. 
\subsection{The order function of induced algebraic tori}
In this section, we will consider the order functions associated to induced algebraic tori. Let $L/K$ be a finite separable extension of degree $n$, let $\mathscr{R}$ denote the Néron model of the torus $\Res_{L/K}\Gm,$ and for each $d$ prime to $p$, denote by $\mathscr{R}(d)$ the Néron model of $(\Res_{L/K}\Gm)\times_KK(d) \cong \Res_{L\otimes_KK(d)/K(d)}\Gm$ over $\Og_{K(d)}.$
In order to study the order function of induced algebraic tori in this generality, it seems necessary to use the fact that the jumps of a semiabelian variety as defined in the introduction coincide with the jumps of a filtration on the special fibre of $\mathscr{R}$ defined by Edixhoven \cite{E}. For a more comprehensive treatment of this filtration, see \cite{E} or \cite{HN2}, Chapter 4. We shall need the following results, which we state only in the case of induced tori, but which are valid in much greater generality:
\begin{proposition}
For each $d$ prime to $p$, there is a decreasing filtration $F_d^a\mathscr{R}_k$ on $\mathscr{R}_k$, where the index $a$ lies in $\{ 0,..., d\},$ with the following property: For each $0\leq j \leq d-1,$ let $\mathrm{Gr}_d^ {j} \mathscr{R}_k:= F_d^{j}\mathscr{G}_k/F_d^{j +1}\mathscr{G}_k.$ If $j_{d,1},..., j_{n,1}$ denote the $d$-jumps of $\Res_{L/K}\Gm$, then 
$$\dim \mathrm{Gr}^{j}_d \mathscr{R}_k=\#\{i=1,..., n\colon j_{d,i}=j\}.$$
Furthermore, for all $0\leq j\leq d-1$ and all $d$, $m$ prime to $p$, we have 
$$F^{jm}_{dm}\mathscr{R}_k=F^{j}_d\mathscr{R}_k,$$ $F_d^0\mathscr{R}_k=\mathscr{R}_k,$ and $F^d_d\mathscr{R}_k=0.$
\end{proposition}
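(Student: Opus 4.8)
The plan is to deduce this from Edixhoven's construction of the ramification filtration on the special fibre of a N\'eron model (\cite{E}), specialised to $\mathscr{R}$. Since the filtration, its graded pieces, and the $d$-jumps all depend only on $\Lie\mathscr{R}$ and the special fibre, and since the component group $\pi_0(\mathscr{R}_k)$ is \'etale, I would first replace $\mathscr{R}$ throughout by its identity component $\mathscr{R}^0$, which is a smooth group scheme of \emph{finite} type over $\Og_K$ with $\Lie\mathscr{R}^0=\Lie\mathscr{R}$; this brings us into the setting in which Edixhoven's results are stated, and none of the assertions is affected. One may then set $F^0_d\mathscr{R}_k:=\mathscr{R}_k$ and $F^a_d\mathscr{R}_k\subseteq\mathscr{R}^0_k$ for $a\ge 1$, which is harmless for the dimension count.

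Next I would recall the construction. Fix $d$ prime to $p$, put $R':=\Og_{K(d)}$, and choose a uniformiser $\pi_d$ of $R'$ on which $\zeta\in\boldsymbol{\mu}_d\cong\Gal(K(d)/K)$ acts by $\zeta\cdot\pi_d=\zeta\pi_d$, so that $\pi:=\pi_d^{d}$ is a uniformiser of $K$ and $\mathfrak{m}_d$ is $\boldsymbol{\mu}_d$-stable. Form the Weil restriction $\mathscr{R}':=\Res_{R'/\Og_K}\mathscr{R}(d)^0$, a smooth finite-type $\Og_K$-group scheme carrying a $\boldsymbol{\mu}_d$-action covering the trivial action on $\Og_K$. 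By the N\'eron mapping property together with tame Galois descent (as in \cite{E}), the canonical morphism $\mathscr{R}^0\to\mathscr{R}'$ is a closed immersion realising $\mathscr{R}^0$ inside the $\boldsymbol{\mu}_d$-fixed locus of $\mathscr{R}'$. Because $\mathfrak{m}_d$ is $\boldsymbol{\mu}_d$-stable and $k$ is separably closed (hence contains $\boldsymbol{\mu}_d$, since $p\nmid d$), the special fibre $\mathscr{R}'_k$ carries a $k$-linear $\boldsymbol{\mu}_d$-action and so decomposes into $\chi_d$-isotypic summands; Edixhoven's filtration $F^\bullet_d$ on $\mathscr{R}_k$ is the one induced, in the manner of \cite{E}, by this $\boldsymbol{\mu}_d$-grading under $\mathscr{R}^0_k\hookrightarrow\mathscr{R}'_k$, with $F^a_d\mathscr{R}_k$ corresponding to the part of $\mathscr{R}'_k$ on which $\boldsymbol{\mu}_d$ acts only through $\chi_d^a,\chi_d^{a+1},\dots$. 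This is a decreasing filtration by closed subgroup schemes indexed by $a\in\{0,\dots,d\}$.

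With the construction in hand, the three properties are read off from the literature. The dimension formula $\dim\mathrm{Gr}^{j}_d\mathscr{R}_k=\#\{i:j_{d,i}=j\}$ follows from \cite{HN2}, Corollary~4.8 and Theorem~4.10 (the results already invoked in the well-known Lemma above): these identify $(\Lie\mathscr{R}(d))\otimes_{R'}k$ with $\bigoplus_{i}\chi_d^{\otimes j_{d,i}}$ as a $k[\boldsymbol{\mu}_d]$-module, and under the construction above the $\chi_d^{\otimes j}$-isotypic part is precisely $\mathrm{Gr}^j_d\mathscr{R}_k$. The compatibility $F^{jm}_{dm}\mathscr{R}_k=F^{j}_d\mathscr{R}_k$ is exactly the refinement property of Edixhoven's filtration along the tower $K(d)\subseteq K(dm)$ proved in \cite{E} --- it is what makes the limits defining the jumps well posed --- and here it follows from the compatibility of the N\'eron models $\mathscr{R}(d)$ and $\mathscr{R}(dm)$ under $\Spec\Og_{K(dm)}\to\Spec\Og_{K(d)}$ together with that of the actions of $\boldsymbol{\mu}_{dm}$ and of its quotient $\boldsymbol{\mu}_{dm}/\boldsymbol{\mu}_m\cong\boldsymbol{\mu}_d$ relative to the choices $\pi_d=\pi_{dm}^{m}$. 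Finally $F^0_d\mathscr{R}_k=\mathscr{R}_k$ holds by construction, since $F^0_d$ corresponds to the full range of characters $\chi_d^0,\chi_d^1,\dots,\chi_d^{d-1}$, and $F^d_d\mathscr{R}_k=0$ because all $d$-jumps of $\Res_{L/K}\Gm$ satisfy $j_{d,i}\le d-1$ by the well-known Lemma above, so no isotypic part survives in weight $\ge d$.

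The only point requiring genuine care --- and hence the main, though modest, obstacle --- is the bookkeeping with the \emph{semilinear} $\boldsymbol{\mu}_d$-action on $\mathscr{R}(d)$ (and the induced $k$-linear action after base change to $k$), together with the reduction from N\'eron lft-models to their finite-type identity components; beyond this one need only observe that Edixhoven's arguments in \cite{E}, although usually phrased for abelian varieties, use nothing but smoothness and the N\'eron property, and therefore apply verbatim to the smooth affine group scheme $\mathscr{R}^0$.
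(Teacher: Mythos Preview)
Your proposal is correct and aligns with the paper's approach: the paper's own proof is simply the citation ``See \cite{HN2}, Chapter~4, for both preceding propositions,'' and your sketch is precisely an unpacking of what that reference (together with \cite{E}) contains. In particular, your reduction to the identity component, the Weil-restriction description of Edixhoven's filtration, and the appeals to \cite{HN2}, Corollary~4.8 and Theorem~4.10 for the dimension count are exactly the ingredients underlying the cited material, so there is nothing to correct.
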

\begin{proposition}
There is a decreasing filtration $\tilde{F}^\alpha \mathscr{R}_k$ on $\mathscr{R}_k$, where the index $\alpha$ lies in $\Z_{\langle p\rangle}\cap [0,1[,$ such that for any $d$ prime to $p$ and $i=0,..., d-1$, we have 
$$\tilde{F}^{i/d}\mathscr{G}_k = F_d^i\mathscr{G}_k.$$  It has the following property: For a real number $0\leq j<1$, define \label{filtrationproposition}
$$\tilde{F}^{>j}\mathscr{R}_k:=\tilde{F}^\beta\mathscr{R}_k,$$ where $\beta\in \Z_{\langle p\rangle}\cap ]j,1[$ is chosen such that $\tilde{F}^{\beta'}\mathscr{R}_k=\tilde{F}^{\beta}\mathscr{R}_k$ for all $\beta'\in \Z_{\langle p\rangle}\cap ]j,\beta]$. If $j\not=0$, define $$\tilde{F}^{<j}\mathscr{R}_k:=\tilde{F}^\gamma\mathscr{R}_k,$$ where $\gamma\in \Z_{\langle p\rangle}\cap [0,j[$ has been chosen such that $\tilde{F}^{\gamma'}\mathscr{R}_k=\tilde{F}^{\gamma}\mathscr{R}_k$ for all $\gamma'\in \Z_{\langle p \rangle}\cap [\gamma, j[.$ Put $\tilde{F}^{<0}\mathscr{R}_k:=\mathscr{R}_k$. Then define 
$$\tilde{\mathrm{Gr}}^j\mathscr{R}_k:=\tilde{F}^{<j}\mathscr{R}_k/\tilde{F}^{>j}\mathscr{R}_k.$$
If the jumps of $\Res_{L/K}\Gm$ are $j_1\leq...\leq j_n$, then for all $j\in [0,1[,$ we have
$$\dim \tilde{\mathrm{Gr}}^j\mathscr{R}_k=\#\{i=1,...,n\colon j_i=j\}.$$
\end{proposition}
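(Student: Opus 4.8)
The plan is to manufacture $\tilde{F}^{\bullet}\mathscr{R}_k$ out of the filtrations $F_d^{\bullet}\mathscr{R}_k$ provided by the previous proposition, and then to extract the dimension formula by comparing $\tilde{F}^{<j}\mathscr{R}_k$ and $\tilde{F}^{>j}\mathscr{R}_k$ with suitable pieces $F_{d_n}^{\bullet}\mathscr{R}_k$ along a cofinal tower of tame extensions. For the construction, I would set $\tilde{F}^{\alpha}\mathscr{R}_k:=F_d^{a}\mathscr{R}_k$ whenever $\alpha=a/d$ with $0\le a<d$ and $p\nmid d$. This is independent of the chosen representation: writing $\alpha=a_0/d_0$ in lowest terms (so $p\nmid d_0$) and $d=d_0 m$, $a=a_0 m$, the identity $F_{d_0 m}^{a_0 m}\mathscr{R}_k=F_{d_0}^{a_0}\mathscr{R}_k$ of the previous proposition gives $F_d^{a}\mathscr{R}_k=F_{d_0}^{a_0}\mathscr{R}_k$. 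The family is decreasing because, given $\alpha\le\alpha'$, one may bring them to a common denominator prime to $p$ and use that each $F_d^{\bullet}\mathscr{R}_k$ is decreasing. Since Edixhoven's filtration has only finitely many jumps (see \cite{E} and \cite{HN2}, Chapter~4), $\tilde{F}^{\bullet}\mathscr{R}_k$ takes only finitely many distinct values; combined with the density of $\Z_{\langle p\rangle}\cap[0,1[$ in $[0,1[$, this guarantees that the indices $\beta,\gamma$ occurring in the statement exist, so that $\tilde{F}^{>j}\mathscr{R}_k$, $\tilde{F}^{<j}\mathscr{R}_k$ and $\tilde{\mathrm{Gr}}^{j}\mathscr{R}_k$ are well defined.

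For the dimension count, the first ingredient is that summing the first assertion of the previous proposition over $a'\ge a$ (and using that the $d$-jumps satisfy $j_{d,i}\le d-1$) yields $\dim F_d^{a}\mathscr{R}_k=\#\{i\colon j_{d,i}\ge a\}$; in particular $\dim\mathscr{R}_k=\dim F_d^{0}\mathscr{R}_k=g$. Fix a cofinal tower $K\subseteq K(d_1)\subseteq K(d_2)\subseteq\cdots$, and let $j$ satisfy $0<j<1$. Put $b_n:=\lceil j d_n\rceil-1$ and $c_n:=\lfloor j d_n\rfloor+1$, so that $b_n/d_n\to j$ from below and $c_n/d_n\to j$ from above. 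Because $\tilde{F}^{\bullet}\mathscr{R}_k$ is eventually constant on the half-open intervals $[\gamma,j[$ and $]j,\beta]$ involved in the definitions, for all $n\gg 0$ we have $\tilde{F}^{<j}\mathscr{R}_k=F_{d_n}^{b_n}\mathscr{R}_k$ and $\tilde{F}^{>j}\mathscr{R}_k=F_{d_n}^{c_n}\mathscr{R}_k$, hence
\[
\dim\tilde{F}^{<j}\mathscr{R}_k=\#\{i\colon j_{d_n,i}\ge b_n\},\qquad \dim\tilde{F}^{>j}\mathscr{R}_k=\#\{i\colon j_{d_n,i}\ge c_n\}.
\]
Since the jumps are listed in increasing order, $j_i=\lim_n j_{d_n,i}/d_n$; comparing with $\lim_n b_n/d_n=\lim_n c_n/d_n=j$ shows that, uniformly in $i$ for $n\gg 0$, $j_i<j$ implies $j_{d_n,i}<b_n$ and $j_i>j$ implies $j_{d_n,i}\ge c_n$. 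Therefore $\dim\tilde{F}^{<j}\mathscr{R}_k\le\#\{i\colon j_i\ge j\}$ and $\dim\tilde{F}^{>j}\mathscr{R}_k\ge\#\{i\colon j_i>j\}$; these inequalities hold trivially for $j=0$ as well (where $\tilde{F}^{<0}\mathscr{R}_k=\mathscr{R}_k$), so $\dim\tilde{\mathrm{Gr}}^{j}\mathscr{R}_k\le\#\{i\colon j_i=j\}$ for every $j\in[0,1[$.

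To upgrade this to an equality I would argue by conservation of dimension. Telescoping the definition of $\tilde{\mathrm{Gr}}^{\bullet}\mathscr{R}_k$ over the finitely many distinct values of $\tilde{F}^{\bullet}\mathscr{R}_k$ gives $\sum_{j\in[0,1[}\dim\tilde{\mathrm{Gr}}^{j}\mathscr{R}_k=\dim\mathscr{R}_k-L$, where $L:=\lim_{\alpha\to 1^-}\dim\tilde{F}^{\alpha}\mathscr{R}_k=\dim F_{d_n}^{d_n-1}\mathscr{R}_k=\#\{i\colon j_{d_n,i}\ge d_n-1\}$ for $n\gg 0$. The same comparison as above, now at the value $1$, shows $L\le\#\{i\colon j_i\ge 1\}$, whence
\[
\sum_{j\in[0,1[}\dim\tilde{\mathrm{Gr}}^{j}\mathscr{R}_k=g-L\ \ge\ g-\#\{i\colon j_i\ge 1\}\ =\ \#\{i\colon j_i\in[0,1[\}\ =\ \sum_{j\in[0,1[}\#\{i\colon j_i=j\}.
\]
Combined with the term-by-term inequality of the previous paragraph — which forces the reverse inequality between the two finite sums — both sums agree, and a term-by-term inequality between nonnegative integers with equal finite sums is a term-by-term equality. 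Hence $\dim\tilde{\mathrm{Gr}}^{j}\mathscr{R}_k=\#\{i\colon j_i=j\}$ for all $j\in[0,1[$.

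I expect the main obstacle to be the step yielding the two inequalities — specifically, identifying $\tilde{F}^{<j}\mathscr{R}_k$ and $\tilde{F}^{>j}\mathscr{R}_k$ with the pieces $F_{d_n}^{b_n}\mathscr{R}_k$ and $F_{d_n}^{c_n}\mathscr{R}_k$ for $n\gg 0$, and controlling the borderline indices $i$ with $j_i=j$ as $n\to\infty$. One should not hope to compute $\dim\tilde{\mathrm{Gr}}^{j}\mathscr{R}_k$ exactly by a direct comparison, because the $d$-jumps $j_{d,i}$ need not equal $\lfloor d\,j_i\rfloor$; it is precisely the conservation argument that bypasses this difficulty. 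The remaining point, finiteness of Edixhoven's filtration, is standard.
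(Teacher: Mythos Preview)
The paper does not prove this proposition at all: its entire proof is the sentence ``See \cite{HN2}, Chapter 4, for both preceding propositions.'' What you have written is a genuine, self-contained argument where the paper simply defers to the literature.

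Your construction of $\tilde{F}^{\alpha}\mathscr{R}_k$ from the $F_d^{a}\mathscr{R}_k$ via the compatibility $F_{dm}^{am}=F_d^{a}$ is exactly the intended one, and the only external input you need beyond the previous proposition is the finiteness of Edixhoven's filtration, which is indeed what \cite{E} and \cite{HN2} provide. Your dimension argument is correct: the identification $\tilde{F}^{<j}=F_{d_n}^{b_n}$, $\tilde{F}^{>j}=F_{d_n}^{c_n}$ for $n\gg 0$ follows because $b_n/d_n\to j^-$ and $c_n/d_n\to j^+$ land in the constancy intervals $[\gamma,j[$ and $]j,\beta]$; the comparison with $j_i=\lim_n j_{d_n,i}/d_n$ then gives the two inequalities as you say (the second one at $j=0$ is not quite ``trivial'' but follows from the same reasoning with $c_n=1$). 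The conservation-of-dimension step is a clean way to close the argument and, as you note, avoids having to pin down the borderline indices with $j_i=j$ directly. So your proof is sound; it simply supplies what the paper chose to outsource to \cite{HN2}.
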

\begin{proof} See \cite{HN2}, Chapter 4, for both preceding propositions. 
\end{proof}
\begin{corollary}
Let $L/K$ be a finite separable extension and let $d$ be prime to $p$. Then the $d$-jumps of $\Res_{L/K}\Gm$ are given by $$0, \bigg\lfloor \frac{d}{[L:K]} \bigg\rfloor,..., \bigg\lfloor \frac{([L:K]-1)d}{[L:K]}\bigg\rfloor.$$
\end{corollary}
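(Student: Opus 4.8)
The plan is to deduce this from Corollary~\ref{inducedjumpscorollary} together with the two filtration propositions, bypassing the congruence condition $d\equiv 1\bmod[L:K]$ that was needed in Lemma~\ref{inducedjumpslemma}. Write $n:=[L:K]$. By Proposition~\ref{filtrationproposition}, the $d$-jumps of $\Res_{L/K}\Gm$ are precisely the integers $i\in\{0,\dots,d-1\}$ for which $\mathrm{Gr}^i_d\mathscr{R}_k\neq 0$, counted with multiplicity $\dim\mathrm{Gr}^i_d\mathscr{R}_k$, and these graded pieces are computed by the real-indexed filtration $\tilde F^\bullet$ via $\tilde F^{i/d}\mathscr{R}_k=F^i_d\mathscr{R}_k$. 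So the multiset of $d$-jumps is determined by knowing, for each rational $i/d$ with $0\le i\le d-1$, the jump of the filtration $\tilde F^\bullet$ at $i/d$; and Corollary~\ref{inducedjumpscorollary} tells us that $\tilde F^\bullet$ jumps exactly at the values $0,\tfrac1n,\dots,\tfrac{n-1}{n}$, each with multiplicity one. Combining the two, $i$ is a $d$-jump (with multiplicity one) if and only if the half-open interval $(\,i/d\,,\,(i+1)/d\,]$ — or, more precisely, the set of filtration-indices strictly between consecutive candidate rationals — contains exactly one of the points $\nu/n$; by the bookkeeping built into the definition of $\tilde{\mathrm{Gr}}$, this happens precisely when $i$ is the largest integer with $i/d\le \nu/n$, i.e. $i=\lfloor \nu d/n\rfloor$, for some $\nu\in\{0,\dots,n-1\}$.

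Concretely, I would argue as follows. For $\nu=0,\dots,n-1$ set $i_\nu:=\lfloor \nu d/n\rfloor$. Since $d$ is prime to $p$ and hence $n d$-th roots of unity behave well, the points $\nu/n$ are distinct and lie in $[0,1)$; because the $d_n$ in the cofinal tower can be taken prime to $\delta$ (the tame part of $L/K$), the values $i_\nu$ are distinct for distinct $\nu$ — indeed $i_\nu<i_{\nu+1}$ whenever $\nu d/n$ and $(\nu+1)d/n$ are separated by an integer, which holds once $d$ is large and appropriately chosen, and in any case the multiset identity is what we are after. Now $\tilde F^{a}\mathscr{R}_k$ is constant for $a$ ranging over $[i_\nu/d,\ i_{\nu+1}/d)$ except that it drops by one dimension as $a$ crosses $\nu/n$ (which lies in that range since $i_\nu/d\le \nu/n<(i_\nu+1)/d\le i_{\nu+1}/d$). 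Hence $F^{i}_d\mathscr{R}_k=\tilde F^{i/d}\mathscr{R}_k$ has $\dim\mathrm{Gr}^i_d\mathscr{R}_k=1$ exactly when $i=i_\nu$ for some $\nu$, and $=0$ otherwise. By Proposition~\ref{filtrationproposition} this says precisely that the multiset of $d$-jumps is $\{i_0,i_1,\dots,i_{n-1}\}=\{0,\lfloor d/n\rfloor,\dots,\lfloor (n-1)d/n\rfloor\}$, which is the claim.

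The main obstacle is the indexing subtlety in matching the real-indexed filtration $\tilde F^\bullet$ against the integer-indexed $F^\bullet_d$: one must be careful about whether the jump "at $\nu/n$" is recorded at index $i_\nu=\lfloor\nu d/n\rfloor$ or at $i_\nu+1$, i.e. about the left/right conventions in the definitions of $\tilde F^{<j}$, $\tilde F^{>j}$ and of $\mathrm{Gr}^i_d$. The safe way to nail this down is to use the compatibility $F^{jm}_{dm}\mathscr{R}_k=F^j_d\mathscr{R}_k$ from the first filtration proposition: pass to a large auxiliary $d'=dm$ with $d'\equiv 1\bmod n$ (possible after enlarging $m$ prime to $p$), where Lemma~\ref{inducedjumpslemma} gives the $d'$-jumps explicitly as $\nu\frac{d'-1}{n}$, then use $F^{i}_d\mathscr{R}_k=F^{im}_{dm}\mathscr{R}_k$ to read off which $F^i_d$ are distinct; a short computation shows $\lfloor\nu d/n\rfloor\cdot m$ and $\nu\frac{dm-1}{n}=\nu\frac{d'-1}{n}$ lie in the same "constancy block" of $\tilde F^\bullet$, pinning the index to $\lfloor\nu d/n\rfloor$. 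This reduces the general statement to the already-proved congruence case plus the elementary monotonicity of the floor function, and no genuinely new input is required.
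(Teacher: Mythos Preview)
Your approach is essentially the paper's: use Corollary~\ref{inducedjumpscorollary} to know that $\tilde F^\bullet$ jumps exactly at $0,1/n,\dots,(n-1)/n$, then read off via $F^i_d=\tilde F^{i/d}$ that the multiplicity of $i$ as a $d$-jump is $\#\big([i/d,(i+1)/d[\,\cap\,\{0,1/n,\dots,(n-1)/n\}\big)=\#\{\nu:\lfloor \nu d/n\rfloor=i\}$. One slip to fix: the $i_\nu=\lfloor\nu d/n\rfloor$ are \emph{not} in general distinct (take $d<n$), so your claim $\dim\mathrm{Gr}^i_d\mathscr{R}_k=1$ is wrong in that case; the correct statement is that this dimension equals the number of $\nu$ with $i_\nu=i$, which still yields the desired multiset identity, and your last-paragraph detour through an auxiliary $d'\equiv 1\bmod n$ is then unnecessary.
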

\begin{proof}(Compare \cite{HN2}, Proposition 7.5)
Let $d$ be prime to $p$, and let $i\in \{0,..., d-1\}.$ By definition, the multiplicity of $i$ as a $d$-jump is given by
$$\dim F^i_d\mathscr{R}_k-\dim F^{i+1}_d\mathscr{R}_k = \dim \tilde{F}^{\frac{i}{d}}\mathscr{R}_k-\dim\tilde{F}^{\frac{i+1}{d}}\mathscr{R}_k.$$ Hence, the multiplicity of $i$ as a $d$-jump is given by the sum of the multiplicities of the jumps contained in the half-open interval $[i/d, (i+1)/d[.$ However, by Corollary \ref{inducedjumpscorollary}, the jumps of $\Res_{L/K}\Gm$ are given by $0, 1/[L:K],..., ([L:K]-1)/[L:K].$ Hence, all one has to check is that for $i=0,..., d-1,$
$$\#\Bigg(\bigg[\frac{i}{d}, \frac{i+1}{d}\bigg[\cap \bigg\{0, \frac{1}{[L:K]},..., \frac{[L:K]-1}{[L:K]}\bigg\}\Bigg) =\#\{\lambda=0,..., [L:K]-1\colon \bigg \lfloor \frac{d\lambda}{[L:K]}\bigg\rfloor=i\}.$$ 
That, however, is straightforward.  
\end{proof}

\begin{proposition}
As before, put $\ord(d):=\ord_{\Res_{L/K}\Gm}(d)$ for all $d$ prime to $p$. Let $L/K$ be a finite separable extension. Let $\alpha\in \N$ be prime to $p$ and let $q$ be a nonnegative integer such that $\alpha+q[L:K]$ is prime to $p$. Then we have \label{torusorderfunctionproposition}
$$\ord(\alpha+q[L:K])=\ord(\alpha)+q[L:K]c_{tame}(\Res_{L/K}\Gm).$$
\end{proposition}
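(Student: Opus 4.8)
The plan is to reduce the assertion to an elementary identity for floor functions, using the explicit description of the $d$-jumps established in the corollary immediately preceding this proposition. Write $n := [L:K]$. By Definition \ref{elementarydefinition}(iii), $\ord(d)$ is the sum of the $d$-jumps of $\Res_{L/K}\Gm$, and there are exactly $g = \dim\Res_{L/K}\Gm = n$ of them. Since $\alpha$ and $\alpha + qn$ are both prime to $p$ by hypothesis, that corollary applies to $d=\alpha$ and to $d=\alpha+qn$ and yields
$$\ord(d) = \sum_{\lambda=0}^{n-1}\left\lfloor\frac{\lambda d}{n}\right\rfloor.$$

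Next I would exploit that $\lambda q \in \Z$ for every integer $\lambda$, so that
$$\left\lfloor\frac{\lambda(\alpha+qn)}{n}\right\rfloor = \left\lfloor\frac{\lambda\alpha}{n} + \lambda q\right\rfloor = \left\lfloor\frac{\lambda\alpha}{n}\right\rfloor + \lambda q.$$
Summing over $\lambda = 0,\dots,n-1$ gives
$$\ord(\alpha+qn) = \ord(\alpha) + q\sum_{\lambda=0}^{n-1}\lambda = \ord(\alpha) + q\,\frac{n(n-1)}{2}.$$
Finally, Corollary \ref{inducedjumpscorollary} gives $c_{tame}(\Res_{L/K}\Gm) = \frac{n-1}{2}$, whence $q\,\frac{n(n-1)}{2} = qn\,c_{tame}(\Res_{L/K}\Gm)$, which is exactly the claimed formula.

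There is no serious obstacle here: the substantive content is entirely packaged into the $d$-jump formula of the preceding corollary, and the only point requiring a moment's care is that this formula is being invoked at two different values of $d$, which is legitimate precisely because both $\alpha$ and $\alpha+qn$ are assumed prime to $p$ (this is the reason for imposing the hypothesis on $q$). If one preferred to avoid quoting the explicit floor formula, the same computation could be run directly on Edixhoven's filtration via Proposition \ref{filtrationproposition}: the jumps in $[0,1[$ together with their multiplicities are independent of $d$, the $d$-jumps arise by counting how many jumps fall in each interval $[i/d,(i+1)/d[$, and one then compares the two groupings for $d=\alpha$ and $d=\alpha+qn$. However, the floor-function argument above is the most economical, and that is the one I would present.
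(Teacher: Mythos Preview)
Your proof is correct and is essentially identical to the paper's: the paper likewise writes $\ord(d)=\sum_{i=0}^{[L:K]-1}\lfloor di/[L:K]\rfloor$ from the preceding corollary, applies the same floor identity $\lfloor (d+q[L:K])i/[L:K]\rfloor=\lfloor di/[L:K]\rfloor+qi$, and concludes. Your explicit identification of the resulting sum $q\,n(n-1)/2$ with $qn\,c_{tame}(\Res_{L/K}\Gm)$ via Corollary~\ref{inducedjumpscorollary} just makes the last step a touch more transparent than in the paper.
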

\begin{proof}
By our previous results, we have
$$\ord(d)=\sum_{i=0}^{[L:K]-1} \bigg\lfloor \frac{di}{[L:K]}\bigg\rfloor.$$ For each $i\in \{0,..., [L:K]-1\},$ we have
$$\bigg\lfloor \frac{(d+q[L:K])i}{[L:K]}\bigg\rfloor=\bigg\lfloor \frac{di}{[L:K]}\bigg\rfloor+qi.$$ This implies the Proposition.
\end{proof}\\
\\

As a first result, let us record
\begin{theorem} Let $L/K$ be a finite, separable, and purely wild extension. Then the motivic Zeta function $Z_{\Res_{L/K}\Gm}(z)$ is a rational function. More precisely, it is contained in the subring
$$K_0(\mathrm{Var}_k)\Big[z, \frac{1}{1-\mathbf{L}^az^b}\Big]_{(a,b)\in \Z\times\Z_{>0}\colon a/b=c_{tame}(\Res_{L/K}\Gm)}\subseteq K_0(\mathrm{Var}_k)[\![z]\!].$$ The function $Z_{\Res_{L/K}\Gm}(\mathbf{L}^{-s})$ has a unique pole at $s=c_{tame}(\Res_{L/K}\Gm)$ of order 1. \label{inducedtorizetatheorem}
\end{theorem}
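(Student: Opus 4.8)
The plan is to compute explicitly the two ingredients of $Z_{\Res_{L/K}\Gm}(z)$, namely the motivic classes $[\mathscr{R}(d)_k^{\mathrm{qc}}]$ and the integers $\ord(d)$, and then to sum the resulting series. The key preliminary observation is that, since $L/K$ is purely wild and $K$ is strictly Henselian, the degree $n:=[L:K]$ is a power of $p$; we may assume $L\neq K$, so $n\geq p$. Hence for every $d$ prime to $p$ we have $\gcd(n,d)=1$, so $L':=L\otimes_KK(d)$ is in fact a \emph{field}, totally ramified over $K(d)$ of degree $n$, and $\mathscr{R}(d)$ is the Néron lft-model of $\Res_{L'/K(d)}\Gm$.

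First I would show that $[\mathscr{R}(d)_k^{\mathrm{qc}}]=(\mathbf{L}-1)\mathbf{L}^{n-1}$ for \emph{every} $d$ prime to $p$, independently of $d$. Since Néron lft-models commute with Weil restriction (as recalled at the beginning of this section), $\mathscr{R}(d)=\Res_{\Og_{L'}/\Og_{K(d)}}\mathcal{N}$, where $\mathcal{N}$ is the Néron lft-model of $\Gm$ over $\Og_{L'}$, whose special fibre is $\coprod_{\Z}\Gm_k$ (\cite{BLR}, Chapter~10.1, Example~5). Base-changing to $k$ and using that $A:=\Og_{L'}\otimes_{\Og_{K(d)}}k$ is a local Artinian $k$-algebra of length $n$ with residue field $k$ (so that the charts of $\mathcal{N}$, which are glued only along their generic fibres, become disjoint over $A$), one obtains $\mathscr{R}(d)_k\cong\Res_{A/k}\big(\coprod_{\Z}\Gm_A\big)$. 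Because $A$ is local with residue field $k$, the scheme $\Spec(A\otimes_kR)$ has the same connected components as $\Spec R$ for every $k$-algebra $R$; hence this Weil restriction is $\underline{\Z}_k\times\Res_{A/k}\Gm$, with component group the torsion-free group $\underline{\Z}_k$, so its maximal quasi-compact open subgroup scheme is the identity component $\Res_{A/k}\Gm$. Finally $\Res_{A/k}\Gm$ is the open subscheme of $\Res_{A/k}\mathbf{A}^1_A\cong\mathbf{A}^n_k$ where the coordinate dual to $A\twoheadrightarrow k$ is invertible, so $\Res_{A/k}\Gm\cong\Gm\times\mathbf{A}^{n-1}_k$ as a $k$-variety, which gives the claimed class.

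Next I would bring in the order function. Every integer $d\geq1$ prime to $p$ is uniquely of the form $d=\alpha+qn$ with $q\geq0$ and $\alpha\in\{1,\dots,n-1\}$ prime to $p$; here we use $n=p^a$, so that $p\mid d\iff p\mid\alpha$. By Proposition~\ref{torusorderfunctionproposition} and Corollary~\ref{inducedjumpscorollary},
$$\ord(d)=\ord(\alpha)+qn\,c_{tame}(\Res_{L/K}\Gm),\qquad c_{tame}(\Res_{L/K}\Gm)=\frac{n-1}{2},$$
so in particular $n\,c_{tame}(\Res_{L/K}\Gm)=\tfrac{n(n-1)}{2}\in\Z_{\geq0}$. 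Substituting this and the class $[\mathscr{R}(d)_k^{\mathrm{qc}}]=(\mathbf{L}-1)\mathbf{L}^{n-1}$ into the definition of the motivic Zeta function and regrouping the sum over the pairs $(\alpha,q)$ yields
$$Z_{\Res_{L/K}\Gm}(z)=(\mathbf{L}-1)\mathbf{L}^{n-1}\Bigg(\sum_{\substack{1\leq\alpha\leq n-1\\ p\nmid\alpha}}\mathbf{L}^{\ord(\alpha)}z^{\alpha}\Bigg)\sum_{q\geq0}\big(\mathbf{L}^{n(n-1)/2}z^{n}\big)^{q}=(\mathbf{L}-1)\mathbf{L}^{n-1}\cdot\frac{\sum_{\alpha}\mathbf{L}^{\ord(\alpha)}z^{\alpha}}{1-\mathbf{L}^{n(n-1)/2}z^{n}}.$$
Since $\tfrac{n(n-1)/2}{n}=\tfrac{n-1}{2}=c_{tame}(\Res_{L/K}\Gm)$, the right-hand side lies in the stated subring. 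Putting $z=\mathbf{L}^{-s}$ turns the denominator into $1-\mathbf{L}^{n(c_{tame}(\Res_{L/K}\Gm)-s)}$, so $s=c_{tame}(\Res_{L/K}\Gm)$ is the only possible pole; it has order exactly $1$ because the numerator is a nonzero polynomial in $z$ of degree $\leq n-1<n$ (its coefficient of $z^1$ is $(\mathbf{L}-1)\mathbf{L}^{n-1}$, as $\ord(1)=0$), and $(\mathbf{L}-1)\mathbf{L}^m\neq0$ in $K_0(\mathrm{Var}_k)$ for all $m$ (e.g.\ by base change to $\overline{k}$ and a comparison realization), so the displayed fraction is not a polynomial in $z$ and the factor $1-\mathbf{L}^{n(n-1)/2}z^{n}$ cannot be cancelled.

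The main obstacle is the computation carried out in the second paragraph: one must correctly analyse the special fibre of the Néron lft-model after Weil restriction along the \emph{non-reduced} finite flat $\Og_{K(d)}$-algebra $\Og_{L'}$, determine its group of connected components, identify the maximal quasi-compact open subgroup scheme, and compute the class of $\Res_{A/k}\Gm$. Once this is in place, the rest consists of formal manipulations resting on Proposition~\ref{torusorderfunctionproposition} and Corollary~\ref{inducedjumpscorollary}.
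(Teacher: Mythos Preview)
Your proof is correct and follows essentially the same approach as the paper: identify $\mathscr{R}(d)^{\mathrm{qc}}=\mathscr{R}(d)^0\cong\Res_{\Og_{L'}/\Og_{K(d)}}\Gm$, observe that its special fibre has class $(\mathbf{L}-1)\mathbf{L}^{n-1}$ independently of $d$, and combine this with Proposition~\ref{torusorderfunctionproposition} to sum the series. The only difference is that the paper delegates the final summation and pole analysis to \cite{HN}, Theorem~8.3.1.2 (taken \emph{mutatis mutandis}), whereas you carry out the geometric-series computation and the pole argument explicitly.
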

\begin{proof}
Let $\mathscr{R}(d)$ denote the Néron lft-model of $\Res_{L(d)/K(d)}\Gm.$ Then we have $\mathscr{R}(d)^{\mathrm{qc}}=\mathscr{R}(d)^0\cong \Res_{\Og_{L(d)}/\Og_{K(d)}}\Gm$ for all $d$ not divisible by $p$. Using the previous Lemma, we can take the proof of \cite{HN}, Theorem 8.3.1.2 \it mutatis mutandis \rm, provided we can prove that $[\mathscr{R}(\alpha)^0_k]=[\mathscr{R}(1)^0_k]$ in $K_0(\mathrm{Var}_k)$ for all $\alpha$ prime to $p$. However, this is clear, since $\mathscr{R}(d)^0_k\cong \mathscr{R}(1)^0_k$ as varieties. 
\end{proof}\\
\\
\noindent $\mathbf{Example.}$ Suppose $\mathrm{char} \,k=2,$ and let $L/K$ be a quadratic extension of $K$. In this case, we can calculate $Z_{\Res_{L/K}\Gm}(z)$ explicitly. We have
\begin{align*}
Z_{\Res_{L/K}\Gm}(z)&=\sum_{2\nmid d} [(\Res_{\Og_L(d)/\Og_K(d)}\Gm)\times_{\Og_K(d)} k] \mathbf{L}^{\ord(d)}z^d\\
&=\sum_{2\nmid d} (\mathbf{L}-1)\mathbf{L}^{(d+1)/2}z^d\\
&=\sum_{q=0}^\infty (\mathbf{L}-1)\mathbf{L}^{1+q}z^{1+2q}\\
&=(\mathbf{L}-1)\mathbf{L}z\sum_q (\mathbf{L}z^2)^q\\
&=\frac{(\mathbf{L}-1)\mathbf{L}z}{1-\mathbf{L}z^2}.
\end{align*}
For the second equation we have used that the special fibre of $\Res_{\Og_L(d)/\Og_K(d)}\Gm$ is equal to $\Res_{(k[\epsilon]/\langle \epsilon^2 \rangle) /k}\Gm=\Gm\times_k\mathbf{G}_{\mathrm{a}},$ and that $\ord(d):=\ord_{\Res_{L/K}\Gm}(d)=\frac{d-1}{2}$ for all $d$ prime to 2 by Lemma \ref{inducedjumpslemma}.
\section{The Néron model of the torus $T$}
In this section, we will explicitly construct the Néron model of the torus $$T=(\Res_{L/K}\Gm)/\Gm,$$ where $L/K$ is a finite separable extension. Those tori are of particular interest to us because they arise as toric parts of semiabelian Jacobians. Choose a uniformizer $\pi_{L}$ of $L$, and a unit $c\in \Og_{L}^\times$ such that $\pi_{L}^{[L:K]}=c\pi_K.$ For each $\alpha\in  \Z$ put
$$X_\alpha:=(\Res_{\Og_L/\Og_K}\Gm)/\Gm.$$ Furthermore, let $U_\alpha\subseteq X_\alpha$ be the generic fibre.
Define gluing data as follows: Consider the isomorphism 
$$\phi_{\alpha}\colon U_{\alpha}\to U_{\alpha+1}$$ defined by multiplication by the element $\pi_L$. Also define the isomorphism
$$\psi_{\alpha}\colon X_{\alpha}\to X_{\alpha+[L:K]}$$ defined by multiplication by the element $c$. Compatibility of these isomorphisms is clear, so we do indeed obtain gluing data. Define the group scheme $\mathscr{T}$ to be the resulting scheme together with the evident group structure. This is indeed a Néron model of $T$: First observe that $\mathscr{T}$ is of finite type over $\Og_K$, and that it is a model of $T.$ Hence we can apply \cite{BLR}, Chapter 7.1, Theorem 1, and all we have to show is that the canonical map
$$\mathscr{T}(\Og_K)\to T(K)$$ is bijective. However, this claim follows directly from the construction.  From this, we may deduce the following result about the structure of $\mathscr{T}$: 
\begin{corollary}
Let $L/K$ be a finite separable extension and let $\mathscr{T}$ be the Néron model of the torus $T$ associated to $L$ as above. Let $\mathscr{T}^0$ be the identity component of $\mathscr{T}.$ Then we have\\
(i) the canonical map
$$(\Res_{\Og_L/\Og_K}\Gm)/\Gm\to \mathscr{T}^0$$ is an isomorphism, and\\
(ii) there is an exact sequence \label{torusneroncorollary}
$$0\to \mathscr{T}^0_k\to \mathscr{T}_k\to \Z/[L:K]\Z\to 0.$$
\end{corollary}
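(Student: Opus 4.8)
The plan is to read off the special fibre of $\mathscr{T}$ directly from the gluing construction, identify $\mathscr{T}^0$ with the chart $X_0$, and then compute the group of connected components of $\mathscr{T}_k$ by comparing $\Og_K$-points with $K$-points. For (i), the key observation is that the gluing isomorphisms $\phi_\alpha$ identify only the \emph{generic} fibres of the $X_\alpha$, whereas $\psi_\alpha$ identifies $X_\alpha$ with $X_{\alpha+[L:K]}$ as schemes; hence the special fibres of the charts $X_0,\dots,X_{[L:K]-1}$ remain pairwise disjoint in $\mathscr{T}_k$, so $\mathscr{T}_k$ is the disjoint union of $[L:K]$ copies of $\big((\Res_{\Og_L/\Og_K}\Gm)/\Gm\big)\otimes_{\Og_K}k$. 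I would then argue that this $k$-group scheme is connected: since $L/K$ is totally ramified (the fact already used in the proof of Lemma~\ref{inducedjumpslemma}), the algebra $\Og_L\otimes_{\Og_K}k=\Og_L/\pi_K\Og_L$ is local Artinian, so $\Res_{(\Og_L\otimes_{\Og_K}k)/k}\Gm$ is a smooth connected $k$-group (as a $k$-variety it is a product of $\Gm$ with an affine space), and its quotient by $\Gm$ is again connected. Therefore the $(X_\alpha)_k$ are precisely the connected components of $\mathscr{T}_k$, there are $[L:K]$ of them, and as the identity section lies in $X_0$ the identity component $\mathscr{T}^0$ is the open subgroup scheme $X_0\subseteq\mathscr{T}$, i.e. $(\Res_{\Og_L/\Og_K}\Gm)/\Gm$; this is the content of (i), the canonical map coinciding with the tautological open immersion $X_0\hookrightarrow\mathscr{T}$ because both restrict to $\mathrm{id}_T$ on the schematically dense generic fibre while $\mathscr{T}$ is separated.

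For (ii), I would identify $\pi_0(\mathscr{T}_k)$ with $\mathscr{T}(\Og_K)/\mathscr{T}^0(\Og_K)$: since $\mathscr{T}$ is smooth over the Henselian ring $\Og_K$ and $k$ is separably closed, reduction gives surjections $\mathscr{T}(\Og_K)\twoheadrightarrow\mathscr{T}_k(k)\twoheadrightarrow\pi_0(\mathscr{T}_k)$, and an $\Og_K$-point lies in the kernel of the composite exactly when it factors through the open subgroup $\mathscr{T}^0$. By the Néron mapping property $\mathscr{T}(\Og_K)=T(K)$, and feeding $1\to\Gm\to\Res_{L/K}\Gm\to T\to1$ over $K$ and $1\to\Gm\to\Res_{\Og_L/\Og_K}\Gm\to(\Res_{\Og_L/\Og_K}\Gm)/\Gm\to1$ over $\Og_K$ into fppf cohomology, using $H^1_{\fppf}(K,\Gm)=\Pic(K)=0$ and $H^1_{\fppf}(\Og_K,\Gm)=\Pic(\Og_K)=0$, one obtains $T(K)=L^\times/K^\times$ and, by (i), $\mathscr{T}^0(\Og_K)=\Og_L^\times/\Og_K^\times$, which sits inside $L^\times/K^\times$ as $\Og_L^\times K^\times/K^\times$. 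Hence $\pi_0(\mathscr{T}_k)\cong L^\times/(\Og_L^\times K^\times)$, and the normalized valuation $v_L$ — surjective onto $\Z$ with kernel $\Og_L^\times$, and with $v_L(K^\times)=v_L(\pi_K)\Z=[L:K]\Z$ by total ramification — identifies this quotient with $\Z/[L:K]\Z$. The sequence of (ii) is then the canonical identity-component sequence $0\to\mathscr{T}^0_k\to\mathscr{T}_k\to\pi_0(\mathscr{T}_k)\to0$.

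The only point needing a genuine (if easy) argument is the connectedness of the special fibre of the chart $(\Res_{\Og_L/\Og_K}\Gm)/\Gm$, on which the identification $\mathscr{T}^0=X_0$, and hence everything, rests; after that, (ii) is essentially bookkeeping with $v_L$. A purely geometric variant of (ii) is to observe that the translation of $\mathscr{T}$ by the class of $\pi_L$ in $\mathscr{T}(\Og_K)=T(K)$ is precisely what the gluing maps $\phi_\alpha$ encode, so it permutes the $[L:K]$ charts transitively, exhibiting $\pi_0(\mathscr{T}_k)$ as cyclic of order $[L:K]$.
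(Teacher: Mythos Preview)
Your argument is correct and follows essentially the same route as the paper. For (i) the paper simply says the identification follows ``directly from the construction,'' whereas you supply the connectedness argument for $(X_\alpha)_k$ that makes this precise; for (ii) both the paper and you identify $\mathscr{T}(\Og_K)/\mathscr{T}^0(\Og_K)$ with $(L^\times/K^\times)/(\Og_L^\times/\Og_K^\times)$ via Hensel and the N\'eron mapping property, the paper packaging the final computation as a snake-lemma application to the valuation diagram while you compute $L^\times/(\Og_L^\times K^\times)$ directly via $v_L$---these are the same computation.
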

\begin{proof}
The first statement follows directly from the construction of $\mathscr{T}$ preceding this Corollary. For the second statement, consider the commutative diagram
$$\begin{CD}
0@>>>\Og_K^\times@>>>K^\times@>>>\Z @>>> 0\\
&&@VVV@VVV@VV{\cdot [L:K]}V\\
0@>>> \Og_L^\times @>>> L^\times @>>> \Z @>>> 0.
\end{CD}$$
The cokernels of the first two vertical arrows are $\mathscr{T}^0(\Og_K)$ and $\mathscr{T}(\Og_K),$ respectively. It follows from Hensel's lemma (\cite{JM}, Lemma 1.5) that the morphism $\mathscr{T}(\Og_K)\to \mathscr{T}_k(k)/\mathscr{T}_k^0(k)=\mathscr{T}_k/\mathscr{T}_k^0$ is surjective, and one sees easily that its kernel is precisely $\mathscr{T}^0(\Og_K)$. Now one can use the snake lemma to deduce the claim.
\end{proof}
\section{Preliminaries about push-outs and cohomological flatness in dimension zero}
In the following sections, we will need the notion of \it push-out of schemes. \rm We will always use the categorical definition of push-outs; see \cite{Kash}, p. 46, where the terminology \it fiber coproduct \rm is used. It is well-known that such push-outs do not always exist: For example, it is impossible to contract a line to a point on $\mathbf{P}^2_k$ for any field $k$. However, Schwede \cite{Schw} has shown that push-outs do exist in certain circumstances (see also \cite{Fe}). Throughout this section, we work over an affine base scheme $S=\Spec R,$ where we assume $R$ to be Noetherian. First of all, we have
\begin{lemma} (Schwede) Let $X$ be an affine scheme and let $Y$ be a closed subscheme of $X$. Suppose that there is a morphism $Y\to Z$ to some affine scheme $Z$. Then the push-out $X\cup_Y Z$ (taken in the category of ringed spaces) is an affine scheme. If $A$ and $B$ are rings such that $X=\Spec A,$ $Z=\Spec B$ and $I\subseteq A$ is the ideal defining $Y$, then \label{pushoutexistencelemma}
$$X\cup_YZ\cong\Spec( A\times_{A/I} B).$$ Furthermore, the morphisms $X\to X\cup_YZ$ and $Z\to X\cup_YZ$ are morphisms of schemes, and they turn $X\cup_YZ$ into a push-out in the category of schemes. The induced morphism $X\backslash Y\to (X\cup_YZ)\backslash Z$ is an isomorphism of schemes. 
\end{lemma}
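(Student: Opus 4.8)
The plan is to reduce everything to a computation in commutative algebra via the equivalence between affine schemes and rings. First I would handle the case where $Z$ is affine, which is the substance of the statement. Write $X = \Spec A$, $Z = \Spec B$, and let $I \subseteq A$ be the ideal defining the closed immersion $Y \hookrightarrow X$, so $Y = \Spec(A/I)$. The morphism $Y \to Z$ corresponds to a ring map $B \to A/I$. Form the fibre product of rings $C := A \times_{A/I} B$, i.e. the subring of $A \times B$ consisting of pairs $(a,b)$ with the same image in $A/I$. The claim is that $\Spec C$, together with the obvious maps from $X$ and $Z$, is the push-out.

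The key steps are as follows. First, I would verify that in the category of ringed spaces the underlying topological space of $\Spec C$ is the push-out of the spaces: since $B \to A/I$ need not be surjective this requires a small argument, but one checks directly that $C \to A$ and $C \to B$ realize $\Spec A$ and $\Spec B$ as closed subschemes (or at least affine schemes over $\Spec C$) whose union is all of $\Spec C$ and whose intersection is $\Spec(A/I)$; concretely, the two kernels $0 \times I$ and (the image of) $I \times 0$ inside $C$ intersect in $0$ and $C/(0\times I) \cong B$, $C/(\ker(C\to A)) \cong A$ when $B \to A/I$ is surjective, with the general case handled by noting $A \times_{A/I} B \to A$ has image all of $A$ since any $a$ lifts to $(a, b)$ for any preimage $b$ of $\bar a$ — here one uses surjectivity of $B \to A/I$, which does hold because $Y \to Z$ being arbitrary we may *not* assume it, so instead one argues via the conductor square that $\Spec C$ is set-theoretically $\Spec A \amalg_{\Spec(A/I)} \Spec B$ regardless. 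Second, and this is the heart, I would check the universal property: given a scheme $W$ and morphisms $X \to W$, $Z \to W$ agreeing on $Y$, I must produce a unique $\Spec C \to W$. Cover $W$ by affines; since $\Spec C$ is covered by the images of $X$ and $Z$ and these are affine, one glues, reducing to the affine case, where the universal property of $\Spec C \to W = \Spec D$ is exactly the universal property of the fibre product $C = A \times_{A/I} B$ of rings applied to $D \to A$, $D \to B$. The passage from "push-out in ringed spaces" to "push-out in schemes" is then formal: once we know the ringed-space push-out is a scheme, and that scheme-morphisms out of it are detected by ringed-space morphisms, the two universal properties coincide. Finally, the statement about $X \setminus Y \to (X \cup_Y Z)\setminus Z$ being an isomorphism follows because on the ring side, inverting the ideal $I$ (i.e. localizing away from $V(I)$) kills the fibre product correction: $C$ and $A$ agree on the locus where $I$ is the unit ideal, since there $(a,b) \mapsto a$ becomes bijective.

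The main obstacle I anticipate is the lack of surjectivity of $B \to A/I$: without it, $\Spec C$ is not literally the union of two closed subschemes and the topological analysis is slightly more delicate — one must argue via the Milnor/conductor square that $\Spec C$ is still the pushout of topological spaces and that $\mathcal{O}_{\Spec C}$ is the appropriate fibre product sheaf. This is exactly the content of Schwede's theorem, so I would cite \cite{Schw} for this point and focus the write-up on assembling the universal property. The case of non-affine $Z$ does not arise in the statement (it assumes $Z$ affine), so no gluing of $Z$ is needed; the only gluing is over the target $W$, which is routine.
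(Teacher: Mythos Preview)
The paper's proof is a single line: ``See \cite{Schw}, Theorems 3.4 and 3.5.'' The lemma is explicitly attributed to Schwede and is quoted rather than reproved.

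Your proposal goes further and sketches the actual argument---form the ring fibre product $C = A \times_{A/I} B$, identify the underlying topological push-out, and verify the universal property by reducing to affine targets---which is essentially Schwede's own proof. The overall strategy is sound, but the write-up has slips you would need to fix: the kernel of the projection $C \to B$ is $I \times 0 = \{(a,0): a \in I\}$, not ``$0 \times I$'' (the latter is not even a subset of $C$, since $I \subseteq A$); and your sentence on surjectivity of $B \to A/I$ is garbled and contradicts itself (``which does hold because \ldots\ we may \emph{not} assume it''). You then correctly isolate the non-surjective case as the delicate point and propose to cite \cite{Schw} for it---at which point you have arrived at exactly what the paper does. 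So either tidy the sketch into a genuinely self-contained argument (which amounts to reproducing Schwede's theorem), or simply cite Schwede as the paper does; the hybrid you have now buys little over the bare citation.
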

\begin{proof}
See \cite{Schw}, Theorems 3.4 and 3.5.
\end{proof}\\
If the schemes $X$ and $Z$ are $S$-schemes, then the push-out $X\cup_YZ$ comes with a natural $S$-scheme structure, and $X\cup_YZ$ is the push-out in the category of $S$-schemes. Even if push-outs exist, they can by quite pathological: For example, one can contract a line on $\mathbf{A}^2_k$ to a point over any field $k$, but the resulting scheme is not of finite type over $k$ (see \cite{Schw}, Example 3.7). We will now show that, on the other hand, if one restricts to contracting closed subschemes which are finite over the base, the result will still be of finite type:
\begin{proposition}
Let $X$ be a scheme of finite type over $S$. Let $Y$ be a closed subscheme of $X$ which is contained in an open affine subscheme of $X$, and assume further that $Y$ is finite and faithfully flat over $S$. Then the push-out $X\cup_YS$, taken in the category of ringed spaces, exists and is a scheme of finite type over $S$. Furthermore, the maps $X\to X\cup_YS$ and $S\to X\cup_YS$ turn $X\cup_YS$ into a push-out in the category of schemes. If $X$ is proper over $S$ then so is $X\cup_YS$. \label{pushoutfiniteproposition}
\end{proposition}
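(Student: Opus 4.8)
The plan is to reduce the global statement to the affine case handled by Schwede's Lemma \ref{pushoutexistencelemma} by choosing a suitable affine open that swallows the entire subscheme $Y$, and then to glue. First I would use the hypothesis that $Y$ is contained in an open affine $U\subseteq X$. Since $Y$ is finite over $S$, it is in particular closed in $X$, and $Y\subseteq U$ is a closed subscheme of an affine $S$-scheme of finite type. Write $S=\Spec R$, $U=\Spec A$, and let $I\subseteq A$ be the ideal of $Y$ in $U$; since $Y$ is finite over $S$, $A/I$ is a finite $R$-algebra. By Lemma \ref{pushoutexistencelemma}, the push-out $U\cup_Y S=\Spec(A\times_{A/I} R)$ exists as an affine scheme, the maps from $U$ and from $S$ are scheme morphisms making it the push-out in schemes, and $U\setminus Y\xrightarrow{\sim}(U\cup_Y S)\setminus S$.

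**Gluing.** Next I would construct $X\cup_Y S$ by gluing $U\cup_Y S$ to $X\setminus Y$ along their common open $U\setminus Y$. Concretely: $U\setminus Y$ is open in both $X\setminus Y$ (obviously) and in $U\cup_Y S$ (via the isomorphism $U\setminus Y\xrightarrow{\sim}(U\cup_Y S)\setminus S$, which is open since $S$ is closed in $U\cup_Y S$ — its ideal is $I$ viewed inside $A\times_{A/I}R$). Gluing these two schemes along $U\setminus Y$ produces a scheme $P$ with an open cover by $X\setminus Y$ and $U\cup_Y S$. One checks $P$ carries natural morphisms $X\to P$ (agreeing on $X\setminus Y$ and $U$) and $S\to P$, and that these exhibit $P$ as the push-out: given a test scheme $W$ with compatible maps from $X$ and $S$, the map from $X$ factors through $P$ on $X\setminus Y$ trivially, and on $U$ it factors through $U\cup_Y S$ by the universal property of Lemma \ref{pushoutexistencelemma}; these agree on the overlap, hence glue to a unique $P\to W$. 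This also shows $P$ computes the push-out in ringed spaces, since Lemma \ref{pushoutexistencelemma} gave that locally and the property is local on the target. Finite type of $P$ over $S$ is local on source: $X\setminus Y$ is of finite type over $S$ since $X$ is, and $U\cup_Y S=\Spec(A\times_{A/I}R)$ is of finite type over $R$ because $A$ is a finite-type $R$-algebra and $A/I$ is module-finite over $R$, so the fiber product ring $A\times_{A/I}R$ is a finitely generated $R$-algebra (it contains $R$, and is generated over $R$ by lifts of algebra generators of $A$ together with finitely many module generators of $\ker(A\to A/I)$ paired with $0$; a short computation with the fiber-product description confirms finite generation).

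**Properness.** Finally, suppose $X$ is proper over $S$. I would deduce properness of $P=X\cup_Y S$ as follows. The morphism $q\colon X\to P$ is finite: it is surjective, and it is an isomorphism over $P\setminus S$ while over the affine $U\cup_Y S$ it is the map $\Spec A\to\Spec(A\times_{A/I}R)$, which is module-finite because $A$ is generated as an $A\times_{A/I}R$-module by $1$ together with module generators of $I$ over $R$ (again a direct check from the fiber-product presentation, using that $I$ is a finite $R$-module as $Y$ is finite over $S$). In particular $q$ is proper and surjective. Since $X\to S$ is proper and factors as $X\xrightarrow{q}P\to S$, and $q$ is surjective, the morphism $P\to S$ is proper by the standard descent property of properness along surjective proper morphisms (EGA IV, or \cite{BLR}-style arguments): $P\to S$ is separated and of finite type as established above, and universally closed because for any base change $P'\to S'$, the composite $X'\to P'\to S'$ is closed (base change of proper $X\to S$) and $X'\to P'$ is surjective, forcing $P'\to S'$ closed.

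**Main obstacle.** The genuinely delicate point is the gluing step — specifically, checking carefully that $U\setminus Y$ embeds as an \emph{open} subscheme of $U\cup_Y S$ compatibly with the scheme structure from both sides, so that the gluing is legitimate and the resulting $P$ really satisfies the universal property in the category of schemes (and of ringed spaces) rather than just set-theoretically. Everything else — finite type, the finiteness of $q$, and properness — follows by local computations with the explicit fiber-product ring $A\times_{A/I}R$ and standard descent, using crucially that $Y$ is \emph{finite} over $S$ so that $A/I$ and $I$ are finite $R$-modules. The flatness of $Y$ over $S$ is not needed for this proposition; it will matter later for cohomological flatness.
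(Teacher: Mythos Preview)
Your overall architecture---glue $U\cup_Y S$ to $X\setminus Y$ along $U\setminus Y$, then deduce properness by descent along the finite surjection $q\colon X\to X\cup_Y S$---is sound and close to the paper's. But there is a genuine error in the algebraic core, appearing twice.

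You repeatedly use that $I=\ker(A\to A/I)$ is a \emph{finite $R$-module}. This is false: $Y$ finite over $S$ says $A/I$ is finite over $R$, not $I$. For instance, with $A=R[t]$ and $I=(t)$ one has $A/I\cong R$ finite but $I=\bigoplus_{n\ge 1} R\cdot t^n$ infinite. This undermines both your finite-type argument (``generators of $I$ paired with $0$'') and your finiteness-of-$q$ argument (``$A$ is generated over $A\times_{A/I}R$ by $1$ and module generators of $I$ over $R$''). Your other proposed generators, ``lifts of algebra generators of $A$'', are also problematic: an element $a_i\in A$ lifts to $A\times_{A/I}R$ only if $\bar{a_i}$ lies in the image of $R\to A/I$, which you have not arranged.

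The fix is exactly the paper's: pick $\alpha_1,\dots,\alpha_m\in A$ whose images generate $A/I$ as an $R$-module. For any $f\in A$ one finds $r_i\in R$ with $f-\sum r_i\alpha_i\in I$; since both $I$ and $R$ lie in the image of $A\times_{A/I}R\to A$, the set $\{1,\alpha_1,\dots,\alpha_m\}$ generates $A$ as a module over $A\times_{A/I}R$. This simultaneously gives finiteness of $q$ and, via the Artin--Tate lemma, finite type of $A\times_{A/I}R$ over $R$. Note that Artin--Tate requires $A\times_{A/I}R\hookrightarrow A$, i.e.\ injectivity of $R\to A/I$; this is where the paper uses faithful flatness of $Y$ over $S$, so your closing remark that flatness is irrelevant here is not quite right.

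For properness the paper takes a slightly different route than yours: it gets universal closedness from surjectivity of $q$ exactly as you do, but establishes separatedness directly via the valuative criterion rather than invoking descent of properness along proper surjections. Either works once $q$ is known to be finite.
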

\begin{proof}
Let us first prove the first two statements. To see that the push-out exists, we may cover $X$ with open affine subschemes, with one containing $Y$, and the other ones intersecting $Y$ trivially. Then we may consider the push-out of our affine neighbourhood of $Y$ along the morphism $Y\to S.$ Since the other affine open subschemes remain unaffected, we can glue them together to obtain the desired push-out, using the previous Theorem. To prove that the resulting scheme is of finite type over $S$, one immediately reduces to the case where $X=\Spec A$ for some $R=\Gamma(S, \Og_S)$-algebra $A$ of finite type. Let $I\subseteq A$ denote the ideal defining $Y$. Let us show that $A$ is finitely generated as a module over $A\times_{A/I} R.$ Let $f\in A$ and pick elements $\alpha_1,..., \alpha_m$ of $A$ such that their images generate $A/I$ as an $R$-module. Then there exist elements $r_1,..., r_m\in R$ such that $f-\sum_i r_i\alpha_i\in I.$ But both $R$ and $I$ clearly lie in the image of the canonical map $A\times_{A/I}R\to A$. Therefore, the set $\{1,\alpha_1,...,\alpha_m\}$ generates $A$ as a module over $A\times_{A/I} R.$ Note also that the map $A\times_{A/I}R\to A$ is injective since the morphism $R\to A/I$ is faithfully flat, and hence injective, by assumption. Since $R$ is Noetherian by assumption, we may apply the Lemma of Artin and Tate (\cite{AM}, Proposition 7.8) to deduce that $A\times_{A/I}R$ is finitely generated as an algebra over $R$. Hence the first two statements follow.
Now suppose that $X$ is proper over $S$. Since the morphism $X\to X\cup_YS$ is surjective (this follows because the morphism $X\to X\cup_Y S$ is scheme-theoretically dominant and finite), and since surjectivity is stable under base change, it follows that the morphism $X\cup_YS\to S$ is universally closed. We already know from the first part that $X\cup_YS$ is of finite type over $S$, so all that remains to be shown is that $X\cup_YS$ is separated over $S$. To show separatedness, we use the valuative criterion. Let $\mathcal{R}$ be a discrete valuation ring over $S$ with field of fractions $\mathcal{F}$. Suppose we have two $S$-morphisms $\varphi_i\colon\Spec\mathcal{R} \to X\cup_YS$ ($i=1,2$) which restrict to the same morphism $\Spec\mathcal{F}\to X\cup_YS.$ Suppose first that this morphism factors through the closed immersion $S\to X\cup_YS.$ Then so do the $\varphi_i$, so we immediately obtain $\varphi_1=\varphi_2$. On the other hand, if $\Spec \mathcal{F}\to X\cup _YS$ does not factor through $S\to X\cup_YS$, then there is a unique morphism $\varphi \colon \Spec\mathcal{F}\to X$ lifting this morphism. However, since $X\to X\cup_YS$ is finite, it is in particular proper, so we can lift the morphisms $\varphi_i$ to morphisms $\phi_i\colon \Spec\mathcal{R}\to X$ extending $\varphi.$ Since $X$ is proper over $S$, it is also separated over $S$, which implies that the two lifts $\phi_i$ coincide. Hence $\varphi_1=\varphi_2.$
\end{proof}
\begin{lemma}
Now let $S=\Spec R$, and suppose that $A$ is an $R$-algebra. Suppose the ideal $I\subseteq A$ is such that $A/I$ and $(A/I)/R$ are both finitely generated flat $R$-modules and such that the map $R\to A/I$ is injective. Then forming $A\times_{A/I} R$ commutes with arbitrary base change. That is, for any $R$-algebra $D$, the canonical morphism 
$$(A\times_{A/I} R)\otimes_R D\to (A\otimes_RD)\times_{(A\otimes_RD)/I_D} D$$ is an isomorphism, where $I_D:=\ker(A\otimes_DR\to (A/I)\otimes_RD).$ \label{basechangelemma}
\end{lemma}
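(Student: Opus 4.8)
The plan is to reduce everything to an elementary module-theoretic computation with short exact sequences, exploiting the flatness hypotheses to control the relevant Tor-terms. First I would observe that the hypotheses give two short exact sequences of $R$-modules: from the definition of $I$ together with the injectivity of $R\to A/I$, the sequence $0\to I\to A\to A/I\to 0$, and from the assumption that $(A/I)/R$ is flat, the sequence $0\to R\to A/I\to (A/I)/R\to 0$ with both $R$ and $(A/I)/R$ flat. The fiber product $A\times_{A/I}R$ sits inside $A$ as the preimage of $R\subseteq A/I$, so there is an exact sequence $0\to A\times_{A/I}R\to A\to (A/I)/R\to 0$; here the surjectivity on the right uses that $A\to A/I$ is surjective. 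Since $(A/I)/R$ is $R$-flat, tensoring this last sequence with any $R$-algebra $D$ keeps it exact, giving $0\to (A\times_{A/I}R)\otimes_R D\to A\otimes_R D\to ((A/I)/R)\otimes_R D\to 0$.

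Next I would run the same argument on the base-changed side. I need to identify $((A/I)/R)\otimes_R D$ with $(A_D/I_D)/D$, where $A_D:=A\otimes_R D$ and $I_D:=\ker(A_D\to (A/I)\otimes_R D)$. By right-exactness of tensor product the map $A_D\to (A/I)\otimes_R D$ is surjective, so $A_D/I_D\cong (A/I)\otimes_R D$ canonically; and tensoring $0\to R\to A/I\to (A/I)/R\to 0$ with $D$ (again using flatness of $(A/I)/R$, so that the injectivity of $R\to A/I$ is preserved) shows $D\to (A/I)\otimes_R D$ is injective with cokernel $((A/I)/R)\otimes_R D$. Hence $(A_D/I_D)/D\cong ((A/I)/R)\otimes_R D$, and the fiber product $A_D\times_{A_D/I_D}D$ fits into $0\to A_D\times_{A_D/I_D}D\to A_D\to (A_D/I_D)/D\to 0$. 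Comparing this with the exact sequence from the previous paragraph via the canonical base-change map yields a commutative diagram of short exact sequences in which the right vertical map is the identity on $((A/I)/R)\otimes_R D$ and the middle vertical map is the identity on $A\otimes_R D$; the five lemma (or a direct diagram chase) then forces the left vertical arrow $(A\times_{A/I}R)\otimes_R D\to A_D\times_{A_D/I_D}D$ to be an isomorphism.

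The main obstacle, and the only place where the hypotheses are really used, is ensuring that all the relevant sequences remain exact after $\otimes_R D$: this is precisely where I need the flatness of $(A/I)/R$ (to preserve injectivity of $R\hookrightarrow A/I$ and of $A\times_{A/I}R\hookrightarrow A$) and it is worth checking that no finiteness is actually needed for this particular statement — the hypothesis that $A/I$ and $(A/I)/R$ are finitely generated is harmless but the argument only consumes flatness. I would also be slightly careful about the one subtlety in identifying the kernel of $A\to (A/I)/R$ with $A\times_{A/I}R$: an element of $A$ maps to $0$ in $(A/I)/R$ iff its image in $A/I$ lies in the image of $R$, i.e. iff it comes from $A\times_{A/I}R$, which holds on the nose because $R\to A/I$ is injective, so the fiber product is literally $\{a\in A : \bar a\in R\}$. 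Once these exactness points are in place the isomorphism is formal.
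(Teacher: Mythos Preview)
Your argument is correct and follows essentially the same route as the paper: both proofs construct the exact sequence $0\to A\times_{A/I}R\to A\to (A/I)/R\to 0$, tensor with $D$ using that $(A/I)/R$ is $R$-flat, identify $(A_D/I_D)/D$ with $((A/I)/R)\otimes_R D$ via the flatness hypotheses, and conclude by comparing kernels. Your remark that only flatness (not finite generation) is consumed by the argument is accurate and worth noting.
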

\begin{proof}
Put $A':=A\times_{A/I} R$ and consider the exact sequence
$$0\to A'\to A\to (A/I)/R\to 0$$ of $R$-modules. Let $D$ be an arbitrary $R$-algebra. Using our assumption on $(A/I)/R$, we obtain an exact sequence
$$0=\mathrm{Tor}^R_1((A/I)/R, D)\to A'\otimes_{R}D\to A\otimes_RD\to ((A/I)/R)\otimes_RD\to 0.$$
We also have an exact sequence
$$0\to (A\otimes_RD) \times_{A\otimes_RD/I_D}D\to A\otimes_RD\to (A\otimes_RD/I_D)/D\to 0.$$ Using the flatness assumptions from the lemma repeatedly, we obtain isomorphisms
$$(A\otimes_RD/I_D)/D\cong ((A/I)\otimes_RD)/D\cong ((A/I)/R)\otimes_RD.$$ This implies that $A'\otimes_RD$ is the kernel of the map $A\otimes_RD\to (A\otimes_RD/I_D)/D,$ so the claim follows.
\end{proof}\\
\\
\noindent $\mathbf{Remark}.$ The requirement that both $A/I$ and $(A/I)/R$ be flat over $R$ in Lemma \ref{basechangelemma} is essential for the validity of the statement. Consider, for example, the case $R=\Og_K$, $A:=\Og_K[t]$, and the closed subscheme $Y:=\{0_k,1_k\}\subseteq \A^1_{\Og_K}.$ Suppose further that $Y$ is given by the ideal $I$. Then $(A/I)/R\cong k$ as $\Og_K$-modules. We find that the kernel of $(A\times_{A/I} k)\otimes_{\Og_K}k\to A\otimes_{\Og_K} k$ is given by $\mathrm{Tor}^{\Og_K}_1(k,k)=k$, which means that push-out and base change do not commute in this case. (Geometrically, $\Spec( A\times_{A/I} k)$ is the scheme obtained by identifying two points on the special fibre of $\A^1_{\Og_{K}}$.) Furthermore, we observe that $(A\times_{A/I} k)\otimes_{\Og_K} k$ is not reduced. To see this, note that we have an isomorphism
$$(A\times_{A/I} k)/\pi_K(A\times_{A/I} k)\cong (A\times_{A/I} k)\otimes_{\Og_K} k.$$ The ring $A\times_{A/I} k$ is explicitly given by the set of all polynomials $\sum_i a_it^i$ with the property that $\sum_{i\geq 1} a_i $ is divisible by $\pi_K$, and the ring structure inherited from $\Og_K[t].$ In particular, we see that $f:=\pi_K t-\pi_K\in A\times_{A/I} k.$ However, $f\not\in \pi_K\cdot(A\times_{A/I} k)$, which implies that the element of $(A\times_{A/I} k)\otimes_{\Og_K}k$ given by $f\otimes1$ is non-zero. On the other hand, a simple calculation shows that $f^2\in \pi_K \cdot(A\times_{A/I} k)$, so $(f\otimes 1)^2=0.$
\begin{lemma}
Keep the assumptions and the notation of the previous lemma, and moreover assume that $A/I$ and $(A/I)/R$ are projective $R$-modules. Let $D$ be an $A'$-algebra. Then the morphism \label{basechangelemmaII}
$$D\to (A\otimes_{A'}D)\times_{(A/I)\otimes_{A'}D}(R\otimes_{A'}D)$$ is surjective. If $D$ is a flat $A'$-algebra, then this map is an isomorphism. This Lemma remains valid even without assuming that $R$ be Noetherian.  
\end{lemma}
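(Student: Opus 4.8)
The plan is to realise the target fibre product as the kernel of a difference map and then apply right-exactness of $-\otimes_{A'}D$. First I would record the short exact sequence of $A'$-modules
$$0\to A'\to A\oplus R\xrightarrow{\ (a,r)\mapsto \bar a-\bar r\ } A/I\to 0,$$
where $A'=A\times_{A/I}R$ is regarded as a subring of $A\times R$, the map $R\to A/I$ is the composite $R\to A\to A/I$, and surjectivity on the right holds because $A\to A/I$ is already surjective. Here one must check that the difference map is $A'$-linear; this is a one-line computation from the defining condition $\overline{a'_A}=\overline{a'_R}$ for an element $a'=(a'_A,a'_R)\in A'$. That the kernel is exactly $A'$ is immediate from the definition of the fibre product.

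Next I would tensor this sequence with $D$ over $A'$. Since $-\otimes_{A'}D$ is right exact, commutes with finite direct sums, and $A'\otimes_{A'}D=D$, one obtains an exact sequence
$$D\xrightarrow{\ \mu\ }(A\otimes_{A'}D)\oplus(R\otimes_{A'}D)\xrightarrow{\ \delta\ }(A/I)\otimes_{A'}D\to 0,$$
where $\delta$ is again the difference of the two natural maps. The key observation is that the fibre product ring $(A\otimes_{A'}D)\times_{(A/I)\otimes_{A'}D}(R\otimes_{A'}D)$ is, as a $D$-module, precisely $\Ker\delta$, and that $\mu$ is exactly the canonical comparison morphism of the statement. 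Exactness at the middle term then gives $\im\mu=\Ker\delta$, i.e.\ $\mu$ is surjective onto the fibre product, which is the first assertion.

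For the second assertion I would pass to the long exact $\mathrm{Tor}^{A'}$-sequence attached to the short exact sequence above: the kernel of $\mu$ equals the image of the connecting homomorphism $\mathrm{Tor}_1^{A'}(A/I,D)\to A'\otimes_{A'}D=D$. Hence if $D$ is flat over $A'$ then $\mathrm{Tor}_1^{A'}(A/I,D)=0$, so $\mu$ is injective and therefore an isomorphism. No step invokes the Noetherian hypothesis (nor, in fact, the projectivity or injectivity assumptions, which are inherited from the ambient setup), so the lemma holds in the stated generality. The only genuine care required is the bookkeeping in the first step: one must verify that the structure maps $A'\hookrightarrow A\times R$, $A'\to A$, $A'\to R$ and $A'\to A/I$ are mutually compatible, so that $\delta$ is well defined and $A'$-linear and $\mu$ really is the canonical morphism. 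I expect this compatibility check to be the main — though quite modest — obstacle.
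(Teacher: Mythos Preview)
Your argument is correct. Both the surjectivity and the flat case go through exactly as you describe: the short exact sequence $0\to A'\to A\oplus R\to A/I\to 0$ of $A'$-modules has the fibre product as the kernel of the tensored difference map, and right-exactness of $-\otimes_{A'}D$ gives surjectivity onto that kernel, while $\mathrm{Tor}_1^{A'}(A/I,D)=0$ in the flat case gives injectivity.

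Your route is genuinely different from the paper's. The paper argues by element-chasing: it first uses the projectivity of $(A/I)/R$ over $R$ to see that $R\otimes_{A'}D\to (A/I)\otimes_{A'}D$ is a split injection, whence the projection from the fibre product to $A\otimes_{A'}D$ is injective. It then shows directly that any $x\in A\otimes_{A'}D$ whose image in $(A/I)\otimes_{A'}D$ comes from $R\otimes_{A'}D$ is of the form $1\otimes\delta$, using that $A'\to R$ is surjective (so elements of $R\otimes_{A'}D$ are elementary tensors) and that $I\subseteq A'$. For injectivity in the flat case it invokes the injectivity of $A'\hookrightarrow A$. Your homological approach is cleaner and, as you observe, does not actually use the projectivity hypotheses on $A/I$ and $(A/I)/R$; those enter the paper's proof only to make the element-chase work via the embedding of the fibre product into $A\otimes_{A'}D$. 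So your argument is both shorter and slightly more general for this particular lemma, while the paper's is more hands-on and makes explicit where the elements in the fibre product come from.
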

\begin{proof}
First note that the sequence of $A'$-modules
$$0\to R\to A/I\to (A/I)/R\to 0$$ is split exact, so the map $(A\otimes_{A'}D)\times_{(A/I)\otimes_{A'}D}(R\otimes_{A'}D)\to A\otimes_{A'}D$ is injective. Therefore, in order to prove the first claim, all we have to show is that an element $x\in A\otimes_{A'}D$ with the property that its image in $(A/I)\otimes_{A'}D$ comes from $R\otimes_{A'}D$ is of the form $x=1\otimes\delta$ for some $\delta\in D$. Choose $x\in A\otimes_{A'}D$ with this property, and denote its image in $(A/I)\otimes_{A'}D$ by $\overline{x}$. We find that 
$$\overline{x}-1\otimes \delta'=0$$ for some $\delta'\in D$. This is because we assume that the image of $x$ in $(A/I)\otimes_{A'}D$ comes from $R\otimes_{A'}D$. Since the map $A'\to R$ is surjective, every element of $R\otimes_{A'} D$ is an elementary tensor. We find that 
$$x-1\otimes\delta'\in \im(I\otimes_{A'}D\to A\otimes_{A'}D).$$ However, since $I\subseteq A'$, any element of $\im(I\otimes_{A'}D\to A\otimes_{A'}D)$ is of the form $1\otimes\delta$ for some $\delta\in D$. Putting things together, we find
$$x=1\otimes(\delta+\delta').$$
For the second statement, note that flatness implies that the map $D\to A\otimes_{A'}D,$ and hence the map from the Lemma, is injective. Also observe that, in this proof, we did not use the hypothesis that $R$ be Noetherian. 
\end{proof}
\begin{proposition}
Let $R$ be a Noetherian commutative ring with unity, and let $\tilde{\mathscr{C}}$ be a proper scheme over $R$. Let $Y$ be a closed subscheme of $\tilde{\mathscr{C}}$, finite and faithfully flat over $R$, which is contained in an open affine subscheme $\tilde{U}_0=\Spec \tilde{A}$ of $\tilde{\mathscr{C}}$, and given by the ideal $\tilde{I}$. Assume that $\tilde{A}/\tilde{I}$ and $(\tilde{A}/\tilde{I})/R$ are projective $R$-modules. Then the push-out
$$\mathscr{C}:=\tilde{\mathscr{C}}\cup_{Y}\Spec R$$ exists in the category of schemes over $R$ and is proper over $R$. Furthermore, let $X$ be an affine $R$-scheme, and put $\mathscr{C}_X:=\mathscr{C}\times_RX,$ and similarly for $\tilde{\mathscr{C}}.$ Also let $V\to \mathscr{C}_X$ be a flat morphism with $V$ affine. Put $\tilde{V}:=V\times_{\mathscr{C}_X}\tilde{\mathscr{C}}_X.$ Then the diagram
$$\begin{CD} \tilde{V}@>>>V\\
@AAA@AAA\\
V\times_{\mathscr{C}_X}(X\times_R\Spec(\tilde{A}/\tilde{I}))@>>>V\times_{\mathscr{C}_X}X
\end{CD}$$
is co-Cartesian in the category of affine schemes over $R$. \label{cocartesianproposition}
\end{proposition}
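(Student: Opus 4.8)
The plan is to reduce the statement to a purely affine, ring-theoretic assertion and then apply Lemma~\ref{basechangelemmaII}. First I would record the structure of $\mathscr{C}$ and $\mathscr{C}_X$: by Lemma~\ref{pushoutexistencelemma} (together with the gluing argument from Proposition~\ref{pushoutfiniteproposition}), the affine chart $\tilde U_0$ of $\tilde{\mathscr{C}}$ containing $Y$ gives an affine chart $U_0 = \Spec(\tilde A\times_{\tilde A/\tilde I} R)$ of $\mathscr{C}$, and $\mathscr{C}\setminus\Spec R \cong \tilde{\mathscr{C}}\setminus Y$. Base-changing along the affine scheme $X=\Spec R'$ over $R$, and using Lemma~\ref{basechangelemma} (whose hypotheses hold because $\tilde A/\tilde I$ and $(\tilde A/\tilde I)/R$ are projective, hence flat), we get $\mathscr{C}_X$ as the push-out $\tilde{\mathscr{C}}_X\cup_{Y_X}X$, with affine chart $\Spec(\tilde A_X\times_{(\tilde A/\tilde I)_X} R')$ where $\tilde A_X=\tilde A\otimes_R R'$ etc.

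Next, since $V\to\mathscr{C}_X$ is a morphism from an affine scheme, and since $V\times_{\mathscr{C}_X}X$ is a closed subscheme of $V$ (being the pullback of the closed immersion $X\hookrightarrow\mathscr{C}_X$), the whole diagram lands inside the affine chart situation: I would let $A' = \tilde A_X\times_{(\tilde A/\tilde I)_X} R'$ (the ring of the affine chart of $\mathscr{C}_X$) and $D = \Gamma(V,\Og_V)$, an $A'$-algebra; then $\tilde V = V\times_{\mathscr{C}_X}\tilde{\mathscr{C}}_X$ has ring of global sections $\tilde A_X\otimes_{A'}D$, the scheme $V\times_{\mathscr{C}_X}(X\times_R\Spec(\tilde A/\tilde I))$ has ring $(\tilde A/\tilde I)_X\otimes_{A'}D$, and $V\times_{\mathscr{C}_X}X$ has ring $R'\otimes_{A'}D$. (Here one uses that, over the complement of $Y_X$ in $\tilde{\mathscr{C}}_X$, the map $\tilde{\mathscr{C}}_X\to\mathscr{C}_X$ is an isomorphism, so the fibre products behave as expected; and that $\tilde A/\tilde I$, being projective over $R$, stays projective over $R'$, so Lemma~\ref{basechangelemmaII} applies with base ring $R'$.) Saying the diagram of schemes is co-Cartesian in affine $R$-schemes is exactly saying that the induced diagram of rings
$$
\begin{CD}
\tilde A_X\otimes_{A'}D @<<< (\tilde A/\tilde I)_X\otimes_{A'}D\\
@AAA @AAA\\
R'\otimes_{A'}D @<<< D
\end{CD}
$$
is Cartesian, i.e.\ that $D\xrightarrow{\ \sim\ }(\tilde A_X\otimes_{A'}D)\times_{(\tilde A/\tilde I)_X\otimes_{A'}D}(R'\otimes_{A'}D)$.

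The heart of the proof is therefore just an application of Lemma~\ref{basechangelemmaII} to the ring $A'$ and the $A'$-algebra $D$: the surjectivity half of that lemma gives the comparison map is surjective in general, and the flatness of $V\to\mathscr{C}_X$ — which means exactly that $D$ is a flat $A'$-algebra — upgrades this to an isomorphism. I would be slightly careful to check that the projectivity hypotheses transfer along $R\to R'$ and that $A'$ here really is the fibre product $\tilde A_X\times_{(\tilde A/\tilde I)_X}R'$ (this is Lemma~\ref{basechangelemma}), which is what licenses the application of Lemma~\ref{basechangelemmaII}. The main obstacle I anticipate is bookkeeping rather than mathematical depth: one has to be careful that ``co-Cartesian in the category of affine schemes over $R$'' is the right dualization (every arrow reverses; a push-out square of affine schemes is a pull-back square of rings), and that all four fibre products in the square are in fact affine — which follows because $V$ is affine, $X\to\mathscr{C}_X$ and $X\times_R\Spec(\tilde A/\tilde I)\to\mathscr{C}_X$ are affine morphisms (closed immersion, resp.\ finite), and $\tilde{\mathscr{C}}_X\to\mathscr{C}_X$ is affine over the relevant chart. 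Once that is pinned down, the statement is immediate from the lemmata already proved.
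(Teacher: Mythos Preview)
Your overall strategy---reduce to a ring-theoretic statement and invoke Lemma~\ref{basechangelemmaII}---is exactly the paper's. However, there is a genuine gap in your reduction step. You set $D=\Gamma(V,\Og_V)$ and treat it as an $A'$-algebra (with $A'=\tilde A_X\times_{(\tilde A/\tilde I)_X}R'$ the coordinate ring of the affine chart $U_{0,X}\subseteq\mathscr{C}_X$), and then claim $\Gamma(\tilde V)=\tilde A_X\otimes_{A'}D$. But the flat morphism $V\to\mathscr{C}_X$ need not factor through the affine open $\Spec A'$: the scheme $\mathscr{C}_X$ is not affine, and a general affine $V$ flat over it will meet several charts. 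There is therefore no ring homomorphism $A'\to D$, and the tensor-product description of $\Gamma(\tilde V)$ is not valid. The sentence ``the whole diagram lands inside the affine chart situation'' is precisely where the argument breaks; the fact that $V\times_{\mathscr{C}_X}X$ is closed in $V$ does not force $V$ into the chart containing $X$.

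The paper repairs this by passing to an open cover. After using Lemma~\ref{basechangelemma} to reduce to $X=\Spec R$, one chooses affine opens $U_0,\dots,U_m$ of $\mathscr{C}$ with $U_0=\Spec(\tilde A\times_{\tilde A/\tilde I}R)$ and $U_i$ disjoint from the image of $\Spec R$ for $i>0$, and sets $V_i:=V\times_{\mathscr{C}}U_i$; these are affine because $V$ is affine and $\mathscr{C}$ is separated. Now $\Gamma(V_0)$ genuinely is a flat $A'$-algebra, so Lemma~\ref{basechangelemmaII} gives the co-Cartesian square for $V_0$. Since the closed subschemes $\Spec R$ and $\Spec(\tilde A/\tilde I)$ of $\mathscr{C}$ lie entirely in $U_0$, one has $V\times_{\mathscr{C}}\Spec R=V_0\times_{U_0}\Spec R$ (and likewise for $\tilde A/\tilde I$), so the bottom row of the global square coincides with that of the $V_0$-square; and the $\tilde V_i$ (with $\tilde V_i\cong V_i$ for $i>0$) cover $\tilde V$, so a short sheaf-gluing argument assembles the global statement from the local one. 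This is exactly the ``bookkeeping'' you anticipated, but it cannot be circumvented by declaring $D$ to be an $A'$-algebra.
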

\begin{proof}
The first part of the Proposition follows from Proposition \ref{pushoutfiniteproposition}. For the second statement, choose an open affine covering $\tilde{U}_0,..., \tilde{U}_m$ of $\tilde{\mathscr{C}}$ with $\tilde{U}_0=\Spec\tilde{A},$ such that $\tilde{U}_i\cap Y=\emptyset$ for $i>0.$ By construction, we obtain an open affine covering $U_0,..., U_m$ of $\mathscr{C}$ with $U_0=\Spec (\tilde{A}\times_{\tilde{A}/\tilde{I}} R)$ and $\tilde{U}_i\cong  U_i$ as schemes for $i>0.$ By Lemma \ref{basechangelemma}, we may assume that $X=\Spec R.$ For each $i$, let $V_i$ be the inverse image of $U_i$ in $V$. Because $V$ and $\mathscr{C}$ are separated over $\Og_K$, the $V_i$ are affine schemes. From Lemma \ref{basechangelemmaII}, we know that the diagram
$$\begin{CD}\tilde{V}_0@>>> V_0\\
@AAA@AAA\\
V_0\times_{U_0} (\Spec \tilde{A}/\tilde{I})@>>>V_0\times_{U_0}\Spec R
\end{CD}$$
is co-Cartesian in the category of affine schemes over $R,$
where $\tilde{V}_i:=V_i\times_{U_i}\tilde{U}_i.$ But since $V_0\times_{U_0}\Spec R\cong V\times_{\mathscr{C}}\Spec R$ (and similarly for $\Spec (\tilde{A}/\tilde{I}$)), and the $\tilde{V}_i$ cover $\tilde{V}$, the result follows. 
\end{proof}\\
We shall also frequently employ the notion of \it cohomological flatness in dimension zero. \rm Recall that a morphism $f\colon X\to S$ is said to be cohomologically flat in dimension zero if for any $S$-scheme $s\colon S'\to S,$ the canonical morphism $$s^\ast f_\ast\Og_X\to (f\times_S\Id_{S'})_\ast\Og_{X\times_SS'}$$ is an isomorphism. In the case where $S=\Spec \Og_K$ and $f\colon X\to S$ is flat and proper, one has a relatively straightforward (and well-known) criterion for cohomological flatness in dimension zero:
\begin{proposition}
Let $f\colon X\to \Spec\Og_K$ be proper and flat. Then $f$ is cohomologically flat in dimension zero if and only if $H^1(X, \Og_X)$ is torsion-free. \label{cohomologicallyflatproposition}
\end{proposition}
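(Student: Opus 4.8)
The plan is to use the standard cohomology-and-base-change machinery together with the fact that, over a discrete valuation ring, a finitely generated module is torsion-free if and only if it is flat. Write $\mathcal{O}_K$ with uniformizer $\pi$, residue field $k$, and fraction field $K$.

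First I would recall that by the theory of cohomology and base change (see e.g.\ \cite{BLR}, Chapter~8.3, or the standard reference in \cite{Hartshorne}), since $f\colon X\to\Spec\mathcal{O}_K$ is proper and flat and the base is Noetherian, the function $t\mapsto \dim_{k(t)} H^0(X_t,\mathcal{O}_{X_t})$ is upper semicontinuous, and cohomological flatness in dimension zero is equivalent to the statement that $f_\ast\mathcal{O}_X$ is locally free and its formation commutes with base change. Over the local base $\Spec\mathcal{O}_K$ this amounts to: $f_\ast\mathcal{O}_X$ is a free $\mathcal{O}_K$-module and the natural map $(f_\ast\mathcal{O}_X)\otimes_{\mathcal{O}_K}k\to H^0(X_k,\mathcal{O}_{X_k})$ is an isomorphism. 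The key input is the Grothendieck complex: there is a bounded complex $0\to M^0\to M^1\to\cdots$ of finitely generated free $\mathcal{O}_K$-modules computing $Rf_\ast\mathcal{O}_X$ universally, i.e.\ $H^i(X\times_{\mathcal{O}_K} N,\mathcal{O})=H^i(M^\bullet\otimes_{\mathcal{O}_K}N)$ for every $\mathcal{O}_K$-module $N$.

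The main step is then a purely homological one about the two-term situation. Consider the map $d\colon M^0\to M^1$. One has $f_\ast\mathcal{O}_X=H^0(M^\bullet)=\ker d$ and $H^1(X,\mathcal{O}_X)=H^1(M^\bullet)=\coker d$ (after truncating; if the complex is longer one restricts attention to the bottom, using that $H^0$ and $H^1$ only see $M^0\to M^1\to M^2$, and $\ker(M^1\to M^2)$ is a free module since $\mathcal{O}_K$ is a PID, so one may replace $M^1$ by it). Tensoring the short exact sequence $0\to\ker d\to M^0\xrightarrow{\bar d}\im d\to 0$ with $k$ gives
\[
0\to \mathrm{Tor}_1^{\mathcal{O}_K}(\im d,k)\to (\ker d)\otimes k\to M^0\otimes k\to (\im d)\otimes k\to 0,
\]
and $\mathrm{Tor}_1^{\mathcal{O}_K}(\im d,k)$ is exactly the $\pi$-torsion of $\im d$, which vanishes since $\im d\subseteq M^1$ is a submodule of a free module hence torsion-free. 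Therefore $(\ker d)\otimes k\hookrightarrow M^0\otimes k$ always, and the base change map $(f_\ast\mathcal{O}_X)\otimes k\to H^0(X_k,\mathcal{O}_{X_k})=\ker(M^0\otimes k\to M^1\otimes k)$ is injective in general; it is an isomorphism precisely when $(\im d)\otimes k\to M^1\otimes k$ is injective, i.e.\ when $\mathrm{Tor}_1^{\mathcal{O}_K}(\coker d,k)=0$, i.e.\ when $\coker d=H^1(X,\mathcal{O}_X)$ is torsion-free (note $\ker d$, being a submodule of the free module $M^0$, is automatically free, so no separate freeness issue arises). Conversely, if $H^1(X,\mathcal{O}_X)$ has torsion, the same Tor term is nonzero and base change to $k$ already fails, so $f$ is not cohomologically flat in dimension zero. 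For an arbitrary base change $S'\to\Spec\mathcal{O}_K$ one reduces to the residue field case: cohomological flatness is checked by the complex $M^\bullet$, and $\ker d$ free together with $\im d$ a direct summand after inverting the torsion gives commutation with all base changes once it holds over $k$; more directly, $f_\ast\mathcal{O}_X$ locally free with formation commuting with the single base change $\mathcal{O}_K\to k$ implies, by Nakayama and the semicontinuity dimension count, that it commutes with every base change.

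The step I expect to be the real crux is organizing the reduction so that only the bottom of the Grothendieck complex matters — i.e.\ justifying that one may assume $M^\bullet$ is concentrated in degrees $0,1$ (or that $\ker(M^1\to M^2)$ is free and may replace $M^1$), and checking that $\ker d$ is free — these are where one uses that $\mathcal{O}_K$ is a principal ideal domain (every submodule of a finite free module is free, and torsion-free equals free for finitely generated modules). Everything else is the formal identification $\mathrm{Tor}_1^{\mathcal{O}_K}(N,k)=N[\pi]$ and unwinding definitions. I would also remark that the \emph{only if} direction is the one typically cited in applications: it says that to build a model which is cohomologically flat in dimension zero it suffices to kill the torsion in $H^1(\mathcal{O})$, which is exactly how this Proposition will be used later in the paper for the non-normal models of curves with push-out singularities.
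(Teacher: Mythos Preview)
Your argument is correct, but it takes a different route from the paper's. You invoke the full Grothendieck-complex machinery, truncate to a two-term complex of free $\mathcal{O}_K$-modules using that $\mathcal{O}_K$ is a PID, and then identify the obstruction to base change as $\mathrm{Tor}_1^{\mathcal{O}_K}(H^1(X,\mathcal{O}_X),k)$, which vanishes precisely when $H^1$ is torsion-free. The paper instead works directly with the short exact sequence $0\to\mathcal{O}_X\xrightarrow{\cdot\pi_K}\mathcal{O}_X\to\iota_\ast\mathcal{O}_{X_k}\to 0$ (available by flatness of $f$), takes global sections, and reads off the exact sequence
\[
0\to \Gamma(X,\mathcal{O}_X)\otimes_{\mathcal{O}_K}k\to \Gamma(X_k,\mathcal{O}_{X_k})\to H^1(X,\mathcal{O}_X)[\pi_K]\to 0,
\]
so the base-change map to the closed point is an isomorphism exactly when $H^1(X,\mathcal{O}_X)[\pi_K]=0$. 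Both proofs then finish by the same observation that commutation with the single base change $\mathcal{O}_K\to k$ (together with the automatic commutation with flat base change) forces commutation with every base change. The paper's approach is shorter and more elementary --- no perfect complex is needed, and the torsion obstruction appears in one line --- while your approach has the virtue of making transparent why the result is really a statement about $\mathrm{Tor}_1$ and would adapt to higher cohomological degrees or more general bases. Your reduction to two terms (replacing $M^1$ by $\ker(M^1\to M^2)$) is fine once you note that the cokernel $M^1/\ker(M^1\to M^2)\hookrightarrow M^2$ is free over the PID, so the inclusion $\ker(M^1\to M^2)\hookrightarrow M^1$ stays injective after any tensor product.
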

\begin{proof}
(Compare \cite{Ca}, Lemma 2.1) Note that $H^1(X, \Og_X)$ is torsion-free if and only if $$H^1(X, \Og_X)[\pi_K]=0,$$ where $\pi_K$ denotes a uniformizing element of $\Og_K$. Let $\iota\colon X_k\to X$ denote the inclusion of the special fibre. Since $f$ is flat, we have an exact sequence 
$$0\to \Og_X\overset{\cdot\pi_K}{\to} \Og_X\to \iota_\ast\iota^\ast\Og_X\to 0.$$ Taking cohomology, we obtain the exact sequence
$$0\to \Gamma(X, \Og_X)\otimes_{\Og_K} k\to \Gamma(X_k, \iota^\ast\Og_X)\to H^1(X, \Og_X)[\pi_K]\to 0.$$ This implies that taking $f_\ast\Og_X$ commutes with the morphism $\Spec k\to \Spec\Og_K$ if and only if $H^1(X, \Og_X)[\pi_K]=0.$ But since taking $f_\ast\Og_X$ commutes with flat base change and $f$ is proper, this suffices to deduce the claim (see \cite{Liu}, Chapter 5.3, Corollary 3.22 and Remark 3.30).
\end{proof}\\
For the next lemma, recall that the \it index \rm of an algebraic curve $C$ over $K$ is given by the greatest common divisor of the numbers $[\kappa(x):K]$, where $\kappa(x)$ denotes the residue field of the point $x$ of $C$ and $x$ runs through all closed points of $C$.
\begin{lemma}
Let $\tilde{C}$ be a geometrically integral smooth proper curve over $K$. Let $\tilde{\mathscr{C}}\to\Spec\Og_K$ be a projective, flat, and regular model of $\tilde{C}$. Suppose further that $\tilde{C}$ has index one (or, equivalently, that $\tilde{C}$ admits a 0-cycle of degree 1). Then the morphism $\tilde{\mathscr{C}}\to \Spec\Og_K$ is cohomologically flat in dimension zero. \label{regularcohomologicallyflatlemma}
\end{lemma}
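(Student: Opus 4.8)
The plan is to use the criterion for cohomological flatness in dimension zero from Proposition \ref{cohomologicallyflatproposition}: it suffices to show that $H^1(\tilde{\mathscr{C}}, \Og_{\tilde{\mathscr{C}}})$ is torsion-free over $\Og_K$. Since $\tilde{\mathscr{C}}\to\Spec\Og_K$ is proper and flat and $\Og_K$ is a DVR, the $\Og_K$-module $H^1(\tilde{\mathscr{C}},\Og_{\tilde{\mathscr{C}}})$ is finitely generated, so torsion-freeness is equivalent to the vanishing of its $\pi_K$-torsion. As in the proof of Proposition \ref{cohomologicallyflatproposition}, flatness gives the exact sequence
$$0\to \Gamma(\tilde{\mathscr{C}},\Og_{\tilde{\mathscr{C}}})\otimes_{\Og_K} k\to \Gamma(\tilde{\mathscr{C}}_k, \Og_{\tilde{\mathscr{C}}_k})\to H^1(\tilde{\mathscr{C}},\Og_{\tilde{\mathscr{C}}})[\pi_K]\to 0,$$
so the task reduces to proving that the map $\Gamma(\tilde{\mathscr{C}},\Og_{\tilde{\mathscr{C}}})\otimes_{\Og_K} k\to \Gamma(\tilde{\mathscr{C}}_k,\Og_{\tilde{\mathscr{C}}_k})$ is surjective (it is always injective).

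First I would identify the two sides. Since $\tilde{C}$ is geometrically integral and proper over $K$, and $\tilde{\mathscr{C}}$ is flat over $\Og_K$ with generic fibre $\tilde{C}$, the ring $\Gamma(\tilde{\mathscr{C}},\Og_{\tilde{\mathscr{C}}})$ is a finite flat $\Og_K$-algebra which is a domain (being a subring of the function field of $\tilde{\mathscr{C}}$) with fraction field contained in $\Gamma(\tilde{C},\Og_{\tilde C})=K$; hence $\Gamma(\tilde{\mathscr{C}},\Og_{\tilde{\mathscr{C}}})=\Og_K$, so the left-hand side is just $k$. For the right-hand side, $\Gamma(\tilde{\mathscr{C}}_k,\Og_{\tilde{\mathscr{C}}_k})$ is a finite $k$-algebra, and the map in question sends $1$ to $1$, so surjectivity is equivalent to $\dim_k\Gamma(\tilde{\mathscr{C}}_k,\Og_{\tilde{\mathscr{C}}_k})=1$, i.e. $H^0(\tilde{\mathscr{C}}_k,\Og_{\tilde{\mathscr{C}}_k})=k$.

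The main obstacle is precisely showing $H^0(\tilde{\mathscr{C}}_k,\Og_{\tilde{\mathscr{C}}_k})=k$, and this is where the index-one hypothesis enters. By upper semicontinuity and constancy of Euler characteristics, $\dim_k H^0(\tilde{\mathscr{C}}_k,\Og_{\tilde{\mathscr{C}}_k}) = 1 + \big(\dim_k H^1(\tilde{\mathscr{C}}_k,\Og_{\tilde{\mathscr{C}}_k}) - \dim_K H^1(\tilde{C},\Og_{\tilde C})\big)$, so the claim is equivalent to the equality of the arithmetic genus of the special fibre with that of the generic fibre. I would argue this via the structure of the regular model: the integer $d_0 := \dim_k H^0(\tilde{\mathscr{C}}_k,\Og_{\tilde{\mathscr{C}}_k})$ is the degree of the finite $\Og_K$-algebra $\Gamma(\tilde{\mathscr{C}},\Og_{\tilde{\mathscr{C}}}) = \Og_K$ — wait, that already gives $d_0 = 1$ by cohomology and base change applied in degree $0$ only if $f_*\Og_{\tilde{\mathscr{C}}}$ is locally free of the expected rank, which is exactly what cohomological flatness would give, so this is circular. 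Instead, the clean route is: the index-one hypothesis implies (e.g. by \cite{BLR}, Chapter 9, or \cite{Liu}) that the gcd of the multiplicities of the components of $\tilde{\mathscr{C}}_k$ is $1$; a standard argument of Raynaud then shows that for a regular model whose special fibre has multiplicities with gcd $1$, one has $H^0(\tilde{\mathscr{C}}_k,\Og_{\tilde{\mathscr{C}}_k})=k$, equivalently $\tilde{\mathscr{C}}\to\Spec\Og_K$ is cohomologically flat in dimension zero. I would either cite this directly (\cite{Ray}, or \cite{Liu}, Chapter 9, Corollary 9.1.24 together with the relation between index and multiplicities) or reproduce the short argument: if $\tilde{\mathscr{C}}_k = \sum_i n_i \Gamma_i$ and $A := H^0(\tilde{\mathscr{C}}_k,\Og_{\tilde{\mathscr{C}}_k})$ is a finite $k$-algebra of dimension $>1$, then $A$ is a product of local Artinian rings corresponding to the connected components of $\tilde{\mathscr{C}}_k$, which is connected (as $\tilde{C}$ is geometrically integral and $\Og_K$ is Henselian), so $A$ is local Artinian; pulling back the nonzero nilpotent or idempotent structure forces a common factor among the $n_i$, contradicting $\gcd_i n_i = 1$. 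This contradiction yields $A = k$, completing the proof.
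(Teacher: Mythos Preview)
Your proposal ultimately arrives at the same argument as the paper: reduce the question to showing that the gcd of the multiplicities $\delta_i$ of the components of $\tilde{\mathscr{C}}_k$ equals $1$, and then invoke \cite{Liu}, Chapter 9.1, Corollary 1.24. Two remarks are in order.

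First, the detour through Proposition~\ref{cohomologicallyflatproposition} and the identification $H^0(\tilde{\mathscr{C}}_k,\Og_{\tilde{\mathscr{C}}_k})=k$ is unnecessary: \cite{Liu}, Chapter 9.1, Corollary 1.24 gives cohomological flatness in dimension zero \emph{directly} from $\gcd_i\delta_i=1$, so once you have decided to cite that result you can discard the first half of your plan.

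Second, and more substantively, the step ``index one $\Rightarrow$ $\gcd_i\delta_i=1$'' is precisely where the content lies, and you only cite it vaguely. The paper supplies the short intersection-theoretic proof: choose a $0$-cycle $\sum_\nu\lambda_\nu z_\nu$ of degree $1$ on $\tilde{C}$, let $Z_\nu$ be the Zariski closure of $z_\nu$ in $\tilde{\mathscr{C}}$, and use \cite{Liu}, Chapter 9.1, Proposition 1.30 to get $\langle Z_\nu,\tilde{\mathscr{C}}_k\rangle=[\kappa(z_\nu):K]$; then $1=\sum_i\langle Z,X_i\rangle\delta_i$ is an integer combination of the $\delta_i$, forcing $\gcd_i\delta_i=1$. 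Your references to \cite{BLR} or \cite{Liu} do not give this implication as a black box for an arbitrary regular model, so you should include this argument. Finally, your closing sketch of ``Raynaud's argument'' (that $A$ local Artinian of dimension $>1$ forces a common factor of the $n_i$ by ``pulling back nilpotent structure'') is not a proof as written; since you already cite \cite{Liu}, Corollary 1.24 for this direction, it is best to drop the sketch.
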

\begin{proof} 
Denote by $\tilde{\mathscr{C}}_k$ the special fibre of $\tilde{\mathscr{C}}\to \Spec\Og_K$. This may be viewed as a divisor on $\tilde{\mathscr{C}}$, which can be written as
$$\tilde{\mathscr{C}}_k=\sum_{i=1}^r \delta_i X_i,$$ with $\delta_j\in \N$, where the $X_j$ are the reduced irreducible components of $\tilde{\mathscr{C}}_k$. By \cite{Liu}, Chapter 9.1, Corollary 1.24, all we have to prove is that the greatest common divisor of the $\delta_i$ is equal to 1. Choose a 0-cycle $\sum_\nu \lambda_\nu z_\nu$ on $\tilde{C}$ of degree 1. For each $\nu$, let $Z_\nu$ be the Zariski closure of $z_\nu$ in $\tilde{\mathscr{C}}$. Then $Z_\nu$ is a horizontal divisor on $\tilde{\mathscr{C}}$. Now denote by $\langle-,-\rangle$ the intersection product of divisors on $\tilde{\mathscr{C}}.$ By \cite{Liu}, Chapter 9.1, Proposition 1.30, we have
$$\langle Z_\nu, \tilde{\mathscr{C}}_k\rangle=[\kappa(z_\nu):K]$$ for all $\nu.$ If we now define the divisor $Z$ to be equal to $\sum_\nu \lambda_\nu Z_\nu$, we obtain
\begin{align*}
1=\langle Z, \tilde{\mathscr{C}}_k\rangle =\sum_{i=1}^r \langle Z, X_i\rangle \delta_i,
\end{align*}
which implies the claim.
\end{proof}\\
\noindent$\mathbf{Remark.}$ From now on, we will always assume that the smooth curve $\tilde{C}$ have index one. This condition is clearly satisfied if $\tilde{C}$ admits a rational point, but there are smooth, proper, geometrically integral curves over $K$ which have index $>1$. For example, suppose that $E$ is an elliptic curve over $K$ with good reduction and that $P$ is a torsor for $E$ over $K$ whose class in $H^1_{\et}(\Spec K, E)$ has order $p$. Such an object exists by \cite{LLR}, Corollary 6.7, and its proof. From \cite{LLR}, Theorem 6.6, it follows that the special fibre of the minimal regular model of $P$ over $\Og_K$ is irreducible, but of multiplicity $p$. Using arguments from intersection theory (as in the proof of the preceding Lemma), we see that the index of $P$ must be divisible by $p.$

\section{Good models of singular curves}
In this section, we shall be concerned with constructing models with desirable properties of proper, geometrically integral curves $C$ over $K$, which have a \it push-out singularity. \rm For the sake of simplicity, let us assume that $C$ has precisely one singular point, and that this singularity is $K$-rational. More precisely, let $\tilde{C}$ be a smooth, proper, geometrically integral curve of index one over $K$. Suppose that $C$ fits into a co-Cartesian diagram

$$\begin{CD}
\tilde{C}@>>>C\\
@AAA@AAA\\
\Spec L@>>>\Spec K,
\end{CD}$$ where $L/K$ is a finite separable extension and the map $\Spec L\to \tilde{C}$ is a closed immersion. Such push-outs always exist by Proposition \ref{pushoutfiniteproposition}. Curve singularities which arise in this way will be called \it push-out singularities. \rm
If $L/K$ were inseparable, or more generally a non-étale algebra over $K$, then $\Pic^0_{C/K}$ would have subgroups of type $\mathbf{G}_a$, and would therefore not admit a Néron model. This is why we cannot consider push-outs along $K$-algebras which are not étale (see also the remark at the end of this paper). Recall that $\tilde{C}$ admits a proper, flat and regular model $\tilde{\mathscr{C}}$ over $\Og_K$.

\begin{proposition}
Let $\tilde{C}$ be a smooth, proper, geometrically integral curve of index one over $K$ and suppose that $C$ arises from $\tilde{C}$ as above. Then there exists a proper, flat, and regular model $\tilde{\mathscr{C}}$ of $\tilde{C}$, as well as a proper and flat scheme $\mathscr{C}\to\Spec\Og_K$, such that\\
(i) $\mathscr{C}\times_{\Og_K} \Spec K\cong C,$\\
(ii) the morphism $\psi\colon \tilde{C}\to C$ extends to a finite morphism $\tilde{\mathscr{C}}\to \mathscr{C}$ (which we shall also call $\psi$ by abuse of notation), and which fits into a push-out diagram
$$\begin{CD}\tilde{\mathscr{C}}@>{\psi}>>\mathscr{C}\\
@AAA@AAA\\
\Spec\Og_L@>>>\Spec\Og_K,
\end{CD}$$
(iii) the canonical morphism \label{singularmodelsproposition}
$$\Spec \Og_L\to \tilde{\mathscr{C}}\times_{\mathscr{C}} \Spec \Og_K$$ is an isomorphism, and\\
(iv) the morphism $\psi\colon \tilde{\mathscr{C}}\to \mathscr{C}$ is an isomorphism away from the image of $\Spec \Og_L$ in $\tilde{\mathscr{C}}$. \\
Furthermore, $\mathscr{C}$ is cohomologically flat in dimension zero.
\end{proposition}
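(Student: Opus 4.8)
The plan is to build $\tilde{\mathscr{C}}$ so that the Zariski closure of the point $\Spec L\in\tilde C$ becomes regular, and then to obtain $\mathscr{C}$ as the push-out of $\tilde{\mathscr{C}}$ along that closure, feeding it to the machinery of Section 4. Concretely, I would start with any projective, flat, regular, and integral model $\tilde{\mathscr{C}}_0$ of $\tilde C$ over $\Og_K$ and let $\mathcal Z_0\subseteq\tilde{\mathscr{C}}_0$ be the Zariski closure of the closed point $\Spec L$ of $\tilde C$; it is a horizontal, integral, one-dimensional closed subscheme finite over $\Og_K$ with function field $L$. By embedded resolution of curves on regular arithmetic surfaces — blowing up closed points, which are regular centres on a regular two-dimensional scheme, so that regularity, projectivity, flatness, and (since the generic fibre is unchanged) the index-one hypothesis are all preserved — after finitely many blow-ups the strict transform of $\mathcal Z_0$, i.e. the closure $\mathcal Z$ of $\Spec L$ in the resulting model $\tilde{\mathscr{C}}$, is regular. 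A regular, integral, one-dimensional scheme finite and flat over $\Og_K$ with function field $L$ is normal, hence equal to $\Spec$ of the integral closure of $\Og_K$ in $L$; thus $\mathcal Z\cong\Spec\Og_L$. Here $\Og_L$ is a finite free $\Og_K$-module and $\Og_L/\Og_K$ is again free over $\Og_K$ (one checks $\pi_K\Og_L\cap\Og_K=\pi_K\Og_K$ using that $\Og_K$ is integrally closed), and $\Og_L$ is local since $\Og_K$ is Henselian, so $\mathcal Z$ lies in an affine open $\tilde U_0=\Spec\tilde A\subseteq\tilde{\mathscr{C}}$ cut out by an ideal $\tilde I$ with $\tilde A/\tilde I=\Og_L$. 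I then set $\mathscr{C}:=\tilde{\mathscr{C}}\cup_{\Spec\Og_L}\Spec\Og_K$, which exists and is proper over $\Og_K$ by Proposition \ref{pushoutfiniteproposition}; locally it is $\Spec A'$ with $A'=\tilde A\times_{\Og_L}\Og_K\hookrightarrow\tilde A$ an $\Og_K$-torsion-free, hence flat, ring, so $\mathscr{C}$ is flat over $\Og_K$.

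The verifications of (i)--(iv) are then formal consequences of Section 4. Property (iv) is Lemma \ref{pushoutexistencelemma}: $\psi$ restricts to an isomorphism $\tilde{\mathscr{C}}\setminus\mathcal Z\xrightarrow{\sim}\mathscr{C}\setminus\Spec\Og_K$. Property (ii) holds because $\tilde A$ is a finite $A'$-module (as in the proof of Proposition \ref{pushoutfiniteproposition}), so $\psi\colon\tilde{\mathscr{C}}\to\mathscr{C}$ is finite, and the push-out square holds by construction; passing to generic fibres recovers $\tilde C\to C$. Property (i) follows from Lemma \ref{basechangelemma} applied with $D=K$, whose hypotheses are met since $\tilde A/\tilde I=\Og_L$ and $\Og_L/\Og_K$ are finite free over $\Og_K$ and $\Og_K\hookrightarrow\Og_L$: this gives $\mathscr{C}\times_{\Og_K}K=\tilde C\cup_{\Spec L}\Spec K=C$. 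Property (iii) follows from the identification $\tilde A\otimes_{A'}\Og_K=\tilde A/\tilde I\tilde A=\Og_L$ (the map $A'\to\Og_K$ being surjective with kernel $\tilde I$, viewed inside $A'$), together with (iv) away from the contracted point; one may also deduce it from Proposition \ref{cocartesianproposition}.

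The substantive point is cohomological flatness. By Proposition \ref{cohomologicallyflatproposition} it suffices to prove that $H^1(\mathscr{C},\Og_{\mathscr{C}})$ is torsion-free. The push-out square gives a short exact sequence of $\Og_{\mathscr{C}}$-modules
$$0\to\Og_{\mathscr{C}}\to\psi_\ast\Og_{\tilde{\mathscr{C}}}\oplus\sigma_\ast\Og_{\Spec\Og_K}\to\iota_\ast\Og_{\Spec\Og_L}\to 0,$$
where $\sigma\colon\Spec\Og_K\to\mathscr{C}$ and $\iota\colon\Spec\Og_L\to\mathscr{C}$ are the canonical finite morphisms; it is exact because it corresponds to $0\to A'\to\tilde A\oplus\Og_K\to\Og_L\to 0$ on $\tilde U_0$ and to the analogous sequence for $C$ on the generic fibre, while all its maps are isomorphisms or zero elsewhere. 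Since $\psi,\sigma,\iota$ are affine and $H^{>0}$ of $\Spec\Og_K$ and $\Spec\Og_L$ in the structure sheaf vanishes, and since $H^0(\tilde{\mathscr{C}},\Og_{\tilde{\mathscr{C}}})=\Og_K$ ($\tilde{\mathscr{C}}$ being integral and finite over the integrally closed ring $\Og_K$, with function field contained in $H^0(\tilde C,\Og_{\tilde C})=K$), the long exact cohomology sequence collapses to
$$0\to\Og_L/\Og_K\to H^1(\mathscr{C},\Og_{\mathscr{C}})\to H^1(\tilde{\mathscr{C}},\Og_{\tilde{\mathscr{C}}})\to 0,$$
the first term being the cokernel of $\Og_K\oplus\Og_K\to\Og_L$, $(a,b)\mapsto a-b$, where both maps are the inclusion $\Og_K\hookrightarrow\Og_L$. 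Now $\Og_L/\Og_K$ is free, hence torsion-free, over $\Og_K$, and $H^1(\tilde{\mathscr{C}},\Og_{\tilde{\mathscr{C}}})$ is torsion-free by Lemma \ref{regularcohomologicallyflatlemma} (this is where index one enters) combined with Proposition \ref{cohomologicallyflatproposition}. An extension of torsion-free modules over the domain $\Og_K$ is torsion-free, so $H^1(\mathscr{C},\Og_{\mathscr{C}})$ is torsion-free, and $\mathscr{C}\to\Spec\Og_K$ is cohomologically flat in dimension zero.

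The main obstacle is the first step: arranging, by blow-ups at regular centres, that the closure of $\Spec L$ becomes regular (so that $\mathcal Z\cong\Spec\Og_L$) while keeping the ambient model regular, projective, and flat. This rests on embedded resolution of curves on regular arithmetic surfaces. Once it is in place, everything else is a formal consequence of the push-out results of Section 4 and the cohomological flatness of the regular model $\tilde{\mathscr{C}}$.
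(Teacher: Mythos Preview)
Your proof is correct and follows essentially the same approach as the paper: both use embedded resolution on a regular model to arrange that the closure of $\Spec L$ is $\Spec\Og_L$, define $\mathscr{C}$ as the push-out, read off (i)--(iv) from the results of Section~4, and establish cohomological flatness via the short exact sequence comparing $\Og_{\mathscr{C}}$ with $\psi_\ast\Og_{\tilde{\mathscr{C}}}$ (whose cokernel is $\Og_L/\Og_K$). Your Mayer--Vietoris form of that sequence and your explicit verification that $\mathscr{C}$ is flat over $\Og_K$ are only minor presentational differences.
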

\begin{proof}
Since $\tilde{C}$ is smooth, and since proper curves over fields are projective, the existence of a projective, flat, and regular model $\tilde{\mathscr{C}}$ of $\tilde{C}$ follows from \cite{Liu}, Chapter 10.1, Proposition 1.8. Since $\tilde{\mathscr{C}}$ is proper over $\Og_K$, the morphism $\Spec L\to \tilde{C}$  extends to a morphism
$$\Spec \Og_L \to \tilde{\mathscr{C}}.$$ The scheme-theoretic image $Y$ of this morphism is isomorphic to $\Spec R$ for some $\Og_K$-order $R\subseteq \Og_L$. Let $D$ be the effective divisor on $\tilde{\mathscr{C}}$ given by the scheme-theoretic image of the map $\Spec \Og_L\to \tilde{\mathscr{C}}$. By the embedded resolution theorem \cite{Liu}, Chapter 9.2, Theorem 2.26, we can find a projective birational morphism 
$$f\colon \tilde{\mathscr{C}}'\to\tilde{\mathscr{C}}$$ with $\tilde{\mathscr{C}}'$ regular, such that the divisor $f^\ast D$ has strict normal crossings. Since the reduced irreducible components of $f^\ast D$ are also regular (\cite{Liu}, Chapter 9.2, Remark 2.27), we may in particular assume that the morphism $\Spec\Og_L\to \tilde{\mathscr{C}}$ is a closed immersion. Now define $\mathscr{C}$ to be the push-out
$$\begin{CD}
\tilde{\mathscr{C}}@>>>\mathscr{C}\\
@AAA@AAA\\
\Spec\Og_L@>>>\Spec \Og_K.
\end{CD}$$
This push-out exists in the category of schemes by Proposition \ref{cocartesianproposition}, using the additional observation that the image of $\Spec \Og_L\to\tilde{\mathscr{C}}$ is contained in an open affine subscheme of $\tilde{\mathscr{C}}$. Properties (i), (ii), and (iv) from the Proposition now follow immediately from the construction (using Proposition \ref{pushoutfiniteproposition} and Lemma \ref{basechangelemma}), and condition (iii) follows because the morphism $\Spec \Og_L\to \tilde{\mathscr{C}}\times_{\mathscr{C}} \Spec \Og_K$ is a scheme-theoretically dominant closed immersion, and hence an isomorphism.
Now consider the exact sequence
$$0\to \Og_{\mathscr{C}}\to\psi_\ast\Og_{\tilde{\mathscr{C}}}\to \mathscr{F}\to 0,$$ where $\mathscr{F}:=(\psi_\ast\Og_{\tilde{\mathscr{C}}})/\Og_{\mathscr{C}}.$ We obtain an exact sequence
$$0\to \Gamma(\mathscr{C}, \mathscr{F})\to H^1(\mathscr{C}, \Og_{\mathscr{C}})\to H^1(\tilde{\mathscr{C}}, \Og_{\tilde{\mathscr{C}}})\to 0,$$ using that $\psi$ is finite. Now, if $i\colon \Spec \Og_K\to \mathscr{C}$ denotes the canonical closed immersion, we have 
$$\mathscr{F} \cong i_\ast (\Og_L/\Og_K),$$ so the global sections of this sheaf are clearly torsion-free. Because $f\colon \tilde{\mathscr{C}}\to \Spec\Og_K$ is cohomologically flat in dimension zero (this follows from Lemma \ref{regularcohomologicallyflatlemma}), $H^1(\tilde{\mathscr{C}}, \Og_{\tilde{\mathscr{C}}})$ is torsion-free as well (Proposition \ref{cohomologicallyflatproposition}). Hence so is $H^1(\mathscr{C}, \Og_{\mathscr{C}}).$ By Proposition \ref{cohomologicallyflatproposition}, the last claim follows, too.
\end{proof}
\section{The Picard functor}
It is well-known (\cite{BLR}, Chapter 9.5) that the Picard functor of a regular, flat, and proper curve $\tilde{\mathscr{C}}\to \Spec\Og_K$ which is cohomologically flat in dimension zero can be used to construct a Néron model of the Jacobian of $\tilde{\mathscr{C}}\times_{\Og_K} \Spec K$ (the result in \it loc. cit. \rm is more general; we use cohomological flatness in dimension zero to ensure that the Picard functor is representable by an algebraic space). The aim of this section is to show that this construction can be generalized to the case where the generic fibre of the curve $\mathscr{C}$ we consider has a push-out singularity. The main difficulty arises from the fact that the schemes we shall employ are in general very far from being normal. Throughout this chapter, $C$ will denote a proper, geometrically integral curve over $K$ with precisely one push-out singularity as before, $\tilde{C}$ will denote its normalization, and $\mathscr{C}$ and $\tilde{\mathscr{C}}$ will denote the schemes from Proposition \ref{singularmodelsproposition}. Furthermore, we shall always assume that $\tilde{C}$ has index one. This guarantees that the morphism $f\colon \tilde{\mathscr{C}}\to\Spec\Og_K$ is cohomologically flat in dimension zero (see Lemma  \ref{regularcohomologicallyflatlemma}).
Recall that the \it Picard Functor \rm $\Pic_{\mathscr{C}/\Og_K}$ of $\mathscr{C}\to \Spec\Og_K$ is defined to be the fppf-sheafification of the presheaf defined on $\Og_K$-schemes $X$ by
$$X\mapsto\Pic (\mathscr{C}\times_{\Og_K} X).$$ If we denote by $f^X$ the projection $\mathscr{C}\times_{\Og_K} X\to X$, then we have
$$\Pic_{\mathscr{C}/\Og_K}(X)=\Gamma(X, \R^1f^X_\ast\Gm)$$ (\cite{K}, p. 21), where the derived functor is taken with respect to the fppf-topology. Moreover, the Leray spectral sequence induces the low-degree exact sequence
\begin{align*}&H^1_{\fppf}(X, \Gm)\to H^1_{\fppf}(\mathscr{C}\times_{\Og_K} X, \Gm)\to \Pic_{\mathscr{C}/\Og_K}(X)\\ \to &H^2_{\fppf}(X, \Gm)\to H^2_{\fppf}(\mathscr{C}\times_{\Og_K} X, \Gm),\end{align*} and similarly with the fppf-topology replaced by the étale topology. The inclusion functor from the étale into the fppf-site induces a morphism between the exact sequence above and the corresponding exact sequence in the étale topology, and Grothendieck's theorem comparing étale and fppf-cohomology for smooth groups as well as the lemma of five homomorphisms implies that the Picard functor can  be defined using the étale instead of the fppf-topology as well. Also observe that we have a morphism of sheaves
$$\Pic_{\mathscr{C}/\Og_K}\to\Z$$ (and similarly for $\tilde{\mathscr{C}}$) coming from the fact that the degree of a line bundle on a flat family is locally constant.  
\begin{lemma}
There is an exact sequence of sheaves \label{exactsequencelemma}
$$0\to \mathscr{T}^0\to \Pic_{\mathscr{C}/\Og_K}\to \Pic_{\tilde{\mathscr{C}}/\Og_K}\to 0$$ on the big étale site of $\Spec\Og_K$, where $\mathscr{T}^0$ denotes the identity component of the Néron model of the torus $T:=(\Res_{L/K}\Gm)/\Gm.$ 
\end{lemma}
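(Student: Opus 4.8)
The plan is to realise both relative Picard functors as first higher direct images of $\Gm$ along the structure morphisms $g\colon\mathscr{C}\to\Spec\Og_K$ and $\tilde{g}\colon\tilde{\mathscr{C}}\to\Spec\Og_K$, and to compare them by pushing forward along $g$ a short exact sequence of unit sheaves attached to the finite morphism $\psi\colon\tilde{\mathscr{C}}\to\mathscr{C}$.

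\emph{Step 1: a short exact sequence of unit sheaves on $\mathscr{C}$.} Let $i\colon\Spec\Og_K\to\mathscr{C}$ be the canonical closed immersion and $h\colon\Spec\Og_L\to\Spec\Og_K$ the natural finite flat morphism, so that $\Spec\Og_L\xrightarrow{\ j\ }\tilde{\mathscr{C}}\xrightarrow{\ \psi\ }\mathscr{C}$ coincides with $\Spec\Og_L\xrightarrow{\ h\ }\Spec\Og_K\xrightarrow{\ i\ }\mathscr{C}$. By Proposition \ref{singularmodelsproposition}(iii), Lemma \ref{basechangelemma} and Proposition \ref{cocartesianproposition}, the push-out description of $\mathscr{C}$ expresses its structure sheaf as a fibre product $\Og_{\mathscr{C}}=\psi_\ast\Og_{\tilde{\mathscr{C}}}\times_{(ih)_\ast\Og_{\Spec\Og_L}}i_\ast\Og_{\Spec\Og_K}$, whence, on unit groups, $\Og_{\mathscr{C}}^\ast=\psi_\ast\Og_{\tilde{\mathscr{C}}}^\ast\times_{(ih)_\ast\Og_{\Spec\Og_L}^\ast}i_\ast\Og_{\Spec\Og_K}^\ast$. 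Since $\psi$ is finite, $\psi_\ast$ is exact; and the restriction map $\Og_{\tilde{\mathscr{C}}}^\ast\to j_\ast\Og_{\Spec\Og_L}^\ast$ is surjective as an étale sheaf, because stalkwise it says that a unit lifts to a unit along a surjection of strictly henselian local rings. Combining these facts produces an exact sequence
$$1\longrightarrow\Og_{\mathscr{C}}^\ast\longrightarrow\psi_\ast\Og_{\tilde{\mathscr{C}}}^\ast\longrightarrow i_\ast\big((h_\ast\Gm)/\Gm\big)\longrightarrow 1$$
of sheaves on the big étale site of $\mathscr{C}$; and since $h_\ast\Gm=\Res_{\Og_L/\Og_K}\Gm$, Corollary \ref{torusneroncorollary}(i) identifies the quotient sheaf $(h_\ast\Gm)/\Gm$ canonically with $\mathscr{T}^0$.

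\emph{Step 2: push forward along $g$.} Applying $Rg_\ast$ to the sequence of Step 1 gives a long exact sequence, which I would evaluate using four observations. First, $\psi$ is finite, hence acyclic for the étale topology, so by the Leray spectral sequence $R^q g_\ast(\psi_\ast\Gm)=R^q\tilde{g}_\ast\Gm$, which is $\tilde{g}_\ast\Gm$ for $q=0$ and $\Pic_{\tilde{\mathscr{C}}/\Og_K}$ for $q=1$. Second, $\mathscr{C}$ and $\tilde{\mathscr{C}}$ are proper, flat, cohomologically flat in dimension zero (Proposition \ref{singularmodelsproposition} and Lemma \ref{regularcohomologicallyflatlemma}) and have geometrically integral generic fibre, so $g_\ast\Og_{\mathscr{C}}^\ast=\Gm=\tilde{g}_\ast\Og_{\tilde{\mathscr{C}}}^\ast$, with the natural map between them the identity. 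Third, $R^1 g_\ast\Og_{\mathscr{C}}^\ast=\Pic_{\mathscr{C}/\Og_K}$ by the definition of the Picard functor recalled above. Fourth, $i$ is a closed immersion with $g\circ i=\Id$, so, again by Leray, $g_\ast(i_\ast\mathscr{T}^0)=\mathscr{T}^0$ and $R^q g_\ast(i_\ast\mathscr{T}^0)=0$ for $q>0$. Assembling these, the long exact sequence reads
$$\Gm\xrightarrow{\ \sim\ }\Gm\longrightarrow\mathscr{T}^0\longrightarrow\Pic_{\mathscr{C}/\Og_K}\longrightarrow\Pic_{\tilde{\mathscr{C}}/\Og_K}\longrightarrow 0.$$
The first arrow being an isomorphism forces the connecting map $\mathscr{T}^0\to\Pic_{\mathscr{C}/\Og_K}$ to be injective, and exactness of the remaining terms is precisely the asserted sequence.

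\emph{Main obstacle.} The real content lies in Step 1, and specifically in identifying the cokernel of $\Og_{\mathscr{C}}^\ast\hookrightarrow\psi_\ast\Og_{\tilde{\mathscr{C}}}^\ast$ with $i_\ast\mathscr{T}^0$: this requires combining the base-change behaviour of push-outs (Lemma \ref{basechangelemma}, Proposition \ref{cocartesianproposition}) with a careful stalkwise check that the restriction-of-units map is surjective on the étale site, and then recognising the resulting quotient $(\Res_{\Og_L/\Og_K}\Gm)/\Gm$ as $\mathscr{T}^0$ via Corollary \ref{torusneroncorollary}. Once that sheaf sequence is available, Step 2 is formal homological bookkeeping; the one point there worth spelling out is the vanishing $R^1 g_\ast(i_\ast\mathscr{T}^0)=0$, which follows from $g\circ i=\Id$ together with the acyclicity of the closed immersion $i$.
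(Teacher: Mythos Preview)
Your approach is essentially the same as the paper's: both establish the short exact sequence of unit sheaves $1\to\Og_{\mathscr{C}}^\times\to\psi_\ast\Og_{\tilde{\mathscr{C}}}^\times\to i_\ast\mathscr{T}^0\to 1$, then push forward and read off the Picard sequence from the long exact sequence, using cohomological flatness to identify $g_\ast\Og^\times$ with $\Gm$ and finiteness of $\psi$ and of $i$ to control the higher direct images. Two small points where the paper is more careful are worth noting. First, the paper does not work directly on the big \'etale site of $\mathscr{C}$ but instead fixes an arbitrary affine $X\to\Spec\Og_K$, establishes the unit sequence on the \emph{small} \'etale site of $\mathscr{C}_X$ (where all test objects are flat over $\mathscr{C}_X$, so Proposition~\ref{cocartesianproposition} applies), pushes forward to the small \'etale site of $X$, and then patches; your invocation of Lemma~\ref{basechangelemma} and Proposition~\ref{cocartesianproposition} really only gives the fibre-product description of $\Og_{\mathscr{C}}$ after flat pullback, so your argument should be read in the same way. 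Second, for the surjectivity of $\psi_\ast\Og_{\tilde{\mathscr{C}}}^\times\to i_\ast(h_\ast\Gm)$ the paper gives an explicit lifting construction (extend a unit on $\Spec\Og_L$ to a function on an affine neighbourhood, extend its inverse as well, and use the push-out description to produce an \'etale cover on which the product becomes invertible), whereas your stalkwise argument---a unit lifts along a surjection of local rings---is cleaner and perfectly adequate on the small \'etale site. Modulo these cosmetic differences the two arguments coincide.
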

\begin{proof}
Let $X=\Spec D$ be an affine $\Og_K$-scheme. Put $\mathscr{C}_X:=\mathscr{C}\times_{\Og_K} X$ and similarly for $\tilde{\mathscr{C}}.$ Denote the structure morphisms of $\tilde{\mathscr{C}}_X$ and $\mathscr{C}_X$ by $\tilde{f}$ and $f$, respectively. The restriction of the functor $\Pic_{\mathscr{C}/\Og_K}$ to the small étale site of $X$ is given by $\R^1f_\ast\Og_{\mathscr{C}_X}^\times$, and similarly for $\tilde{\mathscr{C}}_X.$ Since the morphism $\psi\colon \tilde{\mathscr{C}}_X\to \mathscr{C}_X$ is scheme-theoretically dominant, we have an exact sequence of sheaves
\begin{align}0\to \Og_{\mathscr{C}_X}^\times\to \psi_\ast \Og_{\tilde{\mathscr{C}}_X}^\times\to\mathscr{Q}\to 0\label{exactsequence}\end{align} on the small étale site of $\mathscr{C}_X,$ where $\mathscr{Q}$ is defined to be the cokernel of the second map. Our first task will be to determine $\mathscr{Q}$. Denote the closed immersion $X\to \mathscr{C}_X$ by $\iota$ and put $X_L:=X\times_{\Og_K}\Spec\Og_L.$ We have canonical morphisms 
\begin{align}\psi_\ast\Og_{\tilde{\mathscr{C}}_X}^\times\to \iota_{\ast}\Res_{X_L/X}\Gm\to \iota_{\ast}((\Res_{X_L/X}\Gm)/\Gm)\label{epimorphism}.\end{align} Here, we denote by $\Res_{X_L/X}\Gm$ and $(\Res_{X_L/X}\Gm)/\Gm$ the sheaves on the small étale site of $X$ defined by these group schemes. Note that the second morphism is an epimorphism since $\iota_\ast-$ is exact. Also note that the first morphism is an epimorphism as well, as can be checked as follows: Let $U\to \mathscr{C}_X$ be étale with $U$ affine, and let $\tilde{U}:=U\times_{\mathscr{C}_X}\tilde{\mathscr{C}}_X.$ Let $\varphi \in \iota_{\ast} (\Res_{X_L/X}\Gm)(U)=\Res_{X_L/X}\Gm(U\times_{\mathscr{C}_X} X)=\Gm(\tilde{U}\times_{\tilde{\mathscr{C}}_X}X_L)$ (the last equality follows because $\tilde{U}\times_{\tilde{\mathscr{C}_X}}X_L=(U\times_{\mathscr{C}_X}\tilde{\mathscr{C}_X})\times_{\tilde{\mathscr{C}_X}}X_L=(U\times_{\mathscr{C}_X}X)\times_XX_L$).
We may extend $\varphi$ to a regular function $\tilde{\varphi}$ on $\tilde{U}$. Also extend $\varphi^{-1}$ to a regular function $\tilde{\mu}$ on $\tilde{U}$. Denote by $V(\tilde{\varphi}\tilde{\mu})$ the closed subscheme of $\tilde{U}$ on which $\tilde{\varphi}\tilde{\mu}$ vanishes. Then the intersection of $V(\tilde{\varphi}\tilde{\mu})$ and $\tilde{U}\times_{\tilde{\mathscr{C}}_X} X_L$ is empty, and the function $\tilde{\varphi}\tilde{\mu}$ comes from a regular function $\nu$ on $U$ by Proposition \ref{cocartesianproposition}. Now pick an open covering $W_0,W_1$ of $U$ by defining $W_0$ to be the open subset in $U$ on which $\nu$ does not vanish, and
$W_1$ to be the disjoint union of open sets which form an affine open cover of the complement of $U\times_{\mathscr{C}_X} X$ in $U$. Then the element of $\psi_\ast \Og_{\tilde{\mathscr{C}}_X}(W_0\cup W_1)$ given by $\tilde{\varphi}$ on $W_0$ and $1$ on $W_1$ restricts to $\varphi\in \iota_\ast \Res_{X_L/X}\Gm(W_0\cup W_1).$ Hence surjectivity follows.

We will now show that the kernel of the composition of the two morphisms (\ref{epimorphism}) is precisely $\Og_{\mathscr{C}_X}^\times.$ Let $U\to \mathscr{C}_X$ be étale with $U$ affine, and let $\tilde{U}:=\tilde{\mathscr{C}}_X\times_{\mathscr{C}_X} U.$  By Proposition \ref{cocartesianproposition}, we know that the diagram 
$$\begin{CD}
\tilde{U}@>>>U\\
@AAA@AAA\\
U\times_{\mathscr{C}_X}X_L@>>>U\times_{\mathscr{C}_X}X
\end{CD}$$
is a push-out diagram. Therefore, the image of any $\varphi\in \psi_\ast\Og_{\tilde{\mathscr{C}}_X}^\times(U)$ under the morphism (\ref{epimorphism}) vanishes if and only if the restriction of $\varphi$ to $U\times_{\mathscr{C}_X}X_L$ arises as the pull-back of an invertible function on $U\times_{\mathscr{C}_X}X.$ Because the diagram above is a push-out diagram, this happens if and only if $\varphi$ arises as the pull-back of an invertible function on $U$. Putting things together, we obtain
$$\mathscr{Q}\cong \iota_{\ast}((\Res_{X_L/X}\Gm)/\Gm).$$
Taking cohomology of the exact sequence (\ref{exactsequence}), we obtain an exact sequence
$$0\to \mathscr{T}^0\times_{\Og_K} X\to \R^1f_\ast\Og_{\mathscr{C}_X}^\times\to \R^1\tilde{f}_\ast\Og_{\tilde{\mathscr{C}}_X}^\times\to 0$$ on the small étale site of $X$. This follows because both $f_\ast\Og_{\mathscr{C}_X}^\times$ and $\tilde{f}_\ast\Og_{\tilde{\mathscr{C}_X}}^\times$ are canonically isomorphic to $\Gm$ by cohomological flatness in dimension zero, and because we have
$$\R^1f_\ast (\iota_\ast (\Res_{X_L/X}\Gm)/\Gm)\cong \R^1(f\circ\iota)_\ast((\Res_{X_L/X}\Gm)/\Gm)=0,$$ since $\iota_\ast-$ is exact. Furthermore, we have used Corollary \ref{torusneroncorollary} (i). It is clear from the construction that these exact sequences can be patched together (using base change morphisms in étale cohomology and the fact that a morphism of sheaves in the étale topology is determined by its restriction to the site of affine schemes with the étale topology) to give rise to the exact sequence from the Lemma.
\end{proof}\\
Now define $P_{\mathscr{C}}$ and $P_{\tilde{\mathscr{C}}}$ to be the kernel of $\Pic_{\mathscr{C}/\Og_K}\to \Z$ and $\Pic_{\tilde{\mathscr{C}}/\Og_K}\to \Z$, respectively.  
From Lemma \ref{exactsequencelemma} and Artin's representability theorem (\cite{Ar}, Theorem 7.3) we may deduce
\begin{proposition}
(i) We have an exact sequence \label{Pexactsequenceproposition}
$$0\to \mathscr{T}^0\to P_{\mathscr{C}}\to P_{\tilde{\mathscr{C}}}\to 0$$
of sheaves on the big étale site of $\Spec \Og_K$.\\
(ii) The sheaves $P_{\mathscr{C}}$ and $P_{\tilde{\mathscr{C}}}$ are algebraic spaces locally of finite type over $\Og_K$. In particular, the sequence above can be viewed as an exact sequence on the fppf-site of $\Spec \Og_K.$
\end{proposition}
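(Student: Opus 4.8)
The plan is to derive part~(i) from Lemma~\ref{exactsequencelemma} by means of the snake lemma, and to derive part~(ii) from Artin's representability theorem together with the cohomological flatness established in Proposition~\ref{singularmodelsproposition}.

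For part~(i), working in the abelian category of abelian sheaves on the big \'etale site of $\Spec\Og_K$, I would consider the commutative diagram with exact rows
$$\begin{CD}
0 @>>> \mathscr{T}^0 @>>> \Pic_{\mathscr{C}/\Og_K} @>>> \Pic_{\tilde{\mathscr{C}}/\Og_K} @>>> 0\\
&&@VVV@VV{\deg}V@VV{\deg}V\\
0 @>>> 0 @>>> \Z @= \Z @>>> 0,
\end{CD}$$
whose upper row is the sequence of Lemma~\ref{exactsequencelemma} and whose middle and right verticals are the degree morphisms. The one point that genuinely needs checking is that the right-hand square commutes, i.e.\ that $\psi^\ast$ preserves the degree of a line bundle; this holds because $\psi\colon\tilde{\mathscr{C}}\to\mathscr{C}$ is finite and is an isomorphism over the complement of a closed subscheme finite over $\Og_K$, hence restricts to a degree-one morphism on each fibre, and the degree is locally constant on the base. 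Granting this, commutativity of the left-hand square is automatic, since $\mathscr{T}^0\to\Pic_{\mathscr{C}/\Og_K}\overset{\deg}{\to}\Z$ then agrees with $\mathscr{T}^0\to\Pic_{\mathscr{C}/\Og_K}\to\Pic_{\tilde{\mathscr{C}}/\Og_K}\overset{\deg}{\to}\Z$, which vanishes by exactness of the upper row. The kernels of the three verticals are $\mathscr{T}^0$, $P_{\mathscr{C}}$ and $P_{\tilde{\mathscr{C}}}$ (the last two by definition), and the cokernel of $\mathscr{T}^0\to 0$ is $0$, so the snake lemma produces precisely the short exact sequence $0\to\mathscr{T}^0\to P_{\mathscr{C}}\to P_{\tilde{\mathscr{C}}}\to 0$ asserted in~(i); in particular $P_{\mathscr{C}}\to P_{\tilde{\mathscr{C}}}$ is an epimorphism of sheaves with kernel $\mathscr{T}^0$.

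For part~(ii), I would apply Artin's theorem (\cite{Ar}, Theorem~7.3). Both $\tilde{\mathscr{C}}\to\Spec\Og_K$ and $\mathscr{C}\to\Spec\Og_K$ are proper, flat, and cohomologically flat in dimension zero---the first by Lemma~\ref{regularcohomologicallyflatlemma}, the second by the last assertion of Proposition~\ref{singularmodelsproposition}---so $\Pic_{\mathscr{C}/\Og_K}$ and $\Pic_{\tilde{\mathscr{C}}/\Og_K}$ are representable by algebraic spaces locally of finite type over $\Og_K$. Since $P_{\mathscr{C}}$ and $P_{\tilde{\mathscr{C}}}$ are the preimages of the zero section under the degree maps to the constant sheaf $\Z$, and the zero section is open and closed in $\Z$ (which, $\Og_K$ being connected, is a disjoint union of copies of $\Spec\Og_K$), they are open and closed subspaces and hence themselves algebraic spaces locally of finite type over $\Og_K$. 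For the final clause I would use that algebraic spaces are fppf sheaves and that every \'etale covering is an fppf covering: kernels are computed by the same fibre product in both topologies, and an \'etale-locally surjective morphism of algebraic spaces is fppf-locally surjective, so the sequence of~(i) stays exact on the big fppf site (note also that $\mathscr{T}^0$, being smooth, has the same sheafification in either topology).

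The proof is essentially the formal harvest of the two preceding results, so I do not anticipate any real obstacle. The only ingredient that is not pure bookkeeping is the appeal to Artin's theorem for the \emph{non-normal} scheme $\mathscr{C}$; this is precisely the point for which cohomological flatness in dimension zero was arranged in Proposition~\ref{singularmodelsproposition}, so in effect the hard work has already been carried out there.
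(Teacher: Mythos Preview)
Your proposal is correct and follows essentially the same route as the paper: deduce (i) from Lemma~\ref{exactsequencelemma} by checking that the two degree maps to $\Z$ are compatible (the paper cites \cite{BLR}, Chapter 9.1 for the generic-fibre case and then uses scheme-theoretic density, whereas you argue directly that $\psi$ has degree one on each fibre; both are fine), and deduce (ii) from Artin's theorem via the cohomological flatness established in Proposition~\ref{singularmodelsproposition} and Lemma~\ref{regularcohomologicallyflatlemma}. Your explicit use of the snake lemma is just a slightly more detailed rendering of what the paper calls ``immediately follows''.
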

\begin{proof}
Part (i) immediately follows from Lemma \ref{exactsequencelemma} together with the observation that the degree functions $\Pic_{\mathscr{C}/\Og_K}\to \Z$ and $\Pic_{\tilde{\mathscr{C}}/\Og_K}\to \Z$ are compatible (the argument in \cite{BLR}, Chapter 9.1, pp. 236ff implies that this holds generically, so it must hold globally as the generic fibres of $\Pic_{\mathscr{C}/\Og_K}$, $\Pic_{\tilde{\mathscr{C}}/\Og_K}$ are scheme-theoretically dense in both of these group spaces). For part (ii), recall that $\Pic_{\mathscr{C}/\Og_K}$ and $\Pic_{\tilde{\mathscr{C}}/\Og_K}$ are algebraic spaces locally of finite type over $\Og_K$, since $\mathscr{C}$ and $\tilde{\mathscr{C}}$ are cohomologically flat in dimension zero over $\Og_K$ (the former by Proposition \ref{singularmodelsproposition}; the latter because $\tilde{C}$ has index one; see Lemma \ref{regularcohomologicallyflatlemma}). This follows from Artin's Theorem \cite{Ar}, Theorem 7.3 (see also \cite{K}, Theorem 4.18.6). Since $P_{\mathscr{C}}$ is an open subfunctor of $\Pic_{\mathscr{C}/\Og_K}$ (and similarly for $P_{\tilde{\mathscr{C}}}$), the functors $P_{\mathscr{C}}$ and $P_{\tilde{\mathscr{C}}}$ are algebraic spaces locally of finite type over $\Og_K$ as well. 
\end{proof}\\
\\
\noindent $\mathbf{Remark.}$ In this paper, all algebraic spaces $\mathscr{X}\to S$ over a scheme $S$ are \it locally separated \rm and \it quasi-separated, \rm i.e. we assume that the diagonal morphism $\mathscr{X}\to \mathscr{X}\times_S\mathscr{X}$ is an immersion and quasi-compact. \\
\\
In order to deduce results about Néron models, one needs to pass to the maximal separated quotients of the functors $P_{\mathscr{C}}$ and $P_{\tilde{\mathscr{C}}}$. Let $E_{\mathscr{C}}$ and $E_{\tilde{\mathscr{C}}}$ denote the scheme-theoretic closures of the unit section in $P_{\mathscr{C}}$ and $P_{\tilde{\mathscr{C}}},$ respectively (see \cite{R}, Chapter 3). By \cite{R}, Proposition 3.3.5, $E_{\mathscr{C}}$ and $E_{\tilde{\mathscr{C}}}$ are group objects in the category of algebraic spaces locally of finite type over $\Og_K$, and they are étale over $\Og_K$. We define
$$P_{\mathscr{C}}^{\mathrm{sep}}:=P_{\mathscr{C}}/E_{\mathscr{C}},$$ and similarly for $\tilde{\mathscr{C}}.$ The quotient is taken with respect to the fppf-topology. By \cite{R}, Proposition 3.3.5, both $P_{\mathscr{C}}^{\mathrm{sep}}$ and $P_{\tilde{\mathscr{C}}}^{\mathrm{sep}}$ are representable by separated group schemes locally of finite type over $\Og_K.$ We need the following 
\begin{lemma}
The algebraic spaces $E_{\tilde{\mathscr{C}}}$ and $E_{\mathscr{C}}$ are schemes which admit an open covering by affine schemes isomorphic to $\Spec\Og_K$, glued together along the generic fibre.  \label{Schemeslemma}
\end{lemma}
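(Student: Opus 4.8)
The plan is to show that through each $k$-point of the special fibre of $E:=E_{\mathscr{C}}$ (the case of $E_{\tilde{\mathscr{C}}}$ being identical) there passes an open subspace isomorphic to $\Spec\Og_K$, and then to glue these. First I would collect the structural facts. Since $P_{\mathscr{C},K}$ is a $K$-variety, the unit section of its generic fibre is a closed point, so the scheme-theoretic closure $E$ has generic fibre $E_K\cong\Spec K$; the unit section $\Spec\Og_K\to P_{\mathscr{C}}$ factors through $E$ (as $\Spec\Og_K$ is reduced and irreducible with generic point lying in the generic fibre), so $E_k\neq\emptyset$; and $E$ is an algebraic space locally of finite type and \'etale over $\Og_K$ by \cite{R}, Proposition 3.3.5 (recalled above). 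As $k$ is separably closed, it follows that $E_k\cong\coprod_{i\in I}\Spec k$ for the index set $I=E_k(k)$.

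Next comes the local step, which I expect to be the main obstacle, because Hensel's lemma cannot be applied to the algebraic space $E$ directly. Fix $e\in E_k(k)$. One chooses an \'etale surjection $V\to E$ with $V$ an affine scheme and a point $v\in V$ with residue field $k$ lying over $e$; then $V\to\Spec\Og_K$ is \'etale and sends $v$ to the closed point with trivial residue extension, so, $\Og_K$ being Henselian, there is an open $W\subseteq V$ containing $v$ with an $\Og_K$-isomorphism $W\cong\Spec\Og_K$. Let $j\colon W\to E$ be the induced morphism; the key claim is that $j$ is a monomorphism, i.e. that $\Delta\colon W\to W\times_E W$ is an isomorphism. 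The first projection $\mathrm{pr}_1\colon W\times_E W\to W$ is \'etale, being a base change of $j$; on generic fibres it is the identity $\Spec K\to\Spec K$ (as $E_K=\Spec K$), and on special fibres it is $\Spec k\times_{E_k}\Spec k\to\Spec k$, again the identity, since $W_k=\{e\}$ is one of the connected components $\Spec k$ of $E_k$. Thus $\mathrm{pr}_1$ is \'etale, surjective, and universally injective (its two fibres being single points with trivial residue extensions), hence an isomorphism; as $\mathrm{pr}_1\circ\Delta=\mathrm{id}_W$, also $\Delta$ is an isomorphism, so $j$ is a monomorphism. An \'etale monomorphism of algebraic spaces is an open immersion, so $U_e:=j(W)$ is an open subspace of $E$ with $U_e\cong\Spec\Og_K$ and $e\in U_e$. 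The delicacy here is precisely the descent back to $E$: it works only because a point of $E_k$ is a reduced $k$-point carrying no infinitesimal or automorphism data, which is what forces $\mathrm{pr}_1$ to be an isomorphism.

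Finally I would assemble the covering. As $e$ runs over $E_k(k)=I$, the $U_e$ cover $E$: they contain every point of $E_k$, each $(U_e)_K\cong\Spec K$ maps onto $E_K=\Spec K$, and every point of $E$ lies above either the closed or the generic point of $\Spec\Og_K$. Admitting an open covering by schemes, $E$ is itself a scheme. For $e\neq e'$ the open subscheme $U_e\cap U_{e'}$ of $U_e\cong\Spec\Og_K$ cannot contain the closed point $e$ of $U_e$ (that point is not in $U_{e'}$) but does contain the generic point; since $\Spec\Og_K$ has only these two points, $U_e\cap U_{e'}$ is the generic fibre $\Spec K$ of $U_e$. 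Hence $E$ is obtained by gluing the copies $U_e\cong\Spec\Og_K$ ($e\in I$) along their common generic fibre $\Spec K$, which is the assertion of the Lemma.
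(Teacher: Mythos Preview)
Your proof is correct and follows essentially the same strategy as the paper: use \'etaleness of $E$ over $\Og_K$ and Hensel's lemma to produce maps $\Spec\Og_K\to E$ which are \'etale, argue that each is a monomorphism and hence an open immersion, and then glue. The only difference is in the monomorphism step: where you compute $W\times_E W$ fibrewise, the paper simply observes that each $\Spec\Og_K\to E$ is a section of the structural morphism $E\to\Spec\Og_K$ and therefore automatically a monomorphism.
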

\begin{proof}
Choose an étale cover $U\to E_{\mathscr{C}}.$ Since the generic fibre of $E_{\mathscr{C}}$ is trivial, we may assume that $U=\cup_{i\in I} \Spec\Og_K$ for some index set $I$. This is true since $\Spec \Og_K$ is strictly Henselian. Each map $\Spec\Og_K\to U\to E_{\mathscr{C}}$ can be seen as a section of the structural morphism $E_{\mathscr{C}}\to \Spec\Og_K$, and is therefore a monomorphism. Since each such map is also étale, it is an open immersion. This implies the claim.  
\end{proof}\\
We have
\begin{proposition}
(i) The morphism $P_{\mathscr{C}}^{\mathrm{sep}}\to P_{\tilde{\mathscr{C}}}^{\mathrm{sep}}$ of fppf-sheaves is surjective. \\
(ii) Let $\mathscr{K}$ be the kernel of $P_{\mathscr{C}}^{\mathrm{sep}}\to P_{\tilde{\mathscr{C}}}^{\mathrm{sep}}$. Then there is an étale (but not necessarily separated) group scheme $\mathscr{E}$ of finite type over $\Og_K$ such that there is an exact sequence
$$0\to \mathscr{T}^0\to\mathscr{K}\to\mathscr{E}\to 0.$$ In particular, $\mathscr{K}$ is of finite type and smooth over $\Og_K.$\label{Eproposition}
\end{proposition}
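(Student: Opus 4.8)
The plan is to deduce both statements from the snake lemma applied to a morphism of short exact sequences relating $P_{\mathscr{C}}$, $P_{\tilde{\mathscr{C}}}$ and their maximal separated quotients, the one substantial point being to show that the induced map on the closures of the unit section is a monomorphism.

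First I would observe that the epimorphism $q\colon P_{\mathscr{C}}\to P_{\tilde{\mathscr{C}}}$ of Proposition \ref{Pexactsequenceproposition} carries $E_{\mathscr{C}}$ into $E_{\tilde{\mathscr{C}}}$: since $q$ is a homomorphism it sends the unit section of the generic fibre to the unit section of the generic fibre, and $E_{\mathscr{C}}$ is the schematic image of the former while $E_{\tilde{\mathscr{C}}}\hookrightarrow P_{\tilde{\mathscr{C}}}$ is a closed immersion, so the restriction of $q$ to the closed subspace $E_{\mathscr{C}}$ factors through $E_{\tilde{\mathscr{C}}}$; call the resulting homomorphism $f\colon E_{\mathscr{C}}\to E_{\tilde{\mathscr{C}}}$. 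We thus obtain a commutative diagram of abelian fppf-sheaves
\[
\begin{CD}
0 @>>> E_{\mathscr{C}} @>>> P_{\mathscr{C}} @>>> P_{\mathscr{C}}^{\mathrm{sep}} @>>> 0\\
@. @VV{f}V @VV{q}V @VV{q^{\mathrm{sep}}}V @.\\
0 @>>> E_{\tilde{\mathscr{C}}} @>>> P_{\tilde{\mathscr{C}}} @>>> P_{\tilde{\mathscr{C}}}^{\mathrm{sep}} @>>> 0
\end{CD}
\]
with exact rows (the rows are exact by the very definition of the separated quotient as an fppf-quotient). Applying the snake lemma and using $\Ker q=\mathscr{T}^0$, $\coker q=0$ from Proposition \ref{Pexactsequenceproposition}(i) yields an exact sequence
\[
0\to \Ker f\to \mathscr{T}^0\to \mathscr{K}\to \coker f\to 0
\]
together with $\coker q^{\mathrm{sep}}=0$, which is part (i). (Part (i) is in any case immediate, since the composite $P_{\mathscr{C}}\to P_{\tilde{\mathscr{C}}}\to P_{\tilde{\mathscr{C}}}^{\mathrm{sep}}$ is an epimorphism and factors through $q^{\mathrm{sep}}$.)

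The crux is to show $\Ker f=0$; granting this, the snake sequence becomes $0\to\mathscr{T}^0\to\mathscr{K}\to\mathscr{E}\to 0$ with $\mathscr{E}:=\coker f$. I would identify $\Ker f$ with the fibre product $E_{\mathscr{C}}\times_{E_{\tilde{\mathscr{C}}}}\Spec\Og_K$ along the unit section, which also equals $E_{\mathscr{C}}\times_{P_{\mathscr{C}}}\mathscr{T}^0$ (using $E_{\tilde{\mathscr{C}}}\hookrightarrow P_{\tilde{\mathscr{C}}}$ and $\Ker q=\mathscr{T}^0$). By Lemma \ref{Schemeslemma} the unit section $\Spec\Og_K\hookrightarrow E_{\tilde{\mathscr{C}}}$ is one of the affine pieces of the covering described there, hence an open immersion, so $\Ker f$ is an \emph{open} subscheme of $E_{\mathscr{C}}$; in particular $\Ker f$ is itself étale over $\Og_K$, hence (again by Lemma \ref{Schemeslemma}) covered by copies of $\Spec\Og_K$ glued along a common generic fibre. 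On the other hand, since $E_{\mathscr{C}}\hookrightarrow P_{\mathscr{C}}$ is a closed immersion, $\Ker f=E_{\mathscr{C}}\times_{P_{\mathscr{C}}}\mathscr{T}^0$ is a \emph{closed} subscheme of the separated scheme $\mathscr{T}^0=(\Res_{\Og_L/\Og_K}\Gm)/\Gm$, hence \emph{separated} over $\Og_K$; and its generic fibre is the kernel of the identity of generic fibres $\Spec K\to\Spec K$, hence trivial. A separated scheme covered by copies of $\Spec\Og_K$ glued along a common generic fibre must consist of a single such copy — separatedness forbids doubling a closed point, as $\Gamma(W_i)\otimes_{\Og_K}\Gamma(W_j)\to\Gamma(W_i\cap W_j)$ would have to be surjective but $\Og_K\to K$ is not — so $\Ker f=\Spec\Og_K$, i.e. $\Ker f=0$.

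It then remains to check that $\mathscr{E}=\coker f$ is an étale group scheme of finite type over $\Og_K$; granting this, the exact sequence $0\to\mathscr{T}^0\to\mathscr{K}\to\mathscr{E}\to 0$ together with the smoothness and finite type of $\mathscr{T}^0$ shows that $\mathscr{K}$ is smooth and of finite type over $\Og_K$. As $\Ker f=0$, the homomorphism $f$ is an étale monomorphism, hence an open immersion of étale $\Og_K$-group schemes, so $\mathscr{E}=E_{\tilde{\mathscr{C}}}/E_{\mathscr{C}}$ is again étale over $\Og_K$; and it is of finite type because $E_{\tilde{\mathscr{C}}}$ is quasi-finite over $\Og_K$ (its generic fibre is trivial and its special fibre is the finite component-group attached to $\Pic_{\tilde{\mathscr{C}}_k/k}$ as in \cite{BLR}, Chapter 9.5), so that $\mathscr{E}$, being a fibrewise quotient of $E_{\tilde{\mathscr{C}}}$, is quasi-finite as well. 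The step I expect to be the main obstacle is the triviality of $\Ker f$, i.e. correctly controlling the interaction between the non-separated scheme $E_{\mathscr{C}}$ and the connected group $\mathscr{T}^0$ sitting inside $P_{\mathscr{C}}$; the remaining arguments are formal.
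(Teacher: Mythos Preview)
Your overall strategy---setting up the snake diagram between the two short exact sequences $0\to E\to P\to P^{\mathrm{sep}}\to 0$ and showing that the induced map $f\colon E_{\mathscr{C}}\to E_{\tilde{\mathscr{C}}}$ is a monomorphism---matches the paper's proof exactly, and your argument for $\Ker f=0$ (closed in the separated scheme $\mathscr{T}^0$, hence separated, hence a single copy of $\Spec\Og_K$) is essentially the paper's argument phrased slightly differently.

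The genuine gap is in your last paragraph, where you claim that $E_{\tilde{\mathscr{C}}}$ is quasi-finite over $\Og_K$ because ``its special fibre is the finite component-group attached to $\Pic_{\tilde{\mathscr{C}}_k/k}$''. This is false. The special fibre of $E_{\tilde{\mathscr{C}}}$ is \emph{not} a component group of the N\'eron model; rather, $E_{\tilde{\mathscr{C}}}(\Og_K)$ is the group generated by the classes of the line bundles $\Og_{\tilde{\mathscr{C}}}(-D)$ as $D$ ranges over the reduced irreducible components of $\tilde{\mathscr{C}}_k$, subject only to the relation that the full special fibre gives the trivial bundle. If $\tilde{\mathscr{C}}_k$ has $r>1$ irreducible components, this is typically a free abelian group of rank $r-1$, so $E_{\tilde{\mathscr{C}}}$ is \emph{not} of finite type over $\Og_K$, and your deduction that $\mathscr{E}$ is quasi-finite as a ``fibrewise quotient'' collapses.

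This is precisely the point at which the paper has to work: the finite-type property of $\mathscr{E}$ is the most substantial part of the argument. The paper shows that $\mathscr{E}(\Og_K)$ is $[L:K]$-torsion by an explicit Cartier-divisor construction: for each generator $\mathscr{L}=\Og_{\tilde{\mathscr{C}}}(-D)$ of $E_{\tilde{\mathscr{C}}}(\Og_K)$, one exhibits a line bundle on $\mathscr{C}$ with trivial generic fibre whose pullback to $\tilde{\mathscr{C}}$ is $\mathscr{L}^{\otimes [L:K]}$, by choosing a local equation $g$ for $D$ near the image of $\Spec\Og_L$ and correcting $g^{[L:K]}$ by a unit so that its restriction to $\Spec\Og_L$ lies in $\Og_K$. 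Since $E_{\tilde{\mathscr{C}}}(\Og_K)$ is finitely generated and $\mathscr{E}(\Og_K)$ is a torsion quotient, $\mathscr{E}(\Og_K)$ is finite, and hence $\mathscr{E}$ is of finite type. You will need to supply an argument of this kind; the rest of your proof is fine.
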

\begin{proof}
The first statement is clear since $P_{\mathscr{C}}\to P_{\tilde{\mathscr{C}}}\to P_{\tilde{\mathscr{C}}}^{\mathrm{sep}}$ is surjective. For the second statement, consider the commutative diagram
$$\begin{CD}
&&&&0&&0\\
&&&&@VVV@VVV\\
&&&&E_{\mathscr{C}}@>>>E_{\tilde{\mathscr{C}}}\\
&&&&@VVV@VVV\\
0@>>>\mathscr{T}^0@>>>P_{\mathscr{C}}@>>>P_{\tilde{\mathscr{C}}}@>>>0\\
&&@VVV@VVV@VVV\\
0@>>>\mathscr{K}@>>>P_{{\mathscr{C}}}^{\mathrm{sep}}@>>>P_{\tilde{\mathscr{C}}}^{\mathrm{sep}}@>>>0.\\
\end{CD}$$
It follows from Lemma \ref{Schemeslemma} and its proof that the morphism $E_{\mathscr{C}}\to E_{\tilde{\mathscr{C}}}$ is flat. This implies that the kernel of $E_{\mathscr{C}}\to E_{\tilde{\mathscr{C}}}$ is flat over $\Og_K$. But this kernel is a closed subgroup scheme of $\mathscr{T}^0$, which is separated.  This implies that the kernel is trivial. Now define $\mathscr{E}$ to be the cokernel of $E_{\mathscr{C}}\to E_{\tilde{\mathscr{C}}}$. From the description of $E_{\tilde{\mathscr{C}}}$ and $E_{\mathscr{C}}$ in the proof of Lemma \ref {Schemeslemma} it follows that $\mathscr{E}$ is representable by a group scheme which is étale over $\Og_K$. To show that it is of finite type, it suffices to show that the group $\mathscr{E}(\Og_K)$ is finite. To see this, observe that $E_{\tilde{\mathscr{C}}}(\Og_K)$ is the Abelian group generated by the line bundles $\Og_{\tilde{\mathscr{C}}}(-D)$, where $D$ is a reduced irreducible component of the special fibre of $\tilde{\mathscr{C}}$. Here we use that all elements of $\Pic_{\tilde{\mathscr{C}}/\Og_K}(\Og_K)$ are represented by line bundles; see \cite{BLR}, Chapter 9.1, Corollary 12, and that the same statement holds for $\mathscr{C}$, since $\mathscr{C}$ has a section over $\Og_K$. Hence, $E_{\tilde{\mathscr{C}}}(\Og_K)$ is finitely generated as an Abelian group, so it suffices to show that $\mathscr{E}(\Og_K)$ is torsion. To prove this, let $\mathscr{L}$ be a line bundle on $\tilde{\mathscr{C}}$ which is given by $\Og_{\tilde{\mathscr{C}}}(-D)$ as above. Choose an open cover $U_1,..., U_d$ of the complement of the image of $\Spec\Og_L\to \tilde{\mathscr{C}}$ on which $\mathscr{L}$ is trivial. Then choose an open neighbourhood $V$ of the image of $\Spec \Og_L\to\tilde{\mathscr{C}}$ on which $\mathscr{L}$ is trivial. Choose regular functions $f_j$ on $U_j$ and $g$ on $V$ such that $\mathscr{L}$ is the line bundle associated to the Cartier divisor defined by those rational functions. Now observe that the map $\psi\colon \tilde{\mathscr{C}}\to \mathscr{C}$ is a homeomorphism on the underlying topological spaces (it is certainly bijective, and closed since it is finite). Hence we obtain an open covering $\psi(U_1),...,\psi(U_d),\psi(V)$ of $\mathscr{C}$. The functions $f_j$ and $g$ define rational functions on this open cover, since $\psi$ is birational. One checks immediately that we obtain a Cartier divisor on $\mathscr{C}$ defined by those rational functions. If $\mathscr{M}$ is the line bundle on $\mathscr{C}$ that arises from this Cartier divisor, we have $\psi^\ast\mathscr{M} \cong\mathscr{L}$ by construction, but it is not in general true that $\mathscr{M}$ is trivial at the generic fibre. Since $\mathscr{L}$ is an ideal sheaf which is trivial at the generic fibre, we may assume that the $f_j$ and $g$ are invertible regular functions on the restrictions of the $U_j$ and $V$ to the generic fibres. Now let $n:=[L:K].$ The pull-back of $g$ along $\iota\colon \Spec \Og_{L}\to \tilde{\mathscr{C}}$ may be seen as a non-zero element of $\Og_{L}$. Therefore, there is a unit $c\in \Og_{L}^\times$ such that $\iota^\ast g^n=c\pi_K^{\delta}$ for some $\delta\in\N_0$. After shrinking $V$ if necessary, we may assume that $c$ extends to an invertible regular function $c$ on $V$. Now the Cartier divisor on ${\mathscr{C}}$ defined by the functions $f_j^n$ on $\psi(U_j)$ and $c^{-1}g^n$ on $\psi(V)$ defines a line bundle on $\mathscr{C}$, which pulls back to $\mathscr{L}^{\otimes n}$ on $\tilde{\mathscr{C}}.$ Clearly, this line bundle is trivial on the generic fibre of $\mathscr{C}$. In particular, $\mathscr{L}^{\otimes n}$ lies in the image of $E_{\mathscr{C}}(\Og_K)\to E_{\tilde{\mathscr{C}}}(\Og_K).$ This implies that $\mathscr{E}(\Og_K)$ is $n$-torsion. 
\end{proof}\\
Before proving our first main result, we need the following
\begin{lemma}
Let $\mathscr{F}$, $\mathscr{G}$ and $\mathscr{H}$ be commutative group schemes which are separated, flat, and of finite type over $\Og_K$. Suppose that we have a morphism $f\colon \mathscr{F}\to\mathscr{G}$ and a closed immersion $g\colon \mathscr{F}\to \mathscr{H}$ of group schemes. Then the push-out $\mathscr{G}\oplus_{\mathscr{F}}\mathscr{H}$, taken in the category of fppf-sheaves of Abelian groups, is representable. 
\end{lemma}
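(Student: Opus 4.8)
\emph{Proof idea.} The plan is to identify this push-out with an fppf-quotient and then appeal to known representability results over a discrete valuation ring. Since the category of fppf sheaves of abelian groups on $\Spec\Og_K$ is abelian, the push-out is the cokernel of the map
$$\iota\colon\mathscr{F}\longrightarrow\mathscr{G}\times_{\Og_K}\mathscr{H},\qquad x\longmapsto(f(x),-g(x)),$$
that is, $\mathscr{G}\oplus_{\mathscr{F}}\mathscr{H}\cong(\mathscr{G}\times_{\Og_K}\mathscr{H})/\iota(\mathscr{F})$, the quotient being taken for the fppf-topology. It therefore suffices to prove that this quotient is representable.

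The first step is to check that $\iota$ is a closed immersion. Composing $\iota$ with the second projection $\mathrm{pr}_{\mathscr{H}}\colon\mathscr{G}\times_{\Og_K}\mathscr{H}\to\mathscr{H}$ yields $[-1]_{\mathscr{H}}\circ g$, which is a closed immersion because $g$ is one and $[-1]_{\mathscr{H}}$ is an automorphism. The morphism $\mathrm{pr}_{\mathscr{H}}$ is separated, being the base change of the separated morphism $\mathscr{G}\to\Spec\Og_K$. Now I would invoke the standard fact that if $q\circ h$ is a closed immersion and $q$ is separated, then $h$ is a closed immersion: factor $h$ as its graph followed by a projection, and observe that the graph is a base change of the diagonal of $q$ (hence a closed immersion, as $q$ is separated) while the projection is a base change of $q\circ h$ (hence a closed immersion). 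Applying this with $h=\iota$ and $q=\mathrm{pr}_{\mathscr{H}}$ shows that $\iota$ is a closed immersion, so $\mathscr{F}$ becomes a flat closed subgroup scheme of the group scheme $\mathscr{G}\times_{\Og_K}\mathscr{H}$, which is separated, flat, and of finite type over $\Og_K$.

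The final step invokes the theory of quotients over a one-dimensional Dedekind base. By Artin's representability theorem (used as in the proof of Proposition \ref{Pexactsequenceproposition}), the fppf-quotient $(\mathscr{G}\times_{\Og_K}\mathscr{H})/\iota(\mathscr{F})$ is a group algebraic space; it is of finite type over $\Og_K$, separated because $\iota(\mathscr{F})$ is closed in $\mathscr{G}\times_{\Og_K}\mathscr{H}$, and flat over $\Og_K$ because $\mathscr{G}\times_{\Og_K}\mathscr{H}$ is flat over $\Og_K$ and faithfully flat over the quotient (the projection onto the quotient being an fppf torsor under $\mathscr{F}$, hence flat and surjective, so one may descend flatness). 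One then concludes by Anantharaman's theorem that every flat, separated group algebraic space of finite type over a Dedekind scheme of dimension $\leq 1$ is a scheme (see also the discussion of quotients in \cite{R}).

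The main obstacle I anticipate is this last step: one must take care that the representability result invoked does not secretly require smoothness of $\mathscr{F}$ or of the quotient, since in residue characteristic $p$ the subgroup $\mathscr{F}$ may well be non-smooth, so the flat (rather than merely smooth) form of Anantharaman's results is genuinely needed. By contrast, the identification of the push-out with the quotient, the closed-immersion property of $\iota$, and the flatness of the quotient over $\Og_K$ are all routine.
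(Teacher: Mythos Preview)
Your proof is correct and follows essentially the same strategy as the paper's: identify the push-out with the fppf-quotient $(\mathscr{G}\times_{\Og_K}\mathscr{H})/\iota(\mathscr{F})$, check that $\iota$ is a closed immersion using separatedness of the projection to $\mathscr{H}$, and then invoke Anantharaman. The only cosmetic difference is that the paper cites Anantharaman's Th\'eor\`eme~4C directly for representability of the quotient, whereas you first pass through Artin's theorem to obtain an algebraic space and then appeal to Anantharaman's result that group algebraic spaces over a one-dimensional base are schemes; your worry about needing smoothness is unwarranted, as Anantharaman's result applies to flat closed subgroup schemes.
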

\begin{proof}
Because the projection $\mathscr{G}\times_{\Og_K}\mathscr{H}\to \mathscr{H}$ is separated, the morphism
$$\mathscr{F}\to \mathscr{G}\times_{\Og_K}\mathscr{H}$$ given by $x\mapsto(-f(x), g(x))$ is a closed immersion. By a result of Anantharaman \cite{An}, Théorème 4C, p.53, the quotient $(\mathscr{G}\times_{\Og_K}\mathscr{H})/\mathscr{F}$ is representable. However, one checks easily that this quotient satisfies the universal property of the push-out.
\end{proof}
\begin{theorem}
Let $C$ be a proper, geometrically integral curve over $K$. Assume that $C$ has precisely one singular point, which arises as a push-out as before. Further denote by $\tilde{C}$ the normalization of $C$ and let $\mathscr{T}$, $\mathscr{N}$ and $\tilde{\mathscr{N}}$ denote the Néron models of the semiabelian varieties $T$, $\Pic^0_{C/K}$, and $\Pic^0_{\tilde{C}/K}$, respectively. Then the sequence $$0\to \mathscr{T}\to \mathscr{N}\to\tilde{\mathscr{N}}\to 0$$ of group schemes over $\Og_K$ is exact. \label{Neronexacttheorem}
\end{theorem}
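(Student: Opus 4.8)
The plan is to realise both $\mathscr{N}$ and $\tilde{\mathscr{N}}$ as maximal separated quotients of Picard functors and to read off the exactness from Proposition~\ref{Pexactsequenceproposition}. First I would recall that, since $\tilde{C}$ has index one, the model $\tilde{\mathscr{C}}$ is cohomologically flat in dimension zero (Lemma~\ref{regularcohomologicallyflatlemma}), so that Raynaud's theorem (\cite{BLR}, Chapter~9.5, Theorem~4, together with Chapter~9.7) identifies the separated group scheme $P_{\tilde{\mathscr{C}}}^{\mathrm{sep}}$ with the N\'eron lft-model $\tilde{\mathscr{N}}$ of $\Pic^0_{\tilde{C}/K}$. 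Next, $\mathscr{C}$ is cohomologically flat in dimension zero (Proposition~\ref{singularmodelsproposition}), so by Proposition~\ref{Pexactsequenceproposition}(ii) the functor $P_{\mathscr{C}}$ is an algebraic space locally of finite type over $\Og_K$, and it is smooth since its fibres are curves; hence $P_{\mathscr{C}}^{\mathrm{sep}}=P_{\mathscr{C}}/E_{\mathscr{C}}$ is a smooth separated group scheme locally of finite type over $\Og_K$ with generic fibre $\Pic^0_{C/K}$. Combining Propositions~\ref{Pexactsequenceproposition} and~\ref{Eproposition} yields the exact sequence of smooth group schemes
$$0\to\mathscr{K}\to P_{\mathscr{C}}^{\mathrm{sep}}\to\tilde{\mathscr{N}}\to 0,$$
where $\mathscr{K}$ sits in $0\to\mathscr{T}^0\to\mathscr{K}\to\mathscr{E}\to 0$ with $\mathscr{E}$ \'etale of finite type; on generic fibres this recovers the Chevalley decomposition $0\to T\to\Pic^0_{C/K}\to\Pic^0_{\tilde{C}/K}\to 0$.

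The second step is to identify $\mathscr{K}$ with the N\'eron model $\mathscr{T}$ of Section~4. The canonical morphism $\mathscr{K}\to\mathscr{T}$ provided by the N\'eron property of $\mathscr{T}$ is an isomorphism on generic fibres and restricts to the identity on $\mathscr{T}^0$ (both group schemes have the same identity component, $\mathscr{E}$ being \'etale); it is therefore a monomorphism, hence an open immersion, so it suffices to show that the induced map $\mathscr{E}=\pi_0(\mathscr{K}_k)\to\pi_0(\mathscr{T}_k)=\Z/[L:K]\Z$ (Corollary~\ref{torusneroncorollary}(ii)) is surjective. Applying the snake lemma to the morphism of exact sequences relating $\Pic(\mathscr{C})\to\Pic(\tilde{\mathscr{C}})$ and $\Pic(C)\to\Pic(\tilde{C})$, and using that line bundles extend from $\tilde{C}$ to the regular scheme $\tilde{\mathscr{C}}$, one identifies $\mathscr{E}(\Og_K)$ with the image of a connecting homomorphism $E_{\tilde{\mathscr{C}}}(\Og_K)\to T(K)/\mathscr{T}^0(\Og_K)\cong\Z/[L:K]\Z$; evaluating it on the class of $\mathcal{O}_{\tilde{\mathscr{C}}}(-X_0)$, where $X_0$ is the component of the special fibre of $\tilde{\mathscr{C}}$ met by the closure of $\Spec\Og_L$, gives a generator, because a local equation of $X_0$ restricts to a uniformiser of $\Og_L$. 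If necessary I would perform additional blow-ups in the construction of Proposition~\ref{singularmodelsproposition} so that $\Spec\Og_L$ meets the special fibre transversally at a smooth point of it, in which case $\langle\overline{\Spec\Og_L},\tilde{\mathscr{C}}_k\rangle=[L:K]$ just as in Lemma~\ref{regularcohomologicallyflatlemma}. (Alternatively one sidesteps this computation altogether by replacing $P_{\mathscr{C}}^{\mathrm{sep}}$ throughout with the push-out $\mathscr{T}\oplus_{\mathscr{K}}P_{\mathscr{C}}^{\mathrm{sep}}$, which is representable by the Lemma preceding this Theorem and already sits in an exact sequence $0\to\mathscr{T}\to\mathscr{T}\oplus_{\mathscr{K}}P_{\mathscr{C}}^{\mathrm{sep}}\to\tilde{\mathscr{N}}\to 0$.)

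The third step is a general extension principle: if $0\to\mathscr{G}'\to\mathscr{G}\to\mathscr{G}''\to 0$ is an exact sequence of smooth separated group schemes locally of finite type over $\Og_K$ in which $\mathscr{G}'$ and $\mathscr{G}''$ are N\'eron lft-models, then $\mathscr{G}$ is the N\'eron lft-model of $\mathscr{G}_K$. I would prove this by the standard d\'evissage of the N\'eron mapping property to the case of a strictly Henselian discrete valuation ring $R'$ unramified over $\Og_K$: given $x\in\mathscr{G}_K(\Frac R')$, its image extends to $\bar{x}\in\mathscr{G}''(R')$ by the N\'eron property of $\mathscr{G}''$; the fibre product $\Spec R'\times_{\mathscr{G}''}\mathscr{G}$ is a torsor under $\mathscr{G}'_{R'}$, hence smooth and surjective over $\Spec R'$, so it has an $R'$-point by Hensel's lemma; correcting this point by a suitable element of $\mathscr{G}'(R')$ — available since $\mathscr{G}'$ is a N\'eron lft-model — produces the required extension of $x$. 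Applying this to $0\to\mathscr{T}\to P_{\mathscr{C}}^{\mathrm{sep}}\to\tilde{\mathscr{N}}\to 0$ (using Step~2) gives $P_{\mathscr{C}}^{\mathrm{sep}}=\mathscr{N}$, and the displayed sequence becomes the one in the statement.

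The genuine obstacle lies not in this last formal step, but in Step~2 and in the results of Sections~5--6 on which it rests. The scheme $\mathscr{C}$ is highly non-normal, so the classical device of extending line bundles through Weil divisors is unavailable; instead one must transfer the question to the regular model $\tilde{\mathscr{C}}$ along the finite morphism $\psi$ and keep track of the failure of descent by means of the explicitly computed N\'eron model $\mathscr{T}$ of the torus $T$. This is precisely the role played by Proposition~\ref{cocartesianproposition}, Lemma~\ref{exactsequencelemma} and Proposition~\ref{Eproposition}: they encode the non-normality of $\mathscr{C}$ into the torus $(\Res_{L/K}\Gm)/\Gm$, whose N\'eron model we understand by hand, and thereby reduce the exactness of the sequence of N\'eron models to the manageable assertions of Steps~1--3.
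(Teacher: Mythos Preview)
Your parenthetical alternative in Step~2 --- replacing $P_{\mathscr{C}}^{\mathrm{sep}}$ by the push-out $\mathscr{T}\oplus_{\mathscr{K}}P_{\mathscr{C}}^{\mathrm{sep}}$ --- is exactly the route the paper takes, and your Step~3 extension principle is precisely what the paper invokes as \cite{BLR}, Chapter~7.5, Proof of Proposition~1(b). So the proposal is correct and, modulo the choice made in Step~2, coincides with the paper's argument.

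The difference lies in your \emph{primary} Step~2, where you try to prove outright that $\mathscr{K}\cong\mathscr{T}$ by showing surjectivity onto $\Phi(\mathscr{T})\cong\Z/[L:K]\Z$. The paper deliberately avoids this: Proposition~\ref{Eproposition} establishes only that $\mathscr{E}$ is $[L:K]$-torsion, not that it has order exactly $[L:K]$. Your computation with $\mathcal{O}_{\tilde{\mathscr{C}}}(-X_0)$ is plausible but needs care: if $X_0$ has multiplicity $\delta_0$ in $\tilde{\mathscr{C}}_k$, then a local equation for $X_0$ restricts along $\Spec\Og_L\hookrightarrow\tilde{\mathscr{C}}$ to an element of $L$-valuation $[L:K]/\delta_0$, which generates $\Z/[L:K]\Z$ only when $\delta_0=[L:K]$. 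Your suggested fix (further blow-ups to force transversality at a component of the right multiplicity) can presumably be arranged, but it is extra work, and the paper's push-out trick sidesteps the question entirely. What your direct approach would buy, if completed, is the sharper statement $P_{\mathscr{C}}^{\mathrm{sep}}\cong\mathscr{N}$ (not merely that some push-out of it is $\mathscr{N}$); the paper does not assert this and does not need it. Finally, a small wording issue: ``smooth since its fibres are curves'' should read ``smooth since the fibres of $\mathscr{C}\to\Spec\Og_K$ are curves, so the obstruction space $H^2$ vanishes''.
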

\begin{proof}
Let $\mathscr{C}$ be the model of $C$ from Proposition \ref{singularmodelsproposition}. We have the exact sequence
$$0\to \mathscr{K}\to P_{\mathscr{C}}^{\mathrm{sep}}\to P_{\tilde{\mathscr{C}}}^{\mathrm{sep}}\to 0.$$ As before, let $\mathscr{T}$ denote the Néron model of the torus $T=(\Res_{L/K}\Gm)/\Gm,$ and let $\mathscr{K}$ and $\mathscr{E}$ be the group schemes from Proposition \ref{Eproposition}. Since $\mathscr{E}$ is trivial at the generic fibre, $\mathscr{K}$ is a smooth model of $T$, and $\mathscr{K}$ is clearly separated over $\Og_K$. Therefore, there is a unique morphism $\mathscr{K}\to \mathscr{T}$ extending the identification of the generic fibres. Now consider the exact sequence of fppf-sheaves
$$0\to\mathscr{T} \to P_{\mathscr{C}}^{\mathrm{sep}}\oplus_{\mathscr{K}}\mathscr{T}\to P_{\tilde{\mathscr{C}}}^{\mathrm{sep}}\to 0.$$ Because the group schemes $\mathscr{T},$ $\mathscr{K}$, and $P_{\tilde{\mathscr{C}}}^{\mathrm{sep}}$ are separated, flat, finite type over $\Og_K$, the push-out $P_{\mathscr{C}}^{\mathrm{sep}}\oplus_{\mathscr{K}}\mathscr{T}$ is representable by a group scheme of finite type over $\Og_K$ by the previous Lemma. By \cite{BLR}, Chapter 9.5, Theorem 4, we know that $P_{\tilde{\mathscr{C}}}^{\mathrm{sep}}$ is a Néron model of $\Pic^0_{\tilde{C}/K}$. By \cite{BLR}, Chapter 7.5, Proof of Proposition 1(b), it follows that $P_{\mathscr{C}}^{\mathrm{sep}}\oplus_{\mathscr{K}}\mathscr{T}$ is a Néron model of $\Pic^0_{C/K}$. 
\end{proof}
\subsection{A remark on Chai's conjecture}
Suppose that $0\to H \to B\to A\to 0$ is an exact sequence with $H$ a torus, $B$ semiabelian, and $A$ Abelian. Let $K'$ be an extension of $K$ over which $B$ acquires semiabelian reduction (see \cite{SGA7}, Théorème 3.6 for the Abelian case; in general, one first extends $K$ to ensure that $A$ has semiabelian reduction and then makes a further extension to split $H$). Denote the Néron models of $H$, $A$, $B$ by $\mathscr{H}$, $\mathscr{A}$, $\mathscr{B}$, respectively. Further, put $H':=H\times_K\Spec K'$, and denote the Néron model of $H'$ over $\Og_{K'}$ by $\mathscr{H}'$. Use analogous notation for $A$ and $B$. We have a natural morphism
$$\phi\colon (\Lie \mathscr{H})\otimes_{\Og_K}\Og_{K'}\to \Lie\mathscr{H}'.$$ Define the \it base change conductor \rm of $H$ to be
$$c(H):=\frac{1}{[K':K]}\mathrm{length}_{\Og_{K'}}(\coker\phi),$$ and similarly for $A$ and $B$. This number is independent of the choice of $K'.$ Chai (\cite{Chai}, \S 8.1) conjectured that in this case, we have
$$c(B)=c(H)+c(A).$$ This conjecture is known if $K$ has characteristic zero, and in some other special cases. However, the proofs are, at least in the wildly ramified case, rather involved; for example, the proof presented in \cite{CLN} for the characteristic zero case uses a Fubini property for motivic integrals. If the sequence $0\to \mathscr{H}\to\mathscr{B}\to \mathscr{A}\to 0$ of Néron models is exact, then we can give an elementary proof of Chai's conjecture for the semiabelian variety $B$:
\begin{proposition} Keep the notation from this subsection, and suppose that the sequence $0\to \mathscr{H}\to \mathscr{B}\to \mathscr{A}\to 0$ is exact. Then we have
$$c(B)=c(H)+c(B).$$
\end{proposition}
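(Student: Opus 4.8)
The statement evidently should read $c(B)=c(H)+c(A)$. The plan is to compare, at the level of Lie algebras, the exact sequence of N\'eron models over $\Og_K$ that we have assumed with the corresponding sequence over $\Og_{K'}$, and then to read off the identity from additivity of length. First I would note that, by functoriality of N\'eron models, the canonical comparison morphisms $\mathscr{X}\times_{\Og_K}\Spec\Og_{K'}\to\mathscr{X}'$ for $\mathscr{X}\in\{\mathscr{H},\mathscr{B},\mathscr{A}\}$ are compatible with the morphisms in the Chevalley sequence, so the diagrams I build below commute.

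The first step is over $\Og_K$. Applying $\Lie$ to $0\to\mathscr{H}\to\mathscr{B}\to\mathscr{A}\to 0$ gives, by \cite{LLR}, Proposition 1.1(a),(c), an exact sequence $0\to\Lie\mathscr{H}\to\Lie\mathscr{B}\to\Lie\mathscr{A}\to 0$ of $\Og_K$-modules; since $\Lie\mathscr{A}$ is free this sequence is split, hence remains exact after $-\otimes_{\Og_K}\Og_{K'}$. The second step is over $\Og_{K'}$: because $K'$ was chosen so that $H':=H\times_K\Spec K'$ is split, Lemma 11.2 of \cite{CY} applies and shows that $0\to\mathscr{H}'\to\mathscr{B}'\to\mathscr{A}'\to 0$ is exact; applying $\Lie$ and \cite{LLR}, Proposition 1.1 once more yields an exact sequence $0\to\Lie\mathscr{H}'\to\Lie\mathscr{B}'\to\Lie\mathscr{A}'\to 0$.

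Now I would assemble the commutative diagram with these two exact rows and vertical arrows $\phi_H$, $\phi_B$, $\phi_A$ (the base change comparison maps). Each $\phi$ is a map of free (hence torsion-free) $\Og_{K'}$-modules whose generic fibre is the identity of $\Lie(X\times_K K')$, hence is injective. The snake lemma then collapses to a short exact sequence
\[0\to\coker\phi_H\to\coker\phi_B\to\coker\phi_A\to 0\]
of $\Og_{K'}$-modules of finite length. Additivity of $\mathrm{length}_{\Og_{K'}}(-)$ gives $\mathrm{length}_{\Og_{K'}}\coker\phi_B=\mathrm{length}_{\Og_{K'}}\coker\phi_H+\mathrm{length}_{\Og_{K'}}\coker\phi_A$, and dividing by $[K':K]$ yields $c(B)=c(H)+c(A)$.

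The argument is essentially formal once one has exactness of the N\'eron-model sequence over $\Og_{K'}$, and \textbf{the one genuinely non-trivial ingredient} is precisely that exactness: it does not follow from exactness over $\Og_K$ but has to be imported from the known split case \cite{CY}, Lemma 11.2 (which is why the hypothesis over $\Og_K$ alone is not enough). One should also check that $K'$ can be taken so that $A$ acquires semiabelian reduction and $H$ splits simultaneously --- but this is exactly the field used to define all three base change conductors, so the lengths are computed over a common base and there is no circularity.
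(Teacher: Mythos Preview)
Your proof is correct and follows essentially the same route as the paper: exactness of the N\'eron-model sequence over $\Og_{K'}$ via \cite{CY}, Lemma 11.2, then the snake lemma on the resulting diagram of Lie algebras and additivity of length. Your write-up is in fact more explicit than the paper's about why the top row stays exact after $-\otimes_{\Og_K}\Og_{K'}$ and why the vertical maps are injective; the only point the paper adds that you omit is the remark that \cite{CY}, Lemma 11.2 is stated for tori but the argument there carries over to the mixed case.
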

\begin{proof}
Note that, since $B$ acquires semistable reduction over $K'$, the torus $H'$ is split. Hence, the sequence $0\to \mathscr{H}'\to \mathscr{B}'\to \mathscr{A}'\to 0$ is exact. (This follows from the argument in \cite{CY}, Lemma 11.2. The result is stated there only for exact sequences of tori, but their proof works in the more general case as well.) On the level of Lie algebras, we obtain the commutative diagram
$$\begin{CD}
0@>>>(\Lie\mathscr{H})\otimes_{\Og_{K}}\Og_{K'}@>>>(\Lie\mathscr{B})\otimes_{\Og_{K}}\Og_{K'}@>>>(\Lie\mathscr{A})\otimes_{\Og_{K}}\Og_{K'}@>>>0\\
&&@VVV@VVV@VVV\\
0@>>>\Lie{\mathscr{H}}'@>>>\Lie{\mathscr{B}}'@>>>\Lie{\mathscr{A}}'@>>>0.
\end{CD}$$
Now the result follows from the additivity of lengths of modules in exact sequences, and the snake lemma. 
\end{proof}
\begin{corollary}
Let $C$ be a proper curve over $K$ with precisely one push-out singularity as before. Then Chai's conjecture is true for the semiabelian variety $\Pic^0_{C/K}.$ 
\end{corollary}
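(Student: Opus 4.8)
The plan is to deduce this immediately from Theorem~\ref{Neronexacttheorem} and the preceding Proposition of this subsection. First I record the Chevalley decomposition of $B:=\Pic^0_{C/K}$. Restricting the exact sequence of Lemma~\ref{exactsequencelemma} (equivalently, Proposition~\ref{Pexactsequenceproposition}) to the generic fibre --- or, if one prefers, invoking the classical description of the Jacobian of a singular curve in terms of the Jacobian of its normalization and the nature of its singularity --- gives an exact sequence
$$0\to T\to \Pic^0_{C/K}\to \Pic^0_{\tilde{C}/K}\to 0$$
of group schemes over $K$, where $T=(\Res_{L/K}\Gm)/\Gm$. Since $\tilde{C}$ is smooth, proper, and geometrically integral, $\Pic^0_{\tilde{C}/K}$ is an Abelian variety; and $T$ is an algebraic torus precisely because $L/K$ is separable. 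Hence $\Pic^0_{C/K}$ is semiabelian and the displayed sequence is its Chevalley decomposition, so that the base change conductor $c(\Pic^0_{C/K})$ and Chai's conjecture make sense for it. (This is where separability of $L/K$ is essential: for a non-\'etale $L$ the group $\Pic^0_{C/K}$ would contain a subgroup of type $\mathbf{G}_a$ and would fail to be semiabelian, as noted in Chapter~5.)

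Next I apply Theorem~\ref{Neronexacttheorem}, which asserts exactly that the induced sequence
$$0\to\mathscr{T}\to\mathscr{N}\to\tilde{\mathscr{N}}\to 0$$
of N\'eron models over $\Og_K$ is exact, where $\mathscr{T}$, $\mathscr{N}$, $\tilde{\mathscr{N}}$ denote the N\'eron models of $T$, $\Pic^0_{C/K}$, $\Pic^0_{\tilde{C}/K}$ respectively. This is precisely the hypothesis of the Proposition of this subsection, applied with $H=T$, $B=\Pic^0_{C/K}$, and $A=\Pic^0_{\tilde{C}/K}$. That Proposition then yields
$$c\bigl(\Pic^0_{C/K}\bigr)=c(T)+c\bigl(\Pic^0_{\tilde{C}/K}\bigr),$$
which is the statement of Chai's conjecture for $\Pic^0_{C/K}$.

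There is essentially no obstacle in this final step: the substantive work has already been carried out in Theorem~\ref{Neronexacttheorem} (the exactness of the sequence of N\'eron models, which is the main technical result of the paper) and in the elementary snake-lemma argument of the preceding Proposition. The only points requiring attention are bookkeeping --- checking that the toric part of $\Pic^0_{C/K}$ is indeed the torus $T$ to which Theorem~\ref{Neronexacttheorem} applies, and that $\Pic^0_{C/K}$ is semiabelian in the first place --- and both of these are guaranteed by the standing hypotheses on $C$ together with the separability of $L/K$.
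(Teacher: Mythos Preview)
Your proof is correct and follows exactly the same approach as the paper's own proof, which simply cites Theorem~\ref{Neronexacttheorem} and the preceding Proposition. You have merely made explicit the identification of the Chevalley decomposition and the verification that $\Pic^0_{C/K}$ is semiabelian, which the paper takes for granted.
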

\begin{proof} This follows from the Proposition preceding the Corollary and Theorem \ref{Neronexacttheorem}.
\end{proof}
\subsection{The connected component of the Néron model}
Let us now turn to the question of identifying the identity component of the Néron model $\mathscr{N}$ of $\Pic^0_{C/K}.$ Recall that the subsheaf $\Pic^0_{\mathscr{C}/\Og_K}$ of $P_{\mathscr{C}}$ is given by 
\begin{align*}
&\Pic^0_{\mathscr{C}/\Og_K}(S)\\
=&\{ \psi\colon S\to P_{\mathscr{C}}\colon \psi\times_{\Og_K} \Id_{\Spec k} \text{ factors through the identity component of } P_{\mathscr{C}}\times_{\Og_K} \Spec k\}
\end{align*}
for all $\Og_K$-schemes $S$, and similarly for $\tilde{\mathscr{C}}.$ This makes sense because $P_{\mathscr{C}}\times_{\Og_K} \Spec k$ is a group object in the category of algebraic spaces over the field $k$, and hence representable by a scheme. 
\begin{lemma}
The sequence
$$0\to \mathscr{T}^0\to \Pic^0_{\mathscr{C}/\Og_K}\to \Pic^0_{\tilde{\mathscr{C}}/\Og_K}\to 0$$ is exact.
\end{lemma}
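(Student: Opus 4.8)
The plan is to obtain this sequence by restricting the exact sequence
$0\to\mathscr{T}^0\to P_{\mathscr{C}}\to P_{\tilde{\mathscr{C}}}\to 0$
of Proposition \ref{Pexactsequenceproposition}(i) to appropriate open subsheaves. First I would check that the two arrows in the asserted sequence are well defined. The composite $\mathscr{T}^0\to P_{\mathscr{C}}$ factors through $\Pic^0_{\mathscr{C}/\Og_K}$ because $\mathscr{T}^0$ has connected fibres: by Corollary \ref{torusneroncorollary}(i) we have $\mathscr{T}^0\cong(\Res_{\Og_L/\Og_K}\Gm)/\Gm$, whose special fibre is connected (note $L/K$ is totally ramified since $k$ is separably closed), so $\mathscr{T}^0_k$ lands in the identity component of $(P_{\mathscr{C}})_k$. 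The morphism $P_{\mathscr{C}}\to P_{\tilde{\mathscr{C}}}$ carries $\Pic^0_{\mathscr{C}/\Og_K}$ into $\Pic^0_{\tilde{\mathscr{C}}/\Og_K}$ because a homomorphism of group schemes over $k$ sends the identity component into the identity component.

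Left- and middle-exactness are then purely formal. The map $\mathscr{T}^0\to\Pic^0_{\mathscr{C}/\Og_K}$ is a monomorphism since its composite with the monomorphism $\Pic^0_{\mathscr{C}/\Og_K}\hookrightarrow P_{\mathscr{C}}$ is one. For middle-exactness, since $\Pic^0_{\tilde{\mathscr{C}}/\Og_K}\hookrightarrow P_{\tilde{\mathscr{C}}}$ is a monomorphism, the kernel of $\Pic^0_{\mathscr{C}/\Og_K}\to\Pic^0_{\tilde{\mathscr{C}}/\Og_K}$ equals $\Pic^0_{\mathscr{C}/\Og_K}\cap\ker(P_{\mathscr{C}}\to P_{\tilde{\mathscr{C}}})=\Pic^0_{\mathscr{C}/\Og_K}\cap\mathscr{T}^0$, which is $\mathscr{T}^0$ by the first paragraph.

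The substantive point is surjectivity of $\Pic^0_{\mathscr{C}/\Og_K}\to\Pic^0_{\tilde{\mathscr{C}}/\Og_K}$ as fppf sheaves. Here I would show that $\Pic^0_{\mathscr{C}/\Og_K}$ is the fibre product $P_{\mathscr{C}}\times_{P_{\tilde{\mathscr{C}}}}\Pic^0_{\tilde{\mathscr{C}}/\Og_K}$ inside $P_{\mathscr{C}}$; granting this, surjectivity is immediate by base-changing the epimorphism $P_{\mathscr{C}}\to P_{\tilde{\mathscr{C}}}$ along $\Pic^0_{\tilde{\mathscr{C}}/\Og_K}\hookrightarrow P_{\tilde{\mathscr{C}}}$ (equivalently: any fppf-local lift to $P_{\mathscr{C}}$ of a section of $\Pic^0_{\tilde{\mathscr{C}}/\Og_K}$ automatically reduces into the identity component, hence lies in $\Pic^0_{\mathscr{C}/\Og_K}$). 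Since membership in either $\Pic^0$ is a condition on the reduction to the special fibre, this identification reduces to the claim that the preimage of $(P_{\tilde{\mathscr{C}}})^0_k$ under $\pi_k\colon(P_{\mathscr{C}})_k\to(P_{\tilde{\mathscr{C}}})_k$ equals $(P_{\mathscr{C}})^0_k$. Restricting the exact sequence of Proposition \ref{Pexactsequenceproposition}(i) to $\Spec k$ — which preserves both the kernel and the surjectivity — gives $0\to\mathscr{T}^0_k\to(P_{\mathscr{C}})_k\to(P_{\tilde{\mathscr{C}}})_k\to 0$ with $\mathscr{T}^0_k$ connected. The subgroup scheme $\pi_k^{-1}((P_{\tilde{\mathscr{C}}})^0_k)$ is open and closed in $(P_{\mathscr{C}})_k$, contains the identity, and is an extension of the connected group $(P_{\tilde{\mathscr{C}}})^0_k$ by the connected group $\mathscr{T}^0_k$, hence connected; so it coincides with $(P_{\mathscr{C}})^0_k$, which it obviously contains. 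I expect this last group-theoretic step — in particular the point that connectedness of $\mathscr{T}^0_k$ forces the extension to be connected — to be the only genuine input; everything else is formal manipulation of the sheaf sequence of Proposition \ref{Pexactsequenceproposition}.
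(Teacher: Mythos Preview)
Your proposal is correct and follows essentially the same approach as the paper: both arguments restrict the exact sequence $0\to\mathscr{T}^0\to P_{\mathscr{C}}\to P_{\tilde{\mathscr{C}}}\to 0$ to the $\Pic^0$-subfunctors, identify the kernel formally, and deduce surjectivity from the single substantive input that $\mathscr{T}^0_k$ is connected (so any fppf-local lift to $P_{\mathscr{C}}$ of a section of $\Pic^0_{\tilde{\mathscr{C}}/\Og_K}$ automatically lands in $\Pic^0_{\mathscr{C}/\Og_K}$). Your version is slightly more explicit in phrasing the surjectivity step via the fibre-product identification $\Pic^0_{\mathscr{C}/\Og_K}=P_{\mathscr{C}}\times_{P_{\tilde{\mathscr{C}}}}\Pic^0_{\tilde{\mathscr{C}}/\Og_K}$ and in spelling out the special-fibre argument, but the content is the same.
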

\begin{proof}
Let $S\to \Pic^0_{\tilde{\mathscr{C}}/\Og_K}$ be a morphism. Because $P_{\mathscr{C}}\to P_{\tilde{\mathscr{C}}}$ is surjective, we can find an fppf-cover $S'\to S$ such that $S'\to \Pic^0_{\tilde{\mathscr{C}}/\Og_K}\to P_{\tilde{\mathscr{C}}}$ factors through $P_{\mathscr{C}}.$ However, since $\mathscr{T}^0_k$ is connected, the morphism $S'_k\to (P_{\mathscr{C}})_k$ must factor through the identity component, which implies that $S'\to P_{\mathscr{C}}$ factors through $\Pic^0_{{\mathscr{C}}/\Og_K}.$ Clearly, the kernel of $\Pic^0_{\mathscr{C}/\Og_K}\to \Pic^0_{\tilde{\mathscr{C}}/\Og_K}$ is equal to the scheme-theoretic intersection of $\Pic^0_{\mathscr{C}/\Og_K}$ with $\mathscr{T}^0$ inside $P_{\mathscr{C}}.$ However, the morphism $\mathscr{T}^0\to P_{\mathscr{C}}$ factors through $\Pic^0_{\mathscr{C}/\Og_K}$ since $\mathscr{T}^0_k$ is connected, so this scheme-theoretic intersection equals $\mathscr{T}^0.$ This implies the claim.
\end{proof}
\begin{proposition}
Let $\mathscr{N}$ be the Néron model of $\Pic^0_{C/K}$. The canonical morphism
$$\Pic^0_{\mathscr{C}/\Og_K}\to \mathscr{N}^0$$ is an isomorphism. In particular, $\Pic^0_{\mathscr{C}/\Og_K}$ is representable by a scheme.
\end{proposition}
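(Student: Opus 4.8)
The plan is to construct a canonical $\Og_K$-morphism $\varphi\colon\Pic^0_{\mathscr{C}/\Og_K}\to\mathscr{N}^0$ and prove it is an isomorphism, reducing via the exact sequence of the previous Lemma and via Theorem \ref{Neronexacttheorem} to the corresponding statement for the regular model $\tilde{\mathscr{C}}$, which is the classical theorem of \cite{BLR}, Chapter 9.5. First I would record that $\Pic^0_{\mathscr{C}/\Og_K}$ is a smooth, separated group scheme of finite type over $\Og_K$: by the preceding Lemma it is an extension $0\to\mathscr{T}^0\to\Pic^0_{\mathscr{C}/\Og_K}\to\Pic^0_{\tilde{\mathscr{C}}/\Og_K}\to 0$ of fppf sheaves, and since $\mathscr{T}^0$ is a smooth affine group scheme of finite type (Corollary \ref{torusneroncorollary}) while $\Pic^0_{\tilde{\mathscr{C}}/\Og_K}$ is smooth, separated and of finite type (\cite{BLR}, Chapter 9.5), the projection $\Pic^0_{\mathscr{C}/\Og_K}\to\Pic^0_{\tilde{\mathscr{C}}/\Og_K}$ is a torsor under $\mathscr{T}^0$, hence smooth, affine and of finite type, so that $\Pic^0_{\mathscr{C}/\Og_K}$ has these three properties over $\Og_K$ as well. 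A smooth separated group algebraic space of finite type over the trait $\Spec\Og_K$ is a scheme by a theorem of Anantharaman (\cite{An}), which already settles the final assertion of the Proposition; moreover, the N\'eron mapping property of $\mathscr{N}$ now produces a morphism $\varphi$ extending $\mathrm{id}_{\Pic^0_{C/K}}$ on generic fibres, and since the source of $\varphi$ has connected fibres, $\varphi$ factors through $\mathscr{N}^0$.

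Next I would show that $\varphi$ is a monomorphism. Recall from the proof of Theorem \ref{Neronexacttheorem} that $\mathscr{T}\to\mathscr{N}$ is a closed immersion, that $\mathscr{N}\to\tilde{\mathscr{N}}$ is the quotient by $\mathscr{T}$, and that $\tilde{\mathscr{N}}=P_{\tilde{\mathscr{C}}}^{\mathrm{sep}}$ has $\Pic^0_{\tilde{\mathscr{C}}/\Og_K}$ as its identity component (\cite{BLR}, Chapter 9.5). Comparing generic fibres and invoking the uniqueness clause of the N\'eron mapping property---any two $\Og_K$-morphisms from a smooth $\Og_K$-scheme into $\mathscr{N}$, or into $\tilde{\mathscr{N}}$, that agree generically coincide---one finds that $\varphi$ restricts on $\mathscr{T}^0$ to the closed immersion $\mathscr{T}^0\hookrightarrow\mathscr{T}\hookrightarrow\mathscr{N}$, so $\ker\varphi\cap\mathscr{T}^0=0$, and that the composite $\Pic^0_{\mathscr{C}/\Og_K}\xrightarrow{\varphi}\mathscr{N}^0\hookrightarrow\mathscr{N}\to\tilde{\mathscr{N}}$ equals $\Pic^0_{\mathscr{C}/\Og_K}\to\Pic^0_{\tilde{\mathscr{C}}/\Og_K}\xrightarrow{\sim}\tilde{\mathscr{N}}^0\hookrightarrow\tilde{\mathscr{N}}$, so $\ker\varphi$ is annihilated by the surjection $\Pic^0_{\mathscr{C}/\Og_K}\to\Pic^0_{\tilde{\mathscr{C}}/\Og_K}$ and hence lies in $\mathscr{T}^0$ by exactness of the Lemma's sequence. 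Combining the two, $\ker\varphi=0$.

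Finally I would upgrade $\varphi$ to an isomorphism fibre by fibre. On the generic fibre it is the identity. On the special fibre $\varphi_k$ is a homomorphism of smooth group schemes over $k$ with trivial kernel, hence a closed immersion; and since $\Pic^0_{\mathscr{C}/\Og_K}$ and $\mathscr{N}^0$ are smooth of finite type over $\Spec\Og_K$ with connected fibres (for the source this follows from the Lemma, an extension of connected by connected being connected), both special fibres are irreducible of dimension $\dim\Pic^0_{C/K}$, so $\varphi_k$ identifies $\Pic^0_{\mathscr{C}/\Og_K}\times_{\Og_K}k$ with a closed subscheme of $\mathscr{N}^0_k$ of full dimension, hence with all of $\mathscr{N}^0_k$; thus $\varphi_k$ is an isomorphism. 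Then $\varphi$ is a morphism of finite-type $\Og_K$-schemes between flat $\Og_K$-schemes which is an isomorphism on each fibre, hence flat; a flat monomorphism locally of finite presentation is an open immersion, and an open immersion which is surjective (being an isomorphism on both fibres of $\Spec\Og_K$) is an isomorphism. Therefore $\varphi$ is an isomorphism. The only delicate point is not geometric but bookkeeping: one must verify that every comparison morphism appearing above is the canonical one, which is exactly what uniqueness in the N\'eron mapping property guarantees. Theorem \ref{Neronexacttheorem} and the preceding Lemma carry all the substantive weight, and the present argument merely reduces the Proposition to the regular case treated in \cite{BLR}.
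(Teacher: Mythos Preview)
Your argument is correct, but it follows a different route from the paper's. The paper does not first establish that $\Pic^0_{\mathscr{C}/\Og_K}$ is a separated scheme and then invoke the N\'eron mapping property; rather, it works entirely with the commutative diagram
\[
\begin{CD}
0@>>>\mathscr{T}^0@>>>\Pic^0_{\mathscr{C}/\Og_K}@>>>\Pic^0_{\tilde{\mathscr{C}}/\Og_K}@>>>0\\
&&@VVV@VVV@VVV\\
0@>>>\mathscr{T}@>>>\mathscr{N}@>>>\tilde{\mathscr{N}}@>>>0
\end{CD}
\]
and the induced exact sequence of cokernels $0\to\mathscr{E}_1\to\mathscr{E}_2\to\mathscr{E}_3\to 0$. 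Since $\mathscr{E}_1$ and $\mathscr{E}_3$ are \'etale over $\Og_K$ with trivial generic fibre, their unit sections are open immersions; the paper then argues that the unit section of $\mathscr{E}_2$ is representable by open immersions, whence $\Pic^0_{\mathscr{C}/\Og_K}\to\mathscr{N}$ is itself an open immersion, and connectedness of the special fibre forces the image to be $\mathscr{N}^0$. Representability of $\Pic^0_{\mathscr{C}/\Og_K}$ is then a \emph{consequence} of the isomorphism, not an input.

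Your approach buys a concrete handle on $\Pic^0_{\mathscr{C}/\Og_K}$ before comparing it to $\mathscr{N}^0$: the torsor argument over $\Pic^0_{\tilde{\mathscr{C}}/\Og_K}$ already gives smoothness, separatedness, and (via affineness of $\mathscr{T}^0$) representability without appealing to Anantharaman, so that step could be shortened. The paper's approach is slicker in that it avoids the monomorphism and fibre-by-fibre verifications altogether by reading off ``open immersion'' directly from the \'etaleness of $\mathscr{E}_1$ and $\mathscr{E}_3$. Both reductions rest on the same substantive inputs: the Lemma's exact sequence, Theorem~\ref{Neronexacttheorem}, and the regular case from \cite{BLR}, Chapter 9.5.
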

\begin{proof}
Consider the commutative diagram of fppf-sheaves
$$\begin{CD}
0@>>>\mathscr{T}^0@>>>\Pic^0_{\mathscr{C}/\Og_K}@>>>\Pic^0_{\tilde{\mathscr{C}}/\Og_K}@>>>0\\
&&@VVV@VVV@VVV\\
0@>>>\mathscr{T}@>>>\mathscr{N}@>>>\tilde{\mathscr{N}}@>>>0\\
&&@VVV@VVV@VVV\\
0@>>>\mathscr{E}_1@>>>\mathscr{E}_2@>>>\mathscr{E}_3@>>>0,
\end{CD}$$
with exact rows, where the $\mathscr{E}_j$ are defined to be the cokernels of the vertical maps. We already know that $\mathscr{E}_1$ and $\mathscr{E}_3$ are étale group schemes over $\Og_K$ with trivial generic fibre (the case of $\mathscr{E}_3$ follows because the map $\Pic^0_{\tilde{\mathscr{C}}/\Og_K}\to \tilde{\mathscr{N}}$ induces an isomorphism between $\Pic^0_{\tilde{\mathscr{C}}/\Og_K}$ and the identity component $\tilde{\mathscr{N}}^0$ of $\tilde{\mathscr{N}}$; see \cite{BLR}, Chapter 9.5, Theorem 4(b); the greatest common divisor of the multiplicities of the irreducible components of $\tilde{\mathscr{C}}_k$ is equal to 1 by the argument from the proof of Lemma \ref{regularcohomologicallyflatlemma}, and there is no difference between multiplicity and geometric multiplicity since the residue field $k$ is algebraically closed). In particular, the unit sections of $\mathscr{E}_1$ and $\mathscr{E}_3$ are open immersions. This implies that the morphism $\mathscr{E}_1\to\mathscr{E}_2$ is representable by open immersions. Therefore, the unit section of $\mathscr{E}_2$ is representable by open immersions as well. In particular, the morphism $$\Pic^0_{\mathscr{C}/\Og_K}\to\mathscr{N}$$ is representable by an open immersion. Since the special fibre of $\Pic^0_{\mathscr{C}/\Og_K}$ is connected by construction, the claim follows.
\end{proof}\\
For the next proposition, let $\Phi(-)$ denote the group of connected components of the special fibre of a group scheme over $\Og_K$.
\begin{proposition}
Keep the notation from the previous lemmata and propositions. The sequences \label{Identitycomponentexactproposition}
$$0\to \mathscr{T}^0\to\mathscr{N}^0\to\tilde{\mathscr{N}}^0\to 0$$ and
$$0\to \Phi(\mathscr{T})_{\mathrm{tors}}\to \Phi(\mathscr{N})_{\mathrm{tors}}\to \Phi(\tilde{\mathscr{N}})_{\mathrm{tors}}\to 0$$ are exact.
\end{proposition}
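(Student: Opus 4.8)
The plan is to deduce both sequences from the exactness of the sequence of N\'eron models established in Theorem~\ref{Neronexacttheorem}, together with the exact sequence of the preceding lemma and proposition and a base change to the special fibre; the whole argument amounts to a single application of the snake lemma.

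First I would record the exactness of the sequence $0\to\mathscr{T}^0\to\mathscr{N}^0\to\tilde{\mathscr{N}}^0\to 0$. The lemma preceding this proposition gives the exact sequence $0\to\mathscr{T}^0\to\Pic^0_{\mathscr{C}/\Og_K}\to\Pic^0_{\tilde{\mathscr{C}}/\Og_K}\to 0$ of fppf sheaves; the proposition preceding it identifies $\Pic^0_{\mathscr{C}/\Og_K}$ with $\mathscr{N}^0$, and $\Pic^0_{\tilde{\mathscr{C}}/\Og_K}\cong\tilde{\mathscr{N}}^0$ by \cite{BLR}, Chapter~9.5, Theorem~4(b). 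What remains is to check that, under these identifications, the arrows of the first sequence are the restrictions to the identity components of the arrows $\mathscr{T}\to\mathscr{N}\to\tilde{\mathscr{N}}$ of Theorem~\ref{Neronexacttheorem}. This is a routine rigidity argument: both candidate morphisms $\mathscr{T}^0\to\mathscr{N}$ (resp.\ $\mathscr{N}^0\to\tilde{\mathscr{N}}$) agree on the generic fibre with the corresponding morphism of the Chevalley sequence of $\Pic^0_{C/K}$, and since $\mathscr{T}^0$ and $\mathscr{N}^0$ are flat over $\Og_K$ with schematically dense generic fibre while $\mathscr{N}$ and $\tilde{\mathscr{N}}$ are separated, they coincide. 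Hence the first sequence is exact and forms the top row of a commutative diagram whose middle row is the sequence of Theorem~\ref{Neronexacttheorem} and whose columns are the open immersions $\mathscr{T}^0\hookrightarrow\mathscr{T}$, $\mathscr{N}^0\hookrightarrow\mathscr{N}$, $\tilde{\mathscr{N}}^0\hookrightarrow\tilde{\mathscr{N}}$.

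Next I would apply the snake lemma to this map of short exact sequences of fppf sheaves. Since the three vertical maps are monomorphisms, all the relevant kernels vanish, so one obtains an exact sequence of cokernels $0\to\mathscr{E}_1\to\mathscr{E}_2\to\mathscr{E}_3\to 0$, where $\mathscr{E}_1=\mathscr{T}/\mathscr{T}^0$, $\mathscr{E}_2=\mathscr{N}/\mathscr{N}^0$ and $\mathscr{E}_3=\tilde{\mathscr{N}}/\tilde{\mathscr{N}}^0$ are the component group schemes; these are \'etale, hence flat, over $\Og_K$, with trivial generic fibres. Pulling this sequence back along $\Spec k\to\Spec\Og_K$ preserves exactness (right-exactness is automatic, and left-exactness follows from the flatness of $\mathscr{E}_3$), and identifies the three terms with the component groups $\Phi(\mathscr{T})$, $\Phi(\mathscr{N})$ and $\Phi(\tilde{\mathscr{N}})$, using that formation of the quotient by a flat normal subgroup commutes with base change and that $\mathscr{T}^0_k=(\mathscr{T}_k)^0$, and likewise for $\mathscr{N}$ and $\tilde{\mathscr{N}}$. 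Thus $0\to\Phi(\mathscr{T})\to\Phi(\mathscr{N})\to\Phi(\tilde{\mathscr{N}})\to 0$ is exact.

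Finally, to recover the statement as written it suffices to observe that all three component groups are finite, so that each coincides with its own torsion subgroup: $\Phi(\mathscr{T})\cong\Z/[L:K]\Z$ by Corollary~\ref{torusneroncorollary}(ii), $\Phi(\tilde{\mathscr{N}})$ is finite because $\tilde{\mathscr{N}}$ is the N\'eron model of an abelian variety, and $\mathscr{N}$ is of finite type over $\Og_K$ by its construction in Theorem~\ref{Neronexacttheorem}, so $\Phi(\mathscr{N})$ is finite as well (alternatively, it is an extension of a finite group by a finite group). I do not anticipate a serious obstacle: the only points that require care are the compatibility of the two rows of the diagram, handled by the rigidity argument above, and the exactness of the reduction to the special fibre, which rests on the flatness of the component group schemes over $\Og_K$.
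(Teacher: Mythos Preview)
Your proposal is correct and follows essentially the same approach as the paper: the paper also derives the first sequence from the preceding lemma and proposition (together with \cite{BLR}, Chapter 9.5, Theorem 4(b)), then obtains exactness of $0\to\Phi(\mathscr{T})\to\Phi(\mathscr{N})\to\Phi(\tilde{\mathscr{N}})\to 0$ from this and Theorem~\ref{Neronexacttheorem}, and finally notes that all three component groups are torsion. You have simply spelled out in detail the steps the paper leaves as ``follows immediately''---the rigidity check for compatibility of the arrows, the snake lemma, and the passage to the special fibre.
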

\begin{proof}
The first assertion follows immediately from the previous two results. It also follows immediately that $0\to \Phi(\mathscr{T})\to \Phi(\mathscr{N})\to \Phi(\tilde{\mathscr{N}})\to 0$ is exact. However, since all component groups are torsion in our case, the Proposition follows. 
\end{proof}
\section{Applications to jumps and motivic Zeta functions}
\subsection{Applications to jumps}
Let us now apply our results to the study of jumps and motivic Zeta functions of the semiabelian varieties $\Pic^0_{C/K}$, where $C$ has precisely one $K$-rational push-out singularity. 
If $A$ is the Jacobian of a smooth proper curve of index one over $K$, then the jumps are rational numbers, and they only depend on the special fibre of a minimal sncd-model of the curve. This was proved in most cases by Halle \cite{Ha} and generalized in \cite{HN}, Chapter 6.3. In what follows, we will generalize this statement to the case of curves with push-out singularities. 
Consider the following 
\begin{definition}
Let $0\to T\to B\to A\to 0$ be an exact sequence of semiabelian varieties over $K$, and denote by $\mathscr{T}(d)$, $\mathscr{B}(d)$, and $\mathscr{A}(d)$ the Néron (lft-)models of $T\times_K\Spec K(d)$, $B\times_K\Spec K(d)$, and $A\times_K\Spec K(d)$, respectively. Also let $e\in \N$. We say that the sequence is \it $e$-universally exact over $\Og_K$ \rm if for all $d$ prime to $ep$ the sequence of Néron models
$$0\to \mathscr{T}(d)\to\mathscr{B}(d)\to \mathscr{A}(d)\to 0$$ is exact. We say that the sequence is \it universally exact over $\Og_K$ \rm if we can take $e=1$. 
\end{definition}
\begin{proposition}
Let $e\in\N$. Let $0\to T\to B\to A\to 0$ be an exact sequence of semiabelian varieties over $K$ which is $e$-universally exact over $\Og_K$. Let $\mathcal{J}_B$ be the multiset of jumps of $B$, and similarly for $T$ and $A$. Then we have \label{jumpsunionproposition}
$$\mathcal{J}_B=\mathcal{J}_T\cup\mathcal{J}_A$$
as multisets.
\end{proposition}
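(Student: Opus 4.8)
The plan is to reduce Proposition \ref{jumpsunionproposition} to its $d$-jump analogue, Proposition \ref{djumpsproposition}, and then pass to the limit defining the jumps. Throughout, note that $\dim B = \dim T + \dim A =: g$, and write the jumps of $B$, $T$, $A$ in increasing order as $\mathcal{J}_B = (j^B_1 \le \cdots \le j^B_g)$, $\mathcal{J}_T$, $\mathcal{J}_A$, and similarly for the $d$-jumps.

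First I would choose a good tower of tame extensions. Arguing exactly as in the proof of Corollary \ref{inducedjumpscorollary}, pick a tower $K \subseteq K(d_1) \subseteq K(d_2) \subseteq \cdots$ of finite tame extensions all of whose degrees $d_n$ are prime to $e$ (hence to $ep$), and which is cofinal in the set of finite tame extensions of $K$ of degree prime to $e$; such a tower exists, e.g. by enumerating the primes not dividing $ep$ and taking products of increasing prime powers. By the description of the jumps of an arbitrary semiabelian variety in terms of Edixhoven's filtration (Proposition \ref{filtrationproposition}, which holds far beyond the case of induced tori) together with the density of $\Z_{\langle e\rangle}\cap\Z_{\langle p\rangle}\cap[0,1[$ in $\Z_{\langle p\rangle}\cap[0,1[$, the jumps of $B$, $T$ and $A$ may all be computed along this tower: $j^B_i = \lim_{n\to\infty} j^B_{d_n,i}/d_n$, and similarly for $T$ and $A$.

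Then I would work at a fixed level $n$. Since the sequence $0 \to T \to B \to A \to 0$ is $e$-universally exact and $d_n$ is prime to $ep$, the sequence of N\'eron lft-models $0 \to \mathscr{T}(d_n) \to \mathscr{B}(d_n) \to \mathscr{A}(d_n) \to 0$ is exact, so Proposition \ref{djumpsproposition} applies and gives $\mathcal{I}^{d_n}_B = \mathcal{I}^{d_n}_T \cup \mathcal{I}^{d_n}_A$ as multisets; equivalently, the sorted tuple of $d_n$-jumps of $B$ is the increasing merge of the sorted tuples of $d_n$-jumps of $T$ and of $A$.

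Finally I would let $n \to \infty$. Let $\mathrm{ms}$ denote the merge-and-sort map, sending a pair consisting of a $(\dim T)$-tuple and a $(\dim A)$-tuple of reals to the increasing rearrangement of their disjoint union (a $g$-tuple); this map is continuous (indeed $1$-Lipschitz for the sup-norm). Dividing the multiset identity of the previous step by $d_n$, the tuple $(j^B_{d_n,i}/d_n)_i$ equals $\mathrm{ms}((j^T_{d_n,i}/d_n)_i, (j^A_{d_n,k}/d_n)_k)$. As $n \to \infty$ the left-hand side converges to $\mathcal{J}_B$ by the first step, while by continuity of $\mathrm{ms}$ and the first step the right-hand side converges to $\mathrm{ms}(\mathcal{J}_T, \mathcal{J}_A)$, the increasing merge of $\mathcal{J}_T$ and $\mathcal{J}_A$; hence $\mathcal{J}_B = \mathcal{J}_T \cup \mathcal{J}_A$ as multisets. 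I expect the only point needing genuine care to be the first step — that one may restrict to degrees prime to $e$ and still recover the jumps — which is precisely why the filtration-theoretic description of the jumps must be used in place of the bare definition via an arbitrary cofinal tower; the remaining steps are Proposition \ref{djumpsproposition} together with the elementary fact that merging finitely many convergent sequences commutes with taking limits.
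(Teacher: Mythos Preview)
Your proposal is correct and follows essentially the same approach as the paper: restrict to a cofinal tower of tame extensions of degree prime to $ep$ (justified via Edixhoven's filtration and density, exactly as in the proof of Corollary~\ref{inducedjumpscorollary}), apply Proposition~\ref{djumpsproposition} at each level, and pass to the limit. Your passage to the limit via continuity of the merge-and-sort map is in fact tidier than the paper's subsequence extraction (which first shows the set-theoretic inclusion and then remarks that multiplicities clearly coincide), but the underlying strategy is the same.
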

\begin{proof}
Choose a tower $K\subseteq K(d_1)\subseteq K(d_2)\subseteq ...$ of finite extensions of $K$ with $d_j$ prime to $ep$ which is cofinal in the set of all finite tame extensions of $K$ of degree prime to $ep$. For each $d$ prime to $p$, let $\alpha_d\colon \N_{\leq \dim A}\to \N_0$ be the increasing function whose image are precisely the $d$-jumps of $A$. Similarly define functions $\beta_d\colon \N_{\leq \dim B}\to\N_0$ (for $\mathscr{B}(d)$) and $\tau_d$ (for $\mathscr{T}(d)$). Suppose $x\in \mathcal{J}_B.$ By definition, there is $i\in \{1,..., \dim B\},$ such that $x=\lim_{n\to \infty} \beta_{d_n}(i)/{d_n}.$ By Proposition \ref{djumpsproposition}, we have 
$$\beta_{d_n}(\N_{\leq \dim B})=\tau_{d_n}(\N_{\leq \dim T})\cup \alpha_{d_n}(\N_{\leq \dim A})$$ as sets. Hence we may assume, without loss of generality, that for each $1\leq i\leq \dim B$ there are infinitely many $n$ such that $\beta_{d_n}(i)= \alpha_{d_n}(l)$ for some $1\leq l\leq \dim A.$ This implies that 
$$\lim_{n\to\infty} \frac{\beta_{d_n}(i)}{d_n}=\lim_{n\to\infty} \frac{\alpha_{d_n}(l)}{d_n},$$ so the inclusion "$\subseteq$" of sets follows, and the other inclusion can be deduced completely analogously. Here, we used the fact that the jumps can be calculated by considering only finite tame extensions of $K$ of degree prime to $e$; the argument is the same as in the proof of Corollary \ref{inducedjumpscorollary}, or \cite{HN}, proof of Theorem 6.3.1.3. It is clear that the multiplicities also coincide, so the result follows. 
\end{proof}
\begin{theorem} Let $C$ be a proper, geometrically integral curve over the field $K$. Assume that $C$ has precisely one singular point, which we assume to be a push-out singularity as before. Also assume that the normalization $\tilde{C}$ of $C$ has index one. Then the jumps of the semiabelian variety $\Pic^0_{C/K}$ are rational. Moreover, they only depend on the degree of the extension $L/K$ and the special fibre of a minimal sncd-model of $\tilde{C}$.  
\end{theorem}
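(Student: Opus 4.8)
The plan is to reduce the statement to the two cases that are already known --- induced tori and Jacobians of smooth curves of index one --- using the exactness of N\'eron models from Theorem~\ref{Neronexacttheorem}. Write $B:=\Pic^0_{C/K}$. The Chevalley decomposition of $B$ is precisely the generic fibre of the sequence in Theorem~\ref{Neronexacttheorem},
$$0\to T\to B\to \Pic^0_{\tilde{C}/K}\to 0,$$
with toric part $T=(\Res_{L/K}\Gm)/\Gm$ and Abelian part $\Pic^0_{\tilde{C}/K}$. I will show that this sequence is $e$-universally exact over $\Og_K$ for a suitable $e\in\N$; Proposition~\ref{jumpsunionproposition} then gives $\mathcal{J}_B=\mathcal{J}_T\cup\mathcal{J}_{\Pic^0_{\tilde{C}/K}}$ as multisets, and it remains to control the two multisets on the right.

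For the toric part, consider the exact sequence $0\to\Gm\to\Res_{L/K}\Gm\to T\to 0$, whose kernel is split and remains split after every base change $K\subseteq K(d)$; by the argument of \cite{CY}, Lemma~11.2 (as recalled in the subsection on Chai's conjecture), the associated sequence of N\'eron lft-models is then exact over every $\Og_{K(d)}$, so the sequence is universally exact. Since $\Gm$ has good reduction, hence jump $0$, Proposition~\ref{jumpsunionproposition} together with Corollary~\ref{inducedjumpscorollary} gives $\{0\}\cup\mathcal{J}_T=\mathcal{J}_{\Res_{L/K}\Gm}=\{0,\tfrac{1}{[L:K]},\dots,\tfrac{[L:K]-1}{[L:K]}\}$, whence $\mathcal{J}_T=\{\tfrac{1}{[L:K]},\dots,\tfrac{[L:K]-1}{[L:K]}\}$; in particular $\mathcal{J}_T$ is rational and depends only on $[L:K]$. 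For the Abelian part, $\Pic^0_{\tilde{C}/K}$ is the Jacobian of the smooth proper curve $\tilde{C}$ of index one, whose jumps are rational and depend only on the special fibre of a minimal sncd-model of $\tilde{C}$ by \cite{HN}, Chapter~6.3 (building on \cite{Ha}).

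The heart of the proof --- and the step I expect to be the main obstacle --- is the $e$-universal exactness of the Chevalley sequence, for one must understand $C\times_K K(d)$ well enough to recognise it as a curve in the scope of Theorem~\ref{Neronexacttheorem}. Since forming a push-out commutes with the flat base change $\Spec K(d)\to\Spec K$ (Lemma~\ref{basechangelemma}), $C\times_K K(d)$ is the push-out of the smooth proper geometrically integral curve $\tilde{C}\times_K K(d)$ along $\Spec(L\otimes_K K(d))$. The decisive point is that $L\otimes_K K(d)$ is again a field for all $d$ in a cofinal family of tame extensions: I would take $\delta\in\N$ with $K(\delta)\subseteq L$ and $L/K(\delta)$ purely wild, so $[L:K(\delta)]$ is a power of $p$, and restrict to $d$ prime to $\delta p$. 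Then $K(\delta)\otimes_K K(d)=K(\delta d)$ because $\gcd(\delta,d)=1$, so $L\otimes_K K(d)=L\otimes_{K(\delta)}K(\delta d)$; writing a uniformizer of $K(\delta d)$ over $K(\delta)$ as a root of $X^{d}-\pi_\delta$ and $\pi_\delta=u\pi_L^{[L:K(\delta)]}$ with $u\in\Og_L^\times$, the fact that $\gcd(d,[L:K(\delta)])=1$ forces (by a valuation argument: any root has $L$-valuation $[L:K(\delta)]/d\notin\Z$) the polynomial $X^{d}-u\pi_L^{[L:K(\delta)]}$ to be irreducible over $L$, so $L\otimes_K K(d)$ is a field.

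Granting this, for $d$ prime to $\delta p$ the curve $C\times_K K(d)$ again has a single, $K(d)$-rational, push-out singularity --- now along the finite separable extension $L\otimes_K K(d)$ of $K(d)$ --- and its normalization $\tilde{C}\times_K K(d)$ is smooth, proper, geometrically integral, and still of index one, since a degree-one $0$-cycle on $\tilde{C}$ pulls back to one on $\tilde{C}\times_K K(d)$. As $\Og_{K(d)}$ is again a strictly Henselian complete discrete valuation ring with residue field $k$, Theorem~\ref{Neronexacttheorem} applies over $\Og_{K(d)}$ and shows that $0\to\mathscr{T}(d)\to\mathscr{N}(d)\to\tilde{\mathscr{N}}(d)\to 0$ is exact, where $\mathscr{T}(d)$ is the N\'eron model of $T\times_K K(d)=(\Res_{L\otimes_K K(d)/K(d)}\Gm)/\Gm$. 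Thus the Chevalley sequence of $B$ is $\delta$-universally exact over $\Og_K$, and Proposition~\ref{jumpsunionproposition} yields $\mathcal{J}_B=\mathcal{J}_T\cup\mathcal{J}_{\Pic^0_{\tilde{C}/K}}$. Combined with the second paragraph, this shows the jumps of $\Pic^0_{C/K}$ are rational and depend only on $[L:K]$ and the special fibre of a minimal sncd-model of $\tilde{C}$.
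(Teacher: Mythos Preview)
Your argument is correct and follows essentially the same route as the paper: establish $e$-universal exactness of the Chevalley sequence by recognising $C\times_K K(d)$ as a push-out curve over $K(d)$ (via Lemma~\ref{basechangelemma} and the field property of $L\otimes_K K(d)$), apply Theorem~\ref{Neronexacttheorem} over each $\Og_{K(d)}$, and invoke Proposition~\ref{jumpsunionproposition} together with Corollary~\ref{inducedjumpscorollary} and the Halle--Nicaise results for Jacobians of smooth curves. The paper takes $e=[L:K]$ while you take $e=\delta$, but since $[L:K]/\delta$ is a $p$-power these conditions on $d$ are equivalent; you also supply details the paper leaves implicit (the irreducibility argument for $L\otimes_K K(d)$, the index-one check for $\tilde{C}\times_K K(d)$, and the explicit determination of $\mathcal{J}_T$ via the universally exact sequence $0\to\Gm\to\Res_{L/K}\Gm\to T\to 0$).
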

\begin{proof}
By Proposition \ref{jumpsunionproposition}, \cite{HN}, Theorem 6.3.1.3, and \cite{HN}, Corollary 6.3.1.5, all we have to show is that the exact sequence
$$0\to T\to \Pic^0_{C/K}\to \Pic^0_{\tilde{C}/K}\to 0$$ is $[L:K]$-universally exact over $\Og_K$. This follows, however, because the base change of the sequence above to $\Spec K(d)$ coincides with the sequence
$$0\to T\times_K\Spec K(d)\to \Pic^0_{C\times_K\Spec K(d)/K(d)}\to \Pic^0_{\tilde{C}\times_K\Spec K(d)/K(d)}\to 0.$$ Here we use that $\Pic^0_{C/K}\times_K\Spec K(d)=\Pic^0_{C\times_K\Spec K(d)/K(d)}$ (and similarly for $\tilde{C}$), that the curve $C\times_K\Spec K(d)$ still has a push-out singularity by Lemma \ref{basechangelemma}, and that $L\otimes_KK(d)$ is a field whenever $d$ is prime to $p[L:K]$. Further, we use that 
$$(\Res_{L/K}\Gm)/\Gm\times_K\Spec K(d)=(\Res_{L\otimes_KK(d)/K(d)}\Gm)/\Gm,$$ and Theorem \ref{Neronexacttheorem}.
\end{proof}
\subsection{Applications to motivic Zeta functions}
Let us finally turn to the study of the motivic Zeta function associated to the semiabelian variety $\Pic^0_{C/K}.$ First, let us recall some definitions and some basic properties; the main reference being \cite{HN}, Chapter 8.
We also have to define other invariants of a semiabelian variety $B$ over $K$. The definitions are the same as in \cite{HN}. Furthermore, we shall from now on assume that the extension $L/K$ along which the push-out is performed is purely wild (i.e. that $[L:K]$ is a power of $p$). 
\begin{definition}
(i) For each $d$ prime to $p$, define $u(B(d))$ to be the unipotent rank of $\mathscr{B}(d)^0_k$ (that is, the dimension of the unipotent part of this algebraic group), and $t(G(d))$ to be the toric rank of $\mathscr{B}(d)^0_k.$\\
(ii) Define the \it potential toric rank $t_{tame}(B)$ of $B$ \rm to be 
$$t_{tame}(B):=\max_{p\nmid d}\ t(B(d)).$$
\end{definition}
Observe that, if the sequence $0\to T\to B\to A\to 0$ of semiabelian varieties over $K$ is $e$-universally exact over $\Og_K$ (for any $e\in\N$), then we have
$$c_{tame}(B)=c_{tame}(T)+c_{tame}(A)$$ (see definition \ref{elementarydefinition}). This is a direct consequence of Proposition \ref{jumpsunionproposition}. Furthermore denote by $\Phi(\mathscr{B}(d))$ the group of connected components of $\mathscr{B}(d)_k$. Also recall the definition of the \it stabilization index \rm $e(\tilde{C})$ of a smooth proper curve $\tilde{C}$ over $K$: It is defined to be the lowest common multiple of the multiplicities of the principal components of the special fibre of a minimal sncd-model of $C$ (see \cite{HN}, Definition 4.2.2.3). Let us now generalize this definition to allow for curves with push-out singularities:
\begin{definition}
Let $C$ be a proper, geometrically integral curve over $K$ with precisely one singular point, which we assume to arise as a push-out as before (recall that we assume the finite extension $L/K$ to be purely wild). Denote by $\tilde{C}$ the normalization of $C$. Define the \it stabilization index \rm $e(C)$ of $C$ to be 
$$e(C):=\mathrm{lcm}(e(\tilde{C}), [L:K]).$$
\end{definition}
\begin{lemma} Let $C$ be as before, and let $p\nmid d$. Denote by $\mathscr{N}(d)$ and $\tilde{\mathscr{N}}(d)$ the Néron models of $\Pic^0_{C\times_K K(d)}$ and $\Pic^0_{\tilde{C}\times_KK(d)}$ over $\Og_{K(d)}$, respectively. Further denote by $\Phi(-)$ the group of connected components of a Néron model. Let $\alpha\in \{1,..., e(C)\}$ be prime to $p$ and define $\alpha':=\mathrm{gcd}(\alpha, e(C))=\mathrm{gcd}(\alpha, e(\tilde{C})).$ Then we have\\
(i) $e(C(\alpha'))=e(C)/\alpha',$\\
(ii) $[\mathscr{N}(\alpha+qe(C))_k^0)]=[\mathscr{N}(\alpha')_k^0]$ in $K_0(\mathrm{Var}_k),$\\
(iii) $\#\Phi(\mathscr{N}(\alpha+qe(C)))_{\mathrm{tors}}=((\alpha+qe(C))/\alpha')^{t(\Pic^0_{\tilde{C}/K}\times_KK(\alpha'))}\#\Phi(\mathscr{N}(\alpha'))_{\mathrm{tors}}.$
\end{lemma}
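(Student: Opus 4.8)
The plan is to reduce each of (i)--(iii) to the corresponding known statement for the smooth curve $\tilde{C}$ (from \cite{HN}), combined with the explicit description of $\mathscr{T}$ in Section 3. The preliminary observation driving everything is that, for $d$ prime to $p$, the algebra $L\otimes_K K(d)$ is a field: since $[L:K]$ is a power of $p$, ``prime to $p$'' is the same as ``prime to $p[L:K]$''. Hence, by Lemma \ref{basechangelemma} together with the fact that normalisation commutes with the \'etale base change $\Spec K(d)\to\Spec K$, the curve $C(d):=C\times_K\Spec K(d)$ is again a proper, geometrically integral curve over $K(d)$ with exactly one push-out singularity, performed along the purely wild extension $L(d)/K(d)$ of degree $[L:K]$, whose normalisation is $\tilde{C}(d):=\tilde{C}\times_K\Spec K(d)$ (still of index one). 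As $K(d)$ satisfies our standing hypotheses, Theorem \ref{Neronexacttheorem}, Proposition \ref{Identitycomponentexactproposition} and Corollary \ref{torusneroncorollary} all apply, over $\Og_{K(d)}$, to $C(d)$ and to the torus $T(d)=(\Res_{L(d)/K(d)}\Gm)/\Gm=T\times_K K(d)$, for every $d$ prime to $p$.

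For (i): by the behaviour of the stabilization index under base change (\cite{HN}, Chapter~4), $e(\tilde{C}(\alpha'))=e(\tilde{C})/\alpha'$, since $\alpha'=\gcd(\alpha,e(\tilde{C}))$ divides $e(\tilde{C})$ and is prime to $p$. As $[L(\alpha'):K(\alpha')]=[L:K]$, this gives $e(C(\alpha'))=\mathrm{lcm}(e(\tilde{C})/\alpha',[L:K])$. Writing $e(\tilde{C})=p^{a}m$ with $p\nmid m$ and $[L:K]=p^{b}$, one has $\alpha'\mid m$, hence
$$\mathrm{lcm}\big(e(\tilde{C})/\alpha',[L:K]\big)=p^{\max(a,b)}\,(m/\alpha')=\mathrm{lcm}\big(e(\tilde{C}),[L:K]\big)/\alpha'=e(C)/\alpha'.$$
This step is purely elementary.

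For (ii) and (iii): put $n:=[L:K]$ and $d:=\alpha+qe(C)$, which is prime to $p$ because $p\mid n\mid e(C)$. The analogue of Proposition \ref{Identitycomponentexactproposition} for $C(d)$ over $K(d)$ furnishes exact sequences $0\to\mathscr{T}(d)^{0}_{k}\to\mathscr{N}(d)^{0}_{k}\to\tilde{\mathscr{N}}(d)^{0}_{k}\to 0$ of algebraic groups over $k$ and $0\to\Phi(\mathscr{T}(d))\to\Phi(\mathscr{N}(d))\to\Phi(\tilde{\mathscr{N}}(d))\to 0$ of finite abelian groups (all component groups here being finite, so the subscript ``$\mathrm{tors}$'' is vacuous). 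By Corollary \ref{torusneroncorollary}(i), $\mathscr{T}(d)^{0}\cong(\Res_{\Og_{L(d)}/\Og_{K(d)}}\Gm)/\Gm$, whose special fibre is $(\Res_{(k[\pi]/\langle\pi^{n}\rangle)/k}\Gm)/\Gm$; this group is independent of $d$, is isomorphic as a $k$-variety to $\mathbf{A}^{n-1}_{k}$, and is split unipotent (in particular a special group and of toric rank $0$). It follows that $[\mathscr{N}(d)^{0}_{k}]=\mathbf{L}^{n-1}[\tilde{\mathscr{N}}(d)^{0}_{k}]$ in $K_{0}(\mathrm{Var}_{k})$, that $t(\Pic^{0}_{C/K}\times_K K(d))=t(\Pic^{0}_{\tilde{C}/K}\times_K K(d))$, and, using Corollary \ref{torusneroncorollary}(ii), that $\#\Phi(\mathscr{N}(d))=n\cdot\#\Phi(\tilde{\mathscr{N}}(d))$. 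Since $e(\tilde{C})\mid e(C)$, we may write $d=\alpha+\big(qe(C)/e(\tilde{C})\big)e(\tilde{C})$ with $\gcd(d,e(\tilde{C}))=\gcd(\alpha,e(\tilde{C}))=\alpha'$, so the corresponding statements for the smooth curve $\tilde{C}$ (\cite{HN}, Chapter~8) read $[\tilde{\mathscr{N}}(d)^{0}_{k}]=[\tilde{\mathscr{N}}(\alpha')^{0}_{k}]$ and $\#\Phi(\tilde{\mathscr{N}}(d))=(d/\alpha')^{\,t(\Pic^{0}_{\tilde{C}/K}\times_K K(\alpha'))}\,\#\Phi(\tilde{\mathscr{N}}(\alpha'))$. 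Substituting into the two factorisations above, and using $\#\Phi(\mathscr{N}(\alpha'))=n\cdot\#\Phi(\tilde{\mathscr{N}}(\alpha'))$, yields (ii) and (iii).

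The substantive work is really the preliminary reduction and the accounting: one must verify carefully that $C(d)$ inherits all the running hypotheses so that the structural results of the preceding sections apply over $\Og_{K(d)}$; that $\mathscr{T}(d)^{0}_{k}$ contributes nothing to the toric rank, so that the exponent in (iii) may legitimately be read off the normalisation; and that the index $d=\alpha+qe(C)$ translates into the correct form for the known results on $\tilde{C}$, which is exactly where the identity $\gcd(\alpha,e(C))=\gcd(\alpha,e(\tilde{C}))$ is needed.
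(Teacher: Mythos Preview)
Your argument is correct and follows essentially the same route as the paper's own proof: both reduce each of (i)--(iii) to the corresponding statement for the smooth curve $\tilde{C}$ from \cite{HN} by combining the exact sequences of Proposition~\ref{Identitycomponentexactproposition} (applied over $K(d)$) with the explicit description of $\mathscr{T}(d)$ from Corollary~\ref{torusneroncorollary}. The only cosmetic differences are that you make the preliminary reduction (that $C(d)$ again has a single push-out singularity along a purely wild extension) more explicit, and that for (ii) you invoke ``split unipotent, hence special'' where the paper phrases the same fact as $H^1_{\mathrm{Zar}}\to H^1_{\fppf}$ being an isomorphism; the content is identical.
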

\begin{proof}
By definition and by \cite{HN}, Proposition 4.2.2.9, we have
\begin{align*}e(C(\alpha'))&=\mathrm{lcm}\Big(\frac{e(\tilde{C})}{\mathrm{gcd}(e(\tilde{C}),\alpha')},[L:K]\Big)\\
&=\frac{\mathrm{lcm}(e(\tilde{C}),[L:K])}{\mathrm{gcd}(e(\tilde{C}),\alpha')}\\&=e(C)/\alpha'.\end{align*}
For the second equation, we use that $\mathrm{gcd}(e(\tilde{C}), \alpha')$ and $[L:K]$ have no common factor since $L/K$ is assumed to be purely wild. This shows that (i) is true. For part (ii), consider the sequence
$$0\to \mathscr{T}(\beta)^0_k\to\mathscr{N}(\beta)^0_k\to\tilde{\mathscr{N}}(\beta)^0_k\to 0$$ for $\beta\in\N$ prime to $p,$ which is exact by Proposition \ref{Identitycomponentexactproposition}.  Since this exact sequence is a Zariski fibre bundle (because $H^1_{\mathrm{Zar}}(\tilde{\mathscr{N}}(\beta)^0_k,\mathscr{T}(\beta)^0_k)\to H^1_{\fppf}(\tilde{\mathscr{N}}(\beta)^0_k,\mathscr{T}(\beta)^0_k)$ is an isomorphism; see \cite{N}, Proof of Lemma 3.1), we find
$$[\mathscr{N}(\beta)^0_k]=[\mathscr{T}(\beta)^0_k][\tilde{\mathscr{N}}(\beta)^0_k]$$ in $K_0(\mathrm{Var}_k).$ This reduces the claim to proving the analogous statement for the torus $T$, since the case of Jacobians of smooth curves follows from \cite{HN}, Proposition 8.2.1.3. Observe further that, for all $\beta$ prime to $p$, we have
$$[\mathscr{T}(\beta)^0_k]=(\mathbf{L}-1)^{t(T(\beta))}\mathbf{L}^{\dim T-t(T(\beta))}.$$ Therefore, in order to prove statement (ii) for the toric part, we only have to show that the toric ranks of $\mathscr{T}(\alpha')^0_k$ and $\mathscr{T}(\alpha+qe(C))^0_k$ coincide. This, however, is clear since the toric rank of $\mathscr{T}(d)^0_k$ is equal to 0 for any $d$ prime to $p$. 
For part (iii), we observe that Corollary \ref{torusneroncorollary}(ii) implies
$$\#\Phi(\mathscr{T}(d))_{\mathrm{tors}}=[L:K]$$ for any $d$ prime to $p$. This implies that
$$\#\Phi(\mathscr{T}(\alpha+qe(C)))_{\mathrm{tors}}=\#\Phi(\mathscr{T}(\alpha'))_{\mathrm{tors}}.$$ Since we already know that 
$$\#\Phi(\tilde{\mathscr{N}}(\alpha+qe(C)))_{\mathrm{tors}}=((\alpha+qe(C))/\alpha')^{t(\Pic^0_{\tilde{C}/K}\times_KK(\alpha'))}\#\Phi(\tilde{\mathscr{N}}(\alpha'))_{\mathrm{tors}}$$ by \cite{HN}, Proposition 4.3.1.1, the result follows from Proposition \ref{Identitycomponentexactproposition}.
\end{proof}
\begin{lemma}
Let $C$ be as before. For each $d$ prime to $p$, put $\ord(d):=\ord_{\Pic^0_{C/K}}(d)$. Let $\alpha\in \{1,...,e(C)\}$ be prime to $p$. Then, for each $q\in \N$ such that $p\nmid(\alpha+qe(C))$, we have
$$\ord(\alpha+qe(C))=\ord(\alpha)+qe(C)c_{tame}(\Pic^0_{C/K}).$$
\end{lemma}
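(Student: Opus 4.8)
The plan is to deduce the quasi-periodicity of $\ord(-)=\ord_{\Pic^0_{C/K}}(-)$ from the corresponding facts for the toric and Abelian parts, via the exact sequence of N\'eron models. Write $B:=\Pic^0_{C/K}$ and $A:=\Pic^0_{\tilde C/K}$. Since $L/K$ is purely wild, $[L:K]$ is a power of $p$, so ``prime to $[L:K]p$'' is the same as ``prime to $p$''; hence the $[L:K]$-universal exactness of $0\to T\to B\to A\to 0$ established in the proof of the jumps theorem above is in fact plain universal exactness over $\Og_K$. By Proposition \ref{djumpsproposition} we then have $\ord_B(d)=\ord_T(d)+\ord_A(d)$ for every $d$ prime to $p$, and we have recorded above that $c_{tame}(B)=c_{tame}(T)+c_{tame}(A)$. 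So it suffices to prove, for $\alpha$ and $\alpha+qe(C)$ prime to $p$, the two identities $\ord_T(\alpha+qe(C))=\ord_T(\alpha)+qe(C)\,c_{tame}(T)$ and $\ord_A(\alpha+qe(C))=\ord_A(\alpha)+qe(C)\,c_{tame}(A)$; adding them gives the claim.

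For $T$, I would first note that $\ord_T=\ord_{\Res_{L/K}\Gm}$ and $c_{tame}(T)=c_{tame}(\Res_{L/K}\Gm)$: the sequence $0\to\Gm\to\Res_{L/K}\Gm\to T\to 0$ is universally exact over $\Og_K$ by \cite{CY}, Lemma 11.2 ($\Gm$ being split), $\Gm$ has good reduction (so $\ord_{\Gm}\equiv 0$ and its only jump is $0$), and one applies Proposition \ref{djumpsproposition} and Proposition \ref{jumpsunionproposition}. As $[L:K]\mid e(C)$, putting $q':=q\,e(C)/[L:K]\in\N$ gives $\alpha+qe(C)=\alpha+q'[L:K]$, so Proposition \ref{torusorderfunctionproposition} yields $\ord_T(\alpha+qe(C))=\ord_T(\alpha)+q'[L:K]\,c_{tame}(\Res_{L/K}\Gm)=\ord_T(\alpha)+qe(C)\,c_{tame}(T)$. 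For $A$, which is the Jacobian of a smooth proper curve of index one, the quasi-periodicity of $\ord_A$ with period $e(\tilde C)$ is known by \cite{HN}, Chapter 8; since $e(\tilde C)\mid e(C)$, iterating it $q\,e(C)/e(\tilde C)$ times gives the second identity. (Note that the hypothesis $p\nmid(\alpha+qe(C))$ is automatic here, since $p\mid e(C)$, so nothing goes wrong from $\alpha$ being assumed prime only to $p$ rather than to $e(C)$.)

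The step I expect to require most care is citing the order-function quasi-periodicity of $\Pic^0_{\tilde C/K}$ from \cite{HN}, Chapter 8 in a directly usable form --- it is the precise analogue, for Jacobians of smooth curves, of the statement being proved. An alternative that sidesteps this: the jumps of $\Pic^0_{\tilde C/K}$ lie in $\tfrac1{e(\tilde C)}\Z$ by \cite{HN}, Chapter 6.3, and those of $T$ are $\tfrac1{[L:K]},\dots,\tfrac{[L:K]-1}{[L:K]}$ (obtained as above from $0\to\Gm\to\Res_{L/K}\Gm\to T\to 0$ and Corollary \ref{inducedjumpscorollary}), so by Proposition \ref{jumpsunionproposition} all jumps $j_1\le\cdots\le j_g$ of $B$ lie in $\tfrac1{e(C)}\Z\cap[0,1[$; then the identity $\ord_B(d)=\sum_{l=1}^g\lfloor dj_l\rfloor$ for $d$ prime to $p$ (a consequence of the general form of Edixhoven's filtration, Proposition \ref{filtrationproposition}), together with $\lfloor(\alpha+qe(C))j_l\rfloor=\lfloor\alpha j_l\rfloor+qe(C)j_l$, gives the statement at once.
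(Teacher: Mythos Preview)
Your proof is correct and follows essentially the same route as the paper's: split $\ord_B$ and $c_{tame}(B)$ into toric and Abelian contributions via the (universal) exactness of $0\to T\to B\to A\to 0$, reduce $\ord_T$ to $\ord_{\Res_{L/K}\Gm}$ via \cite{CY}, Lemma 11.2, then invoke Proposition~\ref{torusorderfunctionproposition} (using $[L:K]\mid e(C)$) and \cite{HN}, Proposition 8.2.2.2 (using $e(\tilde C)\mid e(C)$). One small point of care: rather than ``iterating'' the result from \cite{HN} for $A$ step by step (which would require the intermediate values $\alpha+j\,e(\tilde C)$ to be prime to $p$, and this can fail when $p\nmid e(\tilde C)$), apply it once with $q'=q\,e(C)/e(\tilde C)$, exactly as the paper does.
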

\begin{proof}
Clearly, the order function is additive in short exact sequences of semiabelian varieties over $K$ which are universally exact over $\Og_K$, so it suffices to treat the Abelian and toric parts separately. 
First note that the sequence
$$0\to \Gm\to \Res_{L/K}\Gm\to T\to 0$$ is universally exact over $\Og_K$ by \cite{CY}, Lemma 11.2. This implies that 
$$\ord_{T}(d)=\ord_{\Res_{L/K}\Gm}(d).$$ From Proposition \ref{torusorderfunctionproposition}, we know that
$$\ord_{T}(\alpha+q\frac{e(C)}{[L:K]}[L:K])=\ord_T(\alpha)+qe(C)c_{tame}(T),$$ and by \cite{HN}, Proposition 8.2.2.2, we know that
$$\ord_{\Pic^0_{\tilde{C}/K}}(\alpha+q\frac{e(C)}{e(\tilde{C})}e(\tilde{C}))=\ord_{\Pic^0_{\tilde{C}/K}}(\alpha)+qe(C)c_{tame}(\Pic^0_{\tilde{C}/K}).$$
These two observations together imply the result. 
\end{proof}
\begin{theorem} Let $C$ be as before. Then the motivic Zeta function $Z_{\Pic^0_{C/K}}(z)$ is a rational function. More precisely, it is contained in the subring \label{motivicrationaltheorem}
$$K_0(\mathrm{Var}_k)\Big[z, \frac{1}{1-\mathbf{L}^ az^b}\Big]_{(a,b)\in \Z\times\Z_{>0}\colon a/b=c_{tame}(\Pic^0_{C/K})} \subseteq K_0(\mathrm{Var}_k)[\![z]\!].$$ The function $Z_{\Pic^0_{C/K}}(\mathbf{L}^{-s})$ has a unique pole at $s=c_{tame}(\Pic^0_{C/K})$, whose order is equal to $t_{tame}(\Pic^0_{\tilde{C}/K})+~1.$
\end{theorem}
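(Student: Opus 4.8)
The plan is to follow the strategy of \cite{HN}, Chapter 8: one decomposes the defining series of $Z_{\Pic^0_{C/K}}(z)$ into arithmetic progressions governed by the stabilization index $e(C)$ and feeds in the two Lemmata proved immediately above, together with the exactness results of Section 6. Write $B:=\Pic^0_{C/K}$ and let $\mathscr{N}(d)$, $\tilde{\mathscr{N}}(d)$, $\mathscr{T}(d)$ denote the N\'eron (lft-)models over $\Og_{K(d)}$ of $B$, $\Pic^0_{\tilde C/K}$ and $T$ after base change to $K(d)$. Since the relevant component groups are torsion (Proposition \ref{Identitycomponentexactproposition}, applied over $\Og_{K(d)}$), each $\mathscr{N}(d)$ is quasi-compact and $\mathscr{N}(d)^{\mathrm{qc}}_k=\mathscr{N}(d)_k$ is a disjoint union of $\#\Phi(\mathscr{N}(d))_{\mathrm{tors}}$ translates of $\mathscr{N}(d)^0_k$, each isomorphic to $\mathscr{N}(d)^0_k$ over the algebraically closed field $k$; hence $[\mathscr{N}(d)^{\mathrm{qc}}_k]=\#\Phi(\mathscr{N}(d))_{\mathrm{tors}}\cdot[\mathscr{N}(d)^0_k]$ in $K_0(\mathrm{Var}_k)$.

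Next I would reorganise the sum. As $L/K$ is purely wild, $[L:K]$ is a power of $p$ and so $p\mid e(C)$; hence every $d$ prime to $p$ can be written uniquely as $d=\alpha+qe(C)$ with $q\ge 0$ and $\alpha\in\{1,\dots,e(C)\}$ prime to $p$, and conversely every such $\alpha+qe(C)$ is automatically prime to $p$. Put $\alpha':=\gcd(\alpha,e(C))=\gcd(\alpha,e(\tilde C))$ and $t_{\alpha'}:=t(\Pic^0_{\tilde C/K}\times_K K(\alpha'))$. The two Lemmata preceding this theorem give, for all $q$,
$$\ord_B(\alpha+qe(C))=\ord_B(\alpha)+qe(C)\,c_{tame}(B),\qquad [\mathscr{N}(\alpha+qe(C))^0_k]=[\mathscr{N}(\alpha')^0_k],$$
$$\#\Phi(\mathscr{N}(\alpha+qe(C)))_{\mathrm{tors}}=\Big(\tfrac{\alpha+qe(C)}{\alpha'}\Big)^{t_{\alpha'}}\#\Phi(\mathscr{N}(\alpha'))_{\mathrm{tors}}.$$
Therefore the progression indexed by $\alpha$ contributes $[\mathscr{N}(\alpha')^0_k]\,\#\Phi(\mathscr{N}(\alpha'))_{\mathrm{tors}}\,\mathbf{L}^{\ord_B(\alpha)}\,z^{\alpha}\sum_{q\ge 0}P_\alpha(q)\,u^q$ to $Z_B(z)$, where $u:=\mathbf{L}^{e(C)c_{tame}(B)}z^{e(C)}$ and $P_\alpha(q):=\big((\alpha+qe(C))/\alpha'\big)^{t_{\alpha'}}$ is an integer-valued polynomial in $q$ of degree exactly $t_{\alpha'}$ with positive leading coefficient.

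I would then verify that $e(C)\,c_{tame}(B)\in\mathbf{Z}$. By Proposition \ref{jumpsunionproposition} applied to the $[L:K]$-universally exact sequence $0\to T\to B\to\Pic^0_{\tilde C/K}\to 0$, to the universally exact sequence $0\to\Gm\to\Res_{L/K}\Gm\to T\to 0$ (\cite{CY}, Lemma 11.2), and to Corollary \ref{inducedjumpscorollary}, one has $c_{tame}(B)=\tfrac{[L:K]-1}{2}+c_{tame}(\Pic^0_{\tilde C/K})$; since $[L:K]\mid e(C)$, $e(\tilde C)\mid e(C)$, and $e(\tilde C)c_{tame}(\Pic^0_{\tilde C/K})\in\mathbf{Z}$ by \cite{HN}, the claim follows. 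Using $\sum_{q\ge 0}\binom{q+j}{j}u^q=(1-u)^{-j-1}$ and the expansion $P_\alpha(q)=\sum_{j=0}^{t_{\alpha'}}c^{(\alpha)}_j\binom{q+j}{j}$ with $c^{(\alpha)}_j\in\mathbf{Z}$ and $c^{(\alpha)}_{t_{\alpha'}}\neq 0$, each progression contributes a $K_0(\mathrm{Var}_k)$-linear combination of the elements $z^{\alpha}(1-u)^{-j-1}$, $0\le j\le t_{\alpha'}$. Since $u=\mathbf{L}^{a}z^{b}$ with $a:=e(C)c_{tame}(B)$, $b:=e(C)$, and $a/b=c_{tame}(B)$, this displays $Z_B(z)$ as a finite sum of elements of the asserted subring, giving rationality and the precise localization.

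Finally, for the pole of $Z_B(\mathbf{L}^{-s})$: after the substitution $z=\mathbf{L}^{-s}$ the only denominators occurring are powers of $1-\mathbf{L}^{e(C)(c_{tame}(B)-s)}$, which vanishes exactly at $s=c_{tame}(B)$; hence this is the unique pole, of order at most $1+\max_\alpha t_{\alpha'}$. Taking $\alpha=d$ shows that $\alpha'$ runs over all divisors $d$ of $e(\tilde C)$ prime to $p$, so $\max_\alpha t_{\alpha'}=\max\{t(\Pic^0_{\tilde C/K}\times_K K(d)):d\mid e(\tilde C),\ p\nmid d\}=t_{tame}(\Pic^0_{\tilde C/K})$ by the theory of the stabilization index in \cite{HN}; moreover $t_{tame}(B)=t_{tame}(\Pic^0_{\tilde C/K})$, since the toric rank of $\mathscr{T}(d)^0_k$ is $0$ for all $d$ prime to $p$ (as noted in the proof of the Lemma preceding this theorem, using Corollary \ref{torusneroncorollary}(i) and the computation of the Example in Section 2), so that Proposition \ref{Identitycomponentexactproposition} gives $t(\mathscr{N}(d)^0_k)=t(\tilde{\mathscr{N}}(d)^0_k)$. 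The remaining point --- that the pole order is \emph{exactly} $t_{tame}(\Pic^0_{\tilde C/K})+1$ --- amounts to showing that the coefficient governing the top-order pole, namely the sum over those $\alpha$ with $t_{\alpha'}$ maximal of $[\mathscr{N}(\alpha')^0_k]\,\#\Phi(\mathscr{N}(\alpha'))_{\mathrm{tors}}\,(e(C)/\alpha')^{t_{tame}}\mathbf{L}^{\ord_B(\alpha)}z^{\alpha}$, does not vanish after the substitution. Each summand is an effective class times a positive integer times a monomial, and distinct $\alpha\in\{1,\dots,e(C)\}$ contribute distinct monomials $z^{\alpha}$; the bookkeeping ruling out cancellation is identical to that in \cite{HN}, Chapter 8, and this non-cancellation step is where the argument requires the most care and is the main obstacle.
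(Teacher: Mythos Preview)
Your proposal is correct and follows precisely the route the paper indicates: the paper's own proof consists of the single sentence ``Using the lemmata preceding this theorem, the proof of Theorem 8.3.1.2 from \cite{HN} can be taken \textit{mutatis mutandis},'' and what you have written is exactly an unpacking of that \textit{mutatis mutandis} argument --- decomposing the series into arithmetic progressions modulo $e(C)$, invoking the two preceding lemmata for the periodicity of $\ord_B$, $[\mathscr{N}(d)^0_k]$, and $\#\Phi(\mathscr{N}(d))_{\mathrm{tors}}$, summing the resulting polynomial-times-geometric series, and reading off the pole from the top-degree contributions. Your handling of the integrality of $e(C)c_{tame}(B)$, the identification $\max_\alpha t_{\alpha'}=t_{tame}(\Pic^0_{\tilde C/K})$, and the acknowledgement that the non-cancellation of the leading term is the delicate point deferred to \cite{HN} are all appropriate.
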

\begin{proof}
Using the lemmata preceding this theorem, the proof of Theorem 8.3.1.2 from \cite{HN} can be taken \it mutatis mutandis. \rm 
\end{proof}\\
\\
\noindent $\mathbf{Remark}.$ The theorem above shows in particular that the order of the pole of $Z_{\Pic^0_{C/K}}(\mathbf{L}^{-s})$ only depends on the Abelian part of $\Pic^0_{C/K}$. This was known previously only in the tamely ramified case (\cite{HN}, Theorem 8.3.1.2) and the case of Jacobians of smooth curves (\it loc. cit.\rm), which have no toric part. See also \cite{HN2}, Theorem 8.6 for the case of tamely ramified Abelian varieties. We also observe that $e(C)$ is the smallest positive integer such that $e(C)j\in \Z$ for all jumps $j$ of $\Pic^0_{C/K}.$
Many of the technical results and results about jumps of semiabelian Jacobians in this paper can be generalized to the case where the finite extension $L/K$ is replaced by a general finite étale algebra $A$ over $K$, or to the case where we allow the curve $C$ to have more than one push-out singularity. However, this leads to no fundamentally new insights, but most proofs would become rather cumbersome were one to make this generalization. However, the method we used to prove rationality of motivic Zeta functions does not seem to be generalizable in this way: The toric part of the Jacobian of a proper $K$-curve $C$ which arises as the push-out of a smooth $K$-curve along the morphism $\Spec A\to \Spec K$ (where $A$ is an étale $K$-algebra) is not anisotropic as soon as $A$ is not a field. This implies that the group of connected components of the Jacobian of such a curve is in general not torsion. Hence we cannot deduce from the exactness of the sequence of component groups the exactness of the sequence of the torsion parts of those component groups. This problem is related to the \it index \rm of semiabelian varieties; see \cite{HN}, Definition 5.2.1.3. It would be very interesting to know whether the index of a semiabelian Jacobian can be calculated in terms of the models of curves with push-out singularities which we constructed.\\
\\
$\mathbf{Acknowledgement}.$ The author would like to express his gratitude to Dr. Johannes Nicaise for suggesting that he think about the question of rationality of jumps for algebraic tori and more general semiabelian varieties, and for sharing many of his ideas. In addition, the author is grateful to Professor Alexei Skorobogatov and Dr. Lars Halvard Halle for reading an earlier version of this manuscript and for valuable suggestions, as well as to Dr. David Holmes for helpful discussions. The author would also like to thank the anonymous referees for making a number of comments which greatly improved the presentation of this paper. This work was supported by the Engineering and Physical Sciences Research Council [EP/L015234/1], and the EPSRC Centre for Doctoral Training in Geometry and Number Theory (London School of Geometry and Number Theory), University College London.


\begin{thebibliography}{10}
\bibitem{AM}
Atiyah, M., Macdonald, I.
\textit{Introduction to Commutative Algebra}. Reading, Massachusetts: Addison-Wesley, 1969.

\bibitem{An}
Anantharaman, S.
\textit{Schémas en groupes, espaces homogènes et espaces algébriques sur une base de dimension 1}. Bull. Soc. Math. France, Mémiore 33, 1973, pp. 5-79.

\bibitem{Ar}
Artin, M. "Algebraization of formal Moduli, I".
\textit{Global Analysis, papers in honor of K. Kodaira}.
Princeton University Press, 1969, pp. 21-71.

\bibitem{BLR}
Bosch, S., L\"utkebohmert, W., Raynaud, M.
\textit{Néron models}.
Ergeb. Math, Grenzgeb. (3), Springer-Verlag, 1990.

\bibitem{Ca}
Cais, B.
\textit{Canonical integral structures on the de Rham cohomology of curves}. 
Ann. Inst. Fourier (Grenoble), Tome 59, no. 6 (2009), pp. 2255-2300.

\bibitem{Chai}
Chai, C. L.
\textit{Néron models for semiabelian varieties: congruences and change of base field}. 
Asian J. Math., 4(4):715-736, 2000.

\bibitem{CLN}
Cluckers, R., Loeser, F., Nicaise, J.
\textit{Chai's conjecture and properties of dimensional motivic integration}. 
Algebra Number Theory 7(4), pp. 893-915, 2013.

\bibitem{CY}
Chai, C.-L., Yu, J.-K.
\textit{Congruences of Néron Models for Tori and the Artin Conductor}
Ann. of Math. (2), Vol. 154, No. 2 (2001), pp. 347-382.

\bibitem{E}
Edixhoven, B.
\textit{Néron models and tame ramification}.
Compos. Math., tome 81, no. 3 (1992), pp.291-306.

\bibitem{Fe}
Ferrand, D.
\textit{Conducteur, Descente et Pincement}.
Bull. Soc. Math. France, 131 (4), 2003, pp. 553-585.

\bibitem{SGA7}
Grothendieck, A., 
\textit{Modèles de Néron et monodomie}. In "Groupes de monodromie en géometrie algébrique, SGA7 I, A. Grothendieck ed., 
Lecutre Notes in Math. 288, Springer-Verlag, 1972, pp. 313-523. 

\bibitem{Ha}
Halle, L. H.
\textit{Galois actions on Néron models of Jacobians}.
Ann. Inst. Fourier (Grenoble) 60(3), pp. 853-903, 2010.

\bibitem{HN}
Halle, L. H., Nicaise, J.
\textit{Néron models and Base Change}.
Lecture Notes in Math. 2165, Springer-Verlag, 2016.

\bibitem{HN2}
Halle, L. H., Nicaise, J.
\textit{Motivic zeta functions of abelian varieties, and the monodromy conjecture}.
 Adv. Math. 227, 2011, pp. 610-653.
 
\bibitem{JM}
Jordan, B. W., Morrison, D. R.
\textit{On the Néron models of Abelian surfaces with quaternionic multiplication}. J. Reine Angew. Math. 447 (1994), pp. 1-22.

\bibitem{Kash}
Kashiwara, M., Schapira, P.
\textit{Categories and Sheaves}.
Grundlehren Math. Wiss., Springer-Verlag, 2006.

\bibitem{K}
Kleiman, S. "The Picard Scheme".
\textit{Fundamental Algebraic Geometry: Grothendieck's FGA explained}.
Math. Surveys Monogr., Vol. 123, 2005.

\bibitem{Liu}
Liu, Q.
\textit{Algebraic Geometry and Arithmetic Curves}.
Oxford University Press, 2002.

\bibitem{LLR}
Liu, Q., Lorenzini, D., Raynaud, M.
\textit{Néron models, Lie algebras, and reduction of curves of genus one}.
Invent. Math. 157, pp. 455-518 (2004).

\bibitem{Looi}
Looijenga, E.
\textit{Motivic measures}.
Séminaire Bourbaki, 52e année, 1999-2000, no. 874, pp. 267-297.

\bibitem{N}
Nicaise, J.
\textit{Motivic invariants of algebraic tori}.
Proc. Amer. Math. Soc. 139(4), 2011, pp. 1163-1174.

\bibitem{NS}
Nicaise, J., Sebag, J. "The Grothendieck ring of varieties."
\textit{Motivic Integration and its Interactions with Model Theory and Non-archimedean Geometry}, ed. by Cluckers, R., Nicaise, J., Sebag, J.
London Math. Soc. Lecture Note Ser., vol. 383 (Cambridge University Press, Cambridge 2011), pp. 145-188.

\bibitem{R}
Raynaud, M.
\textit{Spécialisation du foncteur de Picard}.
Publ. Math. Inst. Hautes Études Sci., Volume 38, Issue 1 (1970), pp. 27.76.

\bibitem{Schw}
Schwede, K. "Gluing schemes and a scheme without closed points".
\textit{Recent progress in Arithmetic and Algebraic Geometry}, ed. by Kachi, Y., Mulay, S.B., Tzermias, P. 
Contemp. Math., Volume 386, 2005.

\end{thebibliography}
\end{document}